\documentclass[11pt]{amsart}
\usepackage[T1]{fontenc}
\usepackage{multirow}
\usepackage{amsmath}
\usepackage{amssymb}
\usepackage{cite,enumerate}
\usepackage{setspace}
\usepackage{xcolor}
\definecolor{proofgray}{gray}{0.45}

\usepackage{amsthm}

\usepackage{graphicx}

\usepackage[colorlinks,
            linkcolor=blue,
            anchorcolor=blue,
            citecolor=blue
            ]{hyperref}

\newcommand*{\Res}{\operatorname{Res}}
\newcommand*{\res}{\operatorname{res}}

\newcommand*{\Id}{\mathrm{Id}}

\newcommand{\dvg}{\operatorname{div}}

\newcommand{\Hess}{\operatorname{Hess}}
\newcommand{\Vol}{\operatorname{Vol}}
\newcommand{\Area}{\operatorname{Area}}

\newcommand{\ord}{\operatorname{ord}}
\newcommand{\Ogroup}[1]{\mathrm O({#1})}
\newcommand{\Proj}{\operatorname{Proj}}
\newcommand{\Ric}{\operatorname{Ric}}
\newcommand{\sn}{\operatorname{sn}}
\newcommand{\cn}{\operatorname{cn}}
\newcommand{\tn}{\operatorname{tn}}
\newcommand{\Rm}{\operatorname{Rm}}
\newcommand{\Spec}{\operatorname{Spec}}
\newcommand{\Span}{\operatorname{Span}}
\newcommand{\Tr}{\operatorname{Tr}}

\renewcommand{\Re}{\operatorname{Re}}

\newtheorem{theorem}{Theorem}[section]
\newtheorem{lemma}[theorem]{Lemma}
\newtheorem{proposition}[theorem]{Proposition}
\newtheorem{corollary}[theorem]{Corollary}
\theoremstyle{definition}

\newtheorem{remark}{Remark}[section]

\numberwithin{equation}{section}

\title[Steklov Spectral Uniqueness via Guillemin-Wodzicki Residues]{Steklov Spectral Uniqueness of Geodesic Balls in 3-Dimensional Space Forms via Guillemin--Wodzicki Residues}

\author{Zuoqin Wang, Hanzhang Yun}
\thanks{Partially supported by NNSFC No. 12571064.}
\address{School of Mathematical Sciences\\
	University of Science and Technology of China\\
	Hefei\\ 230026\\ P.R. China}
\email{wangzuoq@ustc.edu.cn}

\address{Department of Mathematics\\
	Washington University in St. Louis\\  1 Brookings Drive \\
	St. Louis \\ MO   63130 \\ U.S.A.}
\email{hanzhang@wustl.edu}

\begin{document}

	\begin{abstract}		
		Let $\Lambda$ be the Dirichlet-to-Neumann operator on the boundary of a compact three-dimensional Riemannian manifold.  Using the Lee--Uhlmann full-symbol formula and Weyl's invariant theory, we compute  the Guillemin--Wodzicki residues of $\Lambda$ and $\Lambda^2$ explicitly, and hence the first two logarithmic coefficients in the Steklov heat trace. As an application, we prove that geodesic balls in simply connected three-dimensional space
		forms are determined by their Steklov spectra among smooth domains in the same space form. More generally, we obtain a rigidity theorem in the class of compact constant-curvature manifolds with smooth, not necessarily connected, boundary.  In the Euclidean case, we also prove spectral uniqueness for concentric spherical shell regions.
	\end{abstract}

	\maketitle

	\section{Introduction}
	
	Let $(\Omega,g)$ be a compact connected Riemannian manifold of dimension $n$ with nonempty smooth boundary $\Sigma$, and let $\Delta=\Delta_g$ denote the Laplace--Beltrami operator on $\Omega$. Given any smooth function $u\in C^\infty(\Sigma)$, the boundary value problem
	\[\left\{\begin{aligned}
		& \Delta f = 0 && \text{in $\Omega$},\\
		& f = u && \text{ on $\Sigma$}\\
	\end{aligned}\right.\]
	has a unique solution $f=\mathcal Hu\in C^\infty(\Omega)$. The {\em Dirichlet-to-Neumann operator on $\Sigma$} is the operator  $\Lambda(\Omega,g): C^\infty(\Sigma)\to C^\infty(\Sigma)$ defined by  
	\[\Lambda(\Omega,g) u =  {\partial_\nu(\mathcal Hu)} \in C^\infty(\Sigma),\]
	where $\nu$ is the  outward unit normal vector field along $\Sigma$.  Equivalently, the Dirichlet-to-Neumann eigenvalue problem 
	\[
	\Lambda(\Omega,g)u=\mu u
	\]
	is the boundary formulation of the {\em Steklov eigenvalue problem}
	\[
	\begin{cases}
		\Delta_g f=0 & \text{in }\Omega,\\
		\partial_\nu f=\mu f & \text{on }\Sigma.
	\end{cases}
	\]
	Thus the Dirichlet-to-Neumann spectrum and the Steklov spectrum coincide.  Since $\Lambda(\Omega,g)$ is a nonnegative, self-adjoint, elliptic classical pseudodifferential operator of order one on $\Sigma$ \cite{taylorPartialDifferentialEquations2011}, its spectrum is discrete and can be written, with multiplicities, as
	\[
	\Spec(\Lambda(\Omega,g)):
	\qquad
	0=\mu_0(\Omega,g)<\mu_1(\Omega,g)\leq \mu_2(\Omega,g)\leq\cdots\nearrow+\infty.
	\]
	The operator therefore has two complementary geometric interpretations: it is produced by an elliptic boundary value problem in the interior, but it acts as a pseudodifferential operator entirely on the boundary.  This interaction between interior and boundary geometry lies at the heart of the Steklov inverse spectral problem and is one of the main reasons why the problem is both subtle and rich.
	
	\subsection{Inverse spectral results for the Steklov problem}
	
	The inverse spectral problem asks which geometric features of $(\Omega,g)$ are determined by $\Spec(\Lambda(\Omega,g))$.  The leading asymptotics already recover basic boundary data.  Indeed, Weyl's law (cf. \cite[Chap. 29]{hormanderAnalysisLinearPartial2009}) gives
	\begin{equation}
		\label{eq: Weyl law}
		\#\{j:\mu_j\leq \mu\}
		=C_{n-1}\Vol(\Sigma,g)\mu^{n-1}+O(\mu^{n-2}),
		\qquad \mu\to+\infty,
	\end{equation}
	where $C_{n-1}>0$ is universal.  Consequently, the spectrum determines the dimension of $\Sigma$ and its $(n-1)$-dimensional volume.  In dimension two this information already leads to several inverse spectral results.  For a simply connected planar domain, Weinstock's inequality \cite{weinstockInequalitiesClassicalEigenvalue1954} states that
	\[
	\mu_1(\Omega)\mathrm{Length}(\partial\Omega)
	\leq
	\mu_1(\mathbb D)\mathrm{Length}(\partial\mathbb D)
	=2\pi,
	\]
	with equality if and only if $\Omega$ is a disk.  Since the boundary length is audible by \eqref{eq: Weyl law}, the disk is determined by the Steklov spectrum among simply connected planar domains. This result was first obtained by Edward \cite{edwardInverseSpectralResult1993} via the value of the spectral zeta function associated with the DtN spectrum. For Riemannian surfaces, more refined asymptotic results together with a number-theoretic argument in \cite{girouardSteklovSpectrumSurfaces2014} show that the one-dimensional boundary is exceptionally rigid: the Steklov spectrum detects the number of boundary components and their lengths.  In particular, disks are spectrally determined even without imposing the boundary connectedness assumption.
	Very recently, using the spectral zeta function and the zeta-regularized determinant, it was proved in \cite{jinSteklovSpectralGeometry2026} that  each annulus bounded by two concentric circles is uniquely determined by its Steklov spectrum among all planar domains with smooth boundary,  providing the first example of a non-simply connected Euclidean domain that is uniquely determined by its Steklov spectrum.

	The higher-dimensional problem is substantially more difficult.  Weinstock's argument is based on the Riemann mapping theorem, and its higher-dimensional analogues require additional geometric hypotheses \cite{bucurWeinstockInequalityHigher2021}.  Polterovich and Sher \cite{polterovichHeatInvariantsSteklov2015} proved the first general rigidity theorem for a three-dimensional ball: a Euclidean ball is determined by its Steklov spectrum among smooth bounded domains in $\mathbb R^3$ with connected boundary.  Their proof uses the first local coefficients in the heat trace expansion, together with additional information encoded by the multiplicities of the ball spectrum.  Very recently, Speciel \cite{specielSteklovRigidityEuclidean2026} removed the connectedness assumption in the Euclidean setting, and in fact proved a stronger asymptotic rigidity result in every dimension $n\geq 3$. Thus, Euclidean balls are determined by their Steklov spectrum among all smooth bounded Euclidean domains, possibly with disconnected boundary. 
	
	Our first main theorem extends the rigidity phenomenon from Euclidean space to space forms of nonzero curvature in dimension three. Let $\mathbb M_\kappa$ denote the simply connected three-dimensional space form of constant sectional curvature $\kappa$, and let $\mathbb B_\kappa(\rho)\subset\mathbb M_\kappa$ be the
	geodesic ball of radius $\rho>0$. 
	
	\begin{theorem}
		\label{thm: determine 3d geodesic balls}
		Let $\kappa\in\mathbb R$, and let $(\Omega,g)$ be a compact connected three-dimensional Riemannian manifold of constant sectional curvature $\kappa$ with smooth boundary $\Sigma$.  Suppose that
		\[
		\Spec(\Lambda(\Omega,g)) = \Spec(\Lambda(\mathbb B_\kappa(\rho)))
		\]
		for some $\rho>0$.  In the case $\kappa > 0$ we assume  in addition that $2\sqrt{\kappa}\rho\le\pi$. 
		Then $(\Omega,g)$ is isometric to $\mathbb B_\kappa(\rho)$.  
	\end{theorem}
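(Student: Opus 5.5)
The argument extracts boundary curvature integrals from the heat trace. It then applies a Willmore/Minkowski-type inequality in the space form to force the boundary to be a single umbilic sphere, and closes with a rigidity argument in constant curvature.

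\emph{Heat invariants.} The Steklov heat trace $\Tr e^{-t\Lambda}$ on the 2-dimensional boundary $\Sigma$ has an expansion
\[
\sum_{k\ge0} a_k t^{-2+k}+\sum_{l\ge1} b_l t^{l}\log t,
\]
and the log coefficients are, up to universal constants, the Guillemin--Wodzicki residues $\res(\Lambda)$ and $\res(\Lambda^2)$. I take from the paper's computation via the Lee--Uhlmann symbol and Weyl invariant theory that these residues are integrals over $\Sigma$ of universal polynomials. Their arguments are the mean curvature $H$, the second fundamental form $|A|^2$, the ambient scalar and Ricci curvatures restricted to $\Sigma$, and the intrinsic Gauss curvature $K_\Sigma$. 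Since $g$ has constant curvature $\kappa$, the Gauss equation gives $K_\Sigma=\kappa+\det A$. Gauss--Bonnet converts $\int K_\Sigma$ into $2\pi\chi(\Sigma)$, so the audible quantities become $\Area(\Sigma)$ (from Weyl's law), $\int_\Sigma H\,dA$, $\int_\Sigma H^2\,dA$ (or $\int|A|^2$), and $\chi(\Sigma)$, all combined with explicit powers of $\kappa$. Equality of spectra with $\mathbb B_\kappa(\rho)$ therefore forces equality of these combinations. For the ball all of them are explicit: $\Sigma$ is a round sphere with $A=\frac{\cn_\kappa(\rho)}{\sn_\kappa(\rho)}\,\Id$.

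\emph{Rigidity inequality.} Write $\Sigma=\bigcup_i\Sigma_i$. The aim is a Willmore-type inequality in $\mathbb M_\kappa$, or in the constant-curvature manifold $\Omega$, which is locally isometric to it: for each closed component,
\[
\int_{\Sigma_i}(H^2/4+\kappa)\,dA\ge 4\pi,
\]
with equality exactly for umbilic, i.e.\ totally umbilic round, spheres. Together with $\sum_i \chi(\Sigma_i)\le 2\,\#\{\text{components}\}$, this pins things down. Combining the audible identities, ideally a linear combination of $\res(\Lambda^2)$, $\int|A|^2$, and $\chi$, I would isolate the traceless part $\int_\Sigma |A-\tfrac H2\Id|^2\,dA$ plus a nonnegative topological defect. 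For the ball this combination is zero, so it must vanish for $\Omega$. Hence $\Sigma$ is totally umbilic, every component is a sphere, and the Euler characteristic count fixes the number of components.

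\emph{Reconstruction.} A totally umbilic closed surface in a constant-curvature 3-manifold has constant principal curvature $\lambda$ on each component, by the Codazzi equation. Each component is therefore locally a geodesic sphere, and $\Omega$ near it is isometric to a collar of a geodesic sphere or its complement. Matching $\Area$ and $\int H$ with the ball's values determines $\lambda$ and the area, hence the radius $\rho$. Connectedness of $\Omega$ and developing the metric inward along normal geodesics then show that $\Omega$ is a geodesic ball of radius $\rho$. The focal point is reached at distance $\rho$ exactly when the boundary is mean-convex with $\lambda=\cn_\kappa/\sn_\kappa$. The hypothesis $2\sqrt\kappa\rho\le\pi$ is used to exclude the complementary region (a larger cap) in $\mathbb S^3$ and the case of a mean-concave sphere. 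At such radii the ball has $H\ge0$, and the sign of the audible $\int H$ distinguishes the two.

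\emph{Main obstacle.} The hard step is the middle one: finding a combination of the two residues that equals a manifestly nonnegative quantity. Multiple components, each with the opposite orientation or sign of $H$, and the $\kappa>0$ case where the Willmore-type bound can fail for large spheres are the places where positivity may break. There I would first try to rewrite everything with $\int|\mathring A|^2$ and $2\pi\chi$ only, eliminating $\int H^2$. I would use the radius restriction to control the sign of the remaining $\kappa\cdot\Area$ terms.
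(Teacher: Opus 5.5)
\textbf{There is a genuine gap.} It sits at the step you flag as the main obstacle, and the tool you propose for it cannot work.

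\emph{What the residues contain.} They do not reduce to $\Area(\Sigma)$, $\int_\Sigma H$, $\int_\Sigma H^2$ and $\chi(\Sigma)$. In constant curvature they contribute, beyond $a_0,a_1,a_2$, the cubic integral $\int_\Sigma H_1(H_1^2-H_2)$. They also contribute, up to multiples of $\chi(\Sigma)$, the quartic integral $\int_\Sigma Q$ with $Q=(2H_1^2-H_2)(H_1^2-H_2)+|\nabla^\top H_1|^2\ge 0$. These are indispensable. For anything isospectral to $\mathbb B_\kappa(\rho)$, the quadratic invariants only give $\int_\Sigma|A-\tfrac H2\Id|^2=2\int_\Sigma(H_1^2-H_2)=\frac{16\pi}{3}(2-\chi(\Sigma))$. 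So the whole difficulty is to rule out $\chi(\Sigma)<2$.

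\emph{The Willmore bound fails.} The per-component bound $\int_{\Sigma_i}(H_1^2+\kappa)\ge 4\pi$ is false in the theorem's generality, since $\Omega$ is an abstract constant-curvature manifold. For example, flat $T^2\times[0,1]$ has totally geodesic boundary tori with energy $0$. Even where it holds, it only gives $4\pi N\le 4\pi+\frac{2\pi}{3}(2-\chi(\Sigma))$ for $N$ components, which a connected $\Sigma$ of any genus satisfies.

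\emph{What the paper does instead.} It uses a nonlinear Cauchy--Schwarz argument. Since $(2H_1^2-H_2)(H_1^2-H_2)\ge(H_1^2-H_2)^2$, for all $t\in\mathbb R$
\[
\frac{1}{\Area(\Sigma)}\Bigl(\int_\Sigma(H_1^2-H_2)+t\int_\Sigma H_1\Bigr)^2\le\int_\Sigma(H_1^2-H_2+tH_1)^2\le\int_\Sigma Q+2t\int_\Sigma H_1(H_1^2-H_2)+t^2\int_\Sigma H_1^2 .
\]
By Gauss--Bonnet, every term here is spectral up to $\chi(\Sigma)$. Inserting the ball's data, nonnegativity in $t$ becomes $\chi(\Sigma)\le 2$ together with $-(2-\chi(\Sigma))^2\bigl(4(2-\chi(\Sigma))+15+9\cn_\kappa^2(\rho)\bigr)\ge 0$. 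Hence $\chi(\Sigma)=2$. The equality case then gives umbilicity with $H_1\equiv 1/\tn_\kappa(\rho)$ on all components simultaneously. You also need this last fact, since your radius-matching assumes a single umbilic constant.

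\emph{When a linear argument suffices.} For $\kappa\ge 0$ a linear argument of the kind you hope for does work, because the ball's data give $\int_\Sigma Q=\frac{20\pi\kappa}{3}(\chi(\Sigma)-2)\le 0$. For $\kappa<0$ this identity is consistent with $\chi(\Sigma)<2$, and it is the cross term in $t$, involving $\int H_1(H_1^2-H_2)$, that closes the argument.

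\textbf{A second gap: one sphere versus two $\mathbb{RP}^2$'s.} Umbilicity, $\chi(\Sigma)=2$ and $H_1\equiv 1/\tn_\kappa(\rho)$ do not force a single sphere. Two copies of $\partial\mathbb B_\kappa(\rho)/\pm\cong\mathbb{RP}^2$ have the same total area, the same pointwise curvature data and total Euler characteristic $2$. Hence they have exactly the same local heat invariants, and they can bound a non-orientable $\Omega$, which the theorem allows. So your claim that ``the Euler characteristic count fixes the number of components'' is false, and no local invariant can repair it.

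The paper excludes this case separately:
\begin{itemize}
\item For $\kappa\le 0$, it passes to the orientation double cover. The gluing and universal-cover argument then forces the boundary of the cover to be connected, a contradiction.
\item For $\kappa>0$, it shows $\Omega$ would be a warped product $\mathbb{RP}^2\times I$. It then compares multiplicities of the first nonzero Steklov eigenvalue: $3$ for the ball, versus $1$, $5$ or $6$ for the warped product.
\end{itemize}

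\textbf{The developing step.} Your final developing step is in the right spirit. The paper makes it rigorous by gluing $\mathbb M_\kappa\setminus B$ onto the collar to get a complete space form and passing to its universal cover. There it uses $2\sqrt\kappa\rho\le\pi$ to make the lifted balls disjoint.
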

	
	No connectedness assumption is imposed on $\Sigma$. When $\kappa=0$ and $\Omega$ is a Euclidean domain, the corresponding rigidity statement is also contained in the recent result of Speciel \cite{specielSteklovRigidityEuclidean2026}. Theorem \ref{thm: determine 3d geodesic balls} extends this rigidity phenomenon to three-dimensional space forms of arbitrary constant sectional curvature and, more generally, is formulated for compact constant-curvature manifolds with boundary rather than only for domains in the simply connected model space.

	For domains in a simply connected space form, we are able to remove this radius assumption in the case $\kappa>0$, and thus prove the following consequence. 
	
	\begin{corollary}
		\label{cor: determine 3d geodesic balls in orientable space forms}
		Let $\kappa\in\mathbb R$, and let $\Omega\subset \mathbb M_\kappa$ be a bounded domain with smooth boundary.  If
		\[
		\Spec(\Lambda(\Omega)) = \Spec(\Lambda(\mathbb B_\kappa(\rho))),
		\]
		then $\Omega$ is isometric to $\mathbb B_\kappa(\rho)$.
	\end{corollary}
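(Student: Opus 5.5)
Corollary \ref{cor: determine 3d geodesic balls in orientable space forms} follows from Theorem \ref{thm: determine 3d geodesic balls} whenever $\kappa\le 0$, since a bounded domain $\Omega\subset\mathbb M_\kappa$ with smooth boundary is a compact connected three-dimensional manifold of constant curvature $\kappa$ with smooth boundary. The only new case is $\kappa>0$ with $2\sqrt\kappa\rho>\pi$. After rescaling, take $\kappa=1$, so $\mathbb M_1=S^3$ and $\rho\in(\pi/2,\pi)$. Note that $\rho=\pi$ is excluded because $\mathbb B_1(\rho)$ must have nonempty boundary.

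The plan is to use the complement. Since $\Omega\subset S^3$ has smooth boundary, $\Omega^c:=\overline{S^3\setminus\Omega}$ is a compact smooth domain in $S^3$ with the same boundary $\Sigma$. The complement of $\mathbb B_1(\rho)$ is $\mathbb B_1(\pi-\rho)$, and $2(\pi-\rho)<\pi$. The boundary invariants extracted from the spectrum in the proof of Theorem \ref{thm: determine 3d geodesic balls} are the area of $\Sigma$ from \eqref{eq: Weyl law}, and the two residues $\Res\Lambda$ and $\Res\Lambda^2$ computed in the body of the paper. These are integrals over $\Sigma$ of local expressions in the second fundamental form $h$ and the ambient curvature. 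For a constant-curvature ambient space in dimension three, I expect them to be of the form
\[
\int_\Sigma H\,dA,\qquad \int_\Sigma (a H^2+b|h|^2+c\kappa)\,dA
\]
for explicit constants $a,b,c$.

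Passing to the complement reverses the normal, so $h\mapsto -h$ and $H\mapsto -H$. Quadratic terms are preserved, and the mean-curvature integral changes sign. Therefore the invariants of $\Omega^c$ are determined by those of $\Omega$, which in turn equal those of $\mathbb B_1(\rho)$. Transforming these, the invariants of $\Omega^c$ coincide with those of $\mathbb B_1(\rho)^c=\mathbb B_1(\pi-\rho)$. Note that $\Omega^c$ need not be connected; in that case it is a disjoint union of finitely many components, and the invariants are additive over components.

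Next I would rerun the rigidity argument of Theorem \ref{thm: determine 3d geodesic balls} for $\Omega^c$, or for its components, with the invariants of $\mathbb B_1(\pi-\rho)$ in place of the spectrum. I assume that argument uses the spectrum only through these boundary invariants, combined with an integral inequality. I expect that inequality to be Willmore-type, $\int_\Sigma (H^2/4+\kappa)\,dA\ge 4\pi$ per component in $S^3$, or Minkowski-type via the $\int H$ term. The radius hypothesis $2\sqrt\kappa\rho\le\pi$ presumably enters to guarantee convexity or nonnegative mean curvature, so that the relevant inequality applies with the right sign. Since $\pi-\rho<\pi/2$, this hypothesis holds for the complement ball. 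Equality in the inequality should force $\Sigma$ to be a single totally umbilic sphere of the correct area, which bounds the ball of radius $\pi-\rho$ on the side of $\Omega^c$. Then $\Omega$ is the other side of that sphere, a geodesic ball of radius $\rho$.

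The main obstacle is checking that the proof of Theorem \ref{thm: determine 3d geodesic balls} really depends only on boundary quantities that transform correctly under the normal flip. In particular, the equality case must not use connectedness of $\Omega$ or information, such as multiplicities, that is unavailable for $\Omega^c$. If the argument uses the interior, I would instead apply the equality case directly on $\Sigma$, as a closed surface in $S^3$, with the orientation chosen so that the sign of the $\int H$ term is favorable.
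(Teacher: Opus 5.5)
Your reduction to the case $\kappa>0$, $2\sqrt\kappa\rho>\pi$ is correct. So is the parity observation: every invariant involved ($a_0,a_1,a_2$ and the two residues, which are cubic and quartic in $h$, not linear and quadratic) is odd or even under $h\mapsto-h$. Hence $\Sigma$, regarded as $\partial\Omega^c$, carries the boundary data of $\mathbb B_1(\pi-\rho)$.

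The gap is the step you expect from the equality case, namely that $\Sigma$ is a \emph{single} totally umbilic sphere. No inequality among these invariants can give that. Each one is the integral of a local density plus a multiple of $\chi(\Sigma)$, so they take identical values on $\partial\mathbb B_\kappa(\rho)$ and on two copies of $\partial\mathbb B_\kappa(\rho)/\pm$ with the same principal curvature. This is why the paper's boundary argument, Lemma~\ref{lem: spectrum controls chi sphere}, ends in exactly this dichotomy. It is also why the proof of Theorem~\ref{thm: determine 3d geodesic balls} for $\kappa>0$ has to invoke multiplicities (Lemma~\ref{lem: nonisopectral}), which is the information you rightly note is unavailable for $\Omega^c$. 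The missing idea is ambient and elementary: the boundary of a domain in the orientable manifold $\mathbb M_\kappa$ is orientable, so the $\mathbb RP^2$ alternative cannot occur.

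Your guessed mechanism is also not the one that works. For $\kappa=1$, $a_0$ and $a_2$ only give $\int_\Sigma(H_1^2+1)=4\pi+\frac{2\pi}{3}(2-\chi(\Sigma))$. For a connected boundary of genus $g$ this equals $4\pi+\frac{4\pi g}{3}$, which grows with $g$, so no Willmore-type lower bound can exclude high genus. The paper instead uses $\tilde b_1,\tilde b_2$ and a Cauchy--Schwarz argument applied to $H_1^2-H_2+tH_1$ (Lemma~\ref{lem: spectrum controls chi}) to force $\chi(\Sigma)=2$ and umbilicity.

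You have also misplaced the radius hypothesis, and that is what sends you to the complement. The Cauchy--Schwarz argument is valid for every $\rho$. The condition $2\sqrt\kappa\rho\le\pi$ is needed for the interior covering argument of Proposition~\ref{prop:iso-to-geod-ball}, through the disjointness of lifted balls in Lemma~\ref{lem: disjoint balls}. For a domain in $\mathbb M_\kappa$ that argument is unnecessary, and the paper proceeds directly:
\begin{enumerate}
\item Lemma~\ref{lem: spectrum controls chi sphere} makes $\Sigma$ totally umbilic with principal curvature $1/\tn_\kappa(\rho)$ with respect to the inward normal, and $\chi(\Sigma)=2$.
\item Orientability leaves a single sphere.
\item The rigidity theorem for totally umbilic hypersurfaces in space forms makes it a geodesic sphere of radius $\rho$.
\item The sign of $1/\tn_\kappa(\rho)$, negative when $2\sqrt\kappa\rho>\pi$, shows that $\Omega$ is the ball of radius $\rho$ rather than its complement.
\end{enumerate}
Once these ingredients are supplied, your complement picture is harmless: with $\Sigma$ connected, $\Omega^c$ is connected and Proposition~\ref{prop:iso-to-geod-ball} applies to it with radius $\pi-\rho$. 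But it is a detour, since the sign of the mean curvature already decides which side $\Omega$ lies on.
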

	
	We also study the Steklov inverse spectral problem for  concentric spherical shell regions. Fix radii $0<r<R$ and denote the concentric spherical shell by
	\[
	A_{r,R}:=B_R(0)\setminus \overline{B_r(0)}.
	\]
	
	\begin{theorem}\label{thm:concentric_spherical_shell_inverse}
		Let $\Omega\subset \mathbb R^3$ be a bounded domain with smooth boundary.  If
		 \[
		 \Spec(\Lambda(\Omega)) = \Spec(\Lambda(A_{r,R})),
		 \]
		 then $\Omega$ is isometric to $A_{r,R}$.
	\end{theorem}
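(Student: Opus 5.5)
We will extract from $\Spec(\Lambda(\Omega))$ enough integral invariants of $\Sigma=\partial\Omega$ to pin down the shell. The Steklov heat trace $\Tr e^{-t\Lambda}$ has an expansion in powers of $t$ together with logarithmic terms $t^k\log t$. The coefficients of the logarithmic terms are (up to universal constants) Guillemin--Wodzicki residues of powers of $\Lambda$. The paper computes the residues of $\Lambda$ and $\Lambda^2$ via the Lee--Uhlmann symbol. For a Euclidean domain in $\mathbb R^3$ the ambient curvature vanishes. So, writing $H$ for the mean curvature, $K$ for the Gauss curvature and $|A|^2=4H^2-2K$, these residues become local integrals over $\Sigma$ of polynomials in the second fundamental form and its derivatives. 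Weyl's law \eqref{eq: Weyl law} gives $\Area(\Sigma)$. The residue of $\Lambda$ (a $t^2\log t$ coefficient, or equivalently a zeta residue) should be a multiple of $\int_\Sigma H\,dA$ (or $\int_\Sigma (H^2-K)$-type terms, whichever the formula gives). The residue of $\Lambda^2$ should give a combination of $\int_\Sigma H^3$, $\int_\Sigma HK$ and a derivative term like $\int_\Sigma |\nabla H|^2$ (or $\int|\nabla A|^2$).

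Let $\Sigma=\Sigma_1\sqcup\dots\sqcup\Sigma_m$ with $\Sigma_1$ the outer component. The steps, in order, are as follows.

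\textbf{Step 1.} Count the components. The multiplicity of the eigenvalue $0$ equals the number of components of $\Omega$, so $\Omega$ is connected. The number $m$ of boundary components is not obviously audible. We instead use Gauss--Bonnet, $\int_\Sigma K=4\pi\sum_i(1-g_i)$, which enters through whichever audible combination contains $\int K$. I would first check whether the constant term of the heat expansion, namely $\zeta_\Lambda(0)$, contains $\int_\Sigma(H^2-K)$ or $\chi(\Sigma)$ separately; in dimension three it does, and it is a multiple of $\int(H^2+cK)$.

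\textbf{Step 2.} Obtain inequalities. Orient the unit normal outward from $\Omega$; on inner components it then points toward the hole, so $H$ is negative there. For the outer component, Minkowski's inequality for the relevant class, or Willmore's inequality $\int_{\Sigma_i}H^2\ge 4\pi$ together with $\int H^2\,\ge\,(\int|H|)^2/\Area$ (Cauchy--Schwarz), bounds the audible invariants. The shell achieves these bounds with equality: its spectrum gives $\Area=4\pi(r^2+R^2)$, $\int H=4\pi(R-r)$ and $\int H^2=8\pi$. We then need a family of inequalities that is saturated only by round spheres. Willmore gives $\int_\Sigma H^2\ge 4\pi m\ge 8\pi$ as soon as $m\ge2$, with equality only for spheres. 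The components $m\ge2$ must be handled, and $m=1$ is also possible a priori. So the equality $\int H^2=8\pi$ (audible via Step 1, once $\chi$ is separated) forces either exactly two round spheres, or one component with $\int H^2= 8\pi$. The second case must be excluded using the $\Lambda^2$ residue, which is the cubic invariant. I expect this to be the main obstacle. My first attempt: the cubic residue and the vanishing of $\int K$ constraints would contradict $\chi$; indeed Step 1 gives $\chi(\Sigma)=4$ directly if $\int K$ is audible separately, which forces $m\ge2$ sphere components. Then Willmore equality forces exactly two round spheres.

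\textbf{Step 3.} Fix the radii and centres. With two round spheres of radii $a>b$ bounding $\Omega$, necessarily nested, Weyl gives $a^2+b^2=r^2+R^2$. The integral $\int H=4\pi(a-b)$ (signed, from the $\Lambda$ residue) gives $a-b=R-r$, hence $a=R$ and $b=r$. Finally we show the spheres are concentric. Non-concentric shells $B_R(0)\setminus\overline{B_r(p)}$ have the same local invariants, so a nonlocal argument is needed. I would use the first nonzero Steklov eigenvalue or the full spectrum comparison: separation of variables gives explicit eigenvalues of the concentric shell with multiplicity $2\ell+1$. For an off-centre shell the symmetry group drops to $O(2)$, which breaks these multiplicities. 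The plan is to show the degeneracy of the lowest $\ell=1$ cluster splits by perturbation, or to apply a Hersch-type trial-function inequality showing $\mu_1$ is uniquely maximized by the concentric configuration. This concentricity step is a second potential obstacle; I would try the variational inequality with coordinate functions centred appropriately first.
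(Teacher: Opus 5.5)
There is a genuine gap at the core of Steps 1--2. You never separate $\chi(\Sigma)$ from $\int_\Sigma H_1^2$, yet your Willmore argument needs both. The power-type coefficients give only three things:
\begin{itemize}
\item $\Area(\Sigma)$;
\item $\int_\Sigma H_1$, which comes from $a_1$, not from a residue of $\Lambda$;
\item the single combination $\int_\Sigma H_1^2+\frac{2\pi}{3}\chi(\Sigma)$ from $a_2$, which equals $\frac{32\pi}{3}$ for the shell.
\end{itemize}
So ``$\int H^2=8\pi$, audible once $\chi$ is separated'' is circular. Willmore-type bounds cannot break the circle. A connected boundary of genus $g$ would only need $\int_\Sigma H_1^2=\frac{28\pi}{3}+\frac{4\pi g}{3}>8\pi$. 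That is compatible with $\int H_1^2\ge 4\pi$, and also with the optimal genus-$g$ Willmore bounds, which all lie below $8\pi$. Nothing in your inequalities excludes it, and Minkowski-type bounds fail for nonconvex boundaries.

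Your guessed shape of the residues is also inconsistent with scaling. $\Res(\Lambda^k)$ scales like $(\text{length})^{-k}$. Hence $\Res(\Lambda)$ is cubic and $\Res(\Lambda^2)$ is quartic in the second fundamental form and its derivatives, so $\int H^3$ cannot appear alongside $\int|\nabla H|^2$.

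The missing idea is the sign structure of the $\Lambda^2$-residue. For $\kappa=0$, Corollary~\ref{cor: b_1 b_2 constant curvature} gives
\[
\Res(\Lambda^2)=\frac{\pi}{8}\int_\Sigma\Bigl((2H_1^2-H_2)(H_1^2-H_2)+|\nabla^\top H_1|^2\Bigr).
\]
Since $H_1^2-H_2\ge0$ and $2H_1^2-H_2=H_1^2+(H_1^2-H_2)\ge0$, the integrand is pointwise nonnegative. Its vanishing forces $H_1^2=H_2$, i.e.\ umbilicity. For the shell the integrand is identically zero, so isospectrality forces it to vanish on $\partial\Omega$. Hence $\partial\Omega$ is totally umbilical, so it is a union of round spheres. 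Then $\int_\Sigma H_1^2=\int_\Sigma H_2=2\pi\chi(\Sigma)$, and $a_2$ forces $\chi(\Sigma)=4$. This gives exactly two spheres, necessarily nested, with no Willmore inequality needed.

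From there your Step 3 agrees with the paper: area and $\int H_1$ give the radii $R$ and $r$. For concentricity, the paper simply invokes Ftouhi's theorem that $\mu_1(B_R\setminus\overline{B_r(p)})$ is uniquely maximized at the concentric position. That is the global statement your trial-function idea aims at. The multiplicity-splitting perturbation you mention would only handle small displacements.
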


	\subsection{New Steklov spectral invariants}
	
	As in \cite{polterovichHeatInvariantsSteklov2015}, we shall use heat invariants for the Dirichlet-to-Neumann operator to prove the inverse results alluded to above. The key new ingredient is the use of logarithmic heat invariants, which can be computed via the Guillemin--Wodzicki noncommutative residues  $\Res(\Lambda(\Omega,g))$ and  $\Res(\Lambda(\Omega,g)^2)$. We first state our results and postpone further explanations, especially the relation between the heat invariants and the Guillemin--Wodzicki noncommutative residues, to Section~\ref{sec: preliminaries}.
	
	We write $R$ and $R^\top$ for the scalar curvatures of $\Omega$ and $\Sigma$, respectively, and $h$ for the second fundamental form.  For tangent vectors
	$X,Y\in T\Sigma$, set
	\[
	\Rm^\perp(X,Y):=\Rm(X,\nu,Y,\nu), \qquad
	\Ric^\perp:=\Ric(\nu,\nu).
	\]
	The operators $\nabla^\top$, $\operatorname{Hess}^\top$, $\operatorname{div}^\top$, and $\Delta^\top$ are computed with respect to the induced metric on $\Sigma$, with $\Delta^\top$ taken to be negatively definite.  Normal derivatives are taken in boundary normal coordinates. Moreover, we write   
	\[ h^2(X,Y)=\sum_\alpha h(e_\alpha,X)h(e_\alpha,Y),\] where $\{e_\alpha\}$ is any local orthonormal frame  on $\Sigma$. We will prove 
	
	\begin{theorem}
		\label{thm: main thm b1 b2}
		Let $(\Omega,g)$ be a compact three-dimensional Riemannian manifold with smooth boundary $\Sigma$. For simplicity we write $\Lambda=\Lambda(\Omega, g)$. Then
		\[
		\Res(\Lambda(\Omega,g)^k)=(-1)^{k+1}\int_\Sigma \widehat b_k\,|d\sigma_g|,
		\qquad k=1,2,
		\]
		where
		\begin{align*}
			\widehat b_1
			= &\, \frac{\pi}{8} H_1^3 + \frac{3\pi}{16}H_1R^\top - \frac{\pi}{2}H_1H_2 + \frac{\pi}{16}\Delta^\top H_1 \\
			&\, + \frac{\pi}{8}\langle h, \Rm^\bot\rangle + \frac{\pi}{16}H_1\Ric^\bot + \frac{\pi}{16}\partial_{x^n}\Ric^\bot - \frac{3\pi}{32}\partial_{x^n}R, 
		\end{align*}
		and
		\begin{align*}
			\begin{aligned}
				\widehat b_2 = &\ -\frac{\pi}{4} H_1^4 - \frac{3\pi}{8} H_1^2 H_2 + \frac{3\pi}{8} H_1^2 R^\top + \frac{3\pi}{16} H_1^2\Ric^\bot + \frac{3\pi}{16} H_2 R^\top \\
				&\ - \frac{3\pi}{32}\Ric^\bot R^\top - \frac{\pi}{2} H_2^2 + \frac{5\pi}{16} H_2\Ric^\bot - \frac{\pi}{32}(\Ric^\bot)^2 + \frac{3\pi}{8} H_1\langle h,\Rm^\bot\rangle\\
				&\ + \frac{9\pi}{16} H_1\partial_n\Ric^\bot - \frac{9\pi}{16} H_1\partial_n R - \frac{\pi}{8}\langle h,\Hess^\top H_1\rangle - \frac{\pi}{8}|\nabla^\top H_1|^2\\
				&\ - \frac{\pi}{8}\langle\dvg^\top h,\nabla^\top H_1\rangle + \frac{\pi}{16}\Delta^\top H_2 - \frac{\pi}{32}\Delta^\top\Ric^\bot - \frac{\pi}{8}\langle h^2,\Rm^\bot\rangle\\
				&\ - \frac{\pi}{8}\langle\Rm^\bot,\Rm^\bot\rangle - \frac{\pi}{8}\langle h,\Rm^\bot_{;n}\rangle - \frac{\pi}{16}\partial_n^2\Ric^\bot + \frac{3\pi}{32}\partial_n^2 R.
			\end{aligned}
		\end{align*} 
	\end{theorem}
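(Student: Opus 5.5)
The computation starts from the definition of the Guillemin--Wodzicki residue on the two-dimensional boundary $\Sigma$:
\[
\Res(P)=\frac{1}{(2\pi)^{2}}\int_\Sigma\int_{|\xi|=1} p_{-2}(x,\xi)\,dS(\xi)\,dx,
\]
where $p_{-2}$ is the homogeneous component of order $-\dim\Sigma=-2$ of the full symbol of $P$. For $P=\Lambda$ this is the component $\lambda_{-2}$, three orders below the principal symbol $\lambda_1=|\xi|_{g^\top}$. For $P=\Lambda^2$ it is the order $-2$ part of $\lambda\circ\lambda=\sum_\alpha \frac{1}{\alpha!}\partial_\xi^\alpha\lambda\, D_x^\alpha\lambda$, which involves $\lambda_1,\lambda_0,\lambda_{-1},\lambda_{-2},\lambda_{-3}$ and their $x$- and $\xi$-derivatives.

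\textbf{Symbols via Lee--Uhlmann.} I would use boundary normal coordinates $(x',x^n)$, in which $g=g_{\alpha\beta}(x',x^n)dx^\alpha dx^\beta+(dx^n)^2$. Factoring $-\Delta$ gives the Lee--Uhlmann Riccati-type recursion for the symbol $a(x,\xi')$ of the operator with $\partial_{x^n}u=\dots$:
\begin{itemize}
\item $a_1=-|\xi'|$;
\item $a_{m-1}=\frac{1}{2|\xi'|}\Bigl(\sum_{\substack{j+k-|K|=m\\ m\le j,k\le1}}\frac{1}{K!}\partial_{\xi'}^K a_j\,D_{x'}^K a_k+\partial_{x^n}a_m-E\,a_m\Bigr)$, where $E$ collects the first-order terms $-\tfrac12\partial_{x^n}\log\det g$ and the $\Gamma$-terms.
\end{itemize}
Then $\Lambda$ has full symbol $\lambda_j=-a_j|_{x^n=0}$, up to sign conventions. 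Iterating gives $a_0,a_{-1},a_{-2},a_{-3}$ as polynomials in $\xi'$, divided by powers of $|\xi'|$, with coefficients built from $\partial^k_{x^n}g_{\alpha\beta}$ and tangential derivatives of $g$ at $x^n=0$.

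\textbf{Simplifications.} Since the residue density is invariant, I would evaluate it at a point where the coordinates $x'$ are Riemann normal for $g^\top$. At that point $g_{\alpha\beta}=\delta$ and $\partial_{x'}g_{\alpha\beta}=0$. The remaining data are then expressed through the identities
\[
\partial_n g_{\alpha\beta}=-2h_{\alpha\beta},\qquad
\partial_n^2 g_{\alpha\beta}=2h^2_{\alpha\beta}-2\Rm^\perp_{\alpha\beta},
\]
with the third normal derivative given in terms of $h^3$, $h\Rm^\perp$ and $\Rm^\perp_{;n}$. Second tangential derivatives become $R^\top$. I would apply the Gauss and Codazzi equations to rewrite $\Rm^\perp$-traces as $\Ric^\perp$ and $R$, and $\partial_n R$ and $\partial_n\Ric^\perp$ as the given combinations.

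The angular integration over $|\xi'|=1$ in $\mathbb R^2$ only needs the moments $\int\xi_{i_1}\cdots\xi_{i_{2m}}\,dS$. These are symmetrized products of Kronecker deltas times $2\pi\,(2m-1)!!/(2^m m!)$, which explains the factors of $\pi$ in the statement.

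\textbf{Invariant form.} Weyl's invariant theory says the result must be a linear combination of the listed $O(2)$-invariants of the appropriate weight: weight 3 for $\widehat b_1$ and weight 4 for $\widehat b_2$. Examples are $H_1^3$, $H_1H_2$, $H_1R^\top$, $\Delta^\top H_1$, $\langle h,\Rm^\perp\rangle$, and so on. So it suffices to determine the coefficients. Two consistency checks fix several of them:
\begin{itemize}
\item comparison with the ball $\mathbb B_\kappa(\rho)$, whose spectrum is explicit;
\item the known Polterovich--Sher Euclidean coefficients.
\end{itemize}
Divergence terms such as $\Delta^\top H_1$ integrate to zero but are kept in the density. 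The sign $(-1)^{k+1}$ comes from the convention $\lambda=-a$.

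\textbf{Main obstacle.} The hardest step is $\Res(\Lambda^2)$. It needs $\lambda_{-3}$, which is four recursion steps deep, together with every cross term of the composition formula up to second $x'$-derivatives. In Riemann normal coordinates these still produce $R^\top$-terms, through $\partial^2_{x'}g_{\alpha\beta}$, and tangential derivatives of $h$, through $\partial_{x'}\partial_n g$. The third normal derivatives must also be rewritten correctly.

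I would organize this by homogeneity and by type of invariant. The plan is to compute the coefficient of each basis invariant separately, using model metrics that isolate it:
\begin{itemize}
\item the warped product $dr^2+f(r)^2g_{S^2}$ for the purely radial invariants;
\item perturbations $g_{\alpha\beta}=\delta+x^n A+\dots$ with chosen tensors $A$ for the others.
\end{itemize}
The full general computation would then be carried out symbolically and checked against these models.
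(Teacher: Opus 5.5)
For $k=1$ your plan is essentially the paper's. Both use the Lee--Uhlmann recursion in boundary normal coordinates, evaluate at a point where the tangential coordinates are normal, and apply Weyl's theorem to reduce the density to the eight invariants $H_1^3$, $H_1R^\top$, $H_1H_2$, $\Delta^\top H_1$, $\langle h,\Rm^\bot\rangle$, $H_1\Ric^\bot$, $\partial_n\Ric^\bot$, $\partial_nR$. The coefficients are then fixed on model metrics. The paper uses five models, chosen so the linear system is nonsingular: the Euclidean ball, $u(x^n)^2g_\Sigma+(dx^n)^2$, a one-directional warped flat torus, the conformal jet $e^{L(x^1)^2x^n}((dx^1)^2+(dx^2)^2)+(dx^n)^2$, and a planar domain times a circle. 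Your two families would have to be made that specific. One correction: you put $(2\pi)^{-2}$ into $\Res$, but the constants in the statement use the convention without that factor. With your normalization every coefficient would come out divided by $4\pi^2$.

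The genuine gap is $k=2$. Computing $\sigma_{-2}(\Lambda^2)$ by composition needs $\lambda_{-3}$ and all cross terms. You would then have to fit a weight-four invariant basis with well over twenty elements: the $22$ terms of $\widehat b_2$, plus others such as $(R^\top)^2$ and $\Delta^\top R^\top$ whose coefficients must be shown to vanish.

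Your models cannot do this:
\begin{itemize}
\item The ball and the $S^2$-warped product are umbilic, with $\Rm^\bot$ proportional to the metric and no tangential variation, so they test only a few combinations.
\item At first order in $A$, every quadratic and quartic invariant vanishes. So perturbations $\delta+x^nA$ cannot separate $H_1^4$, $H_1^2H_2$, $H_2^2$, or $|\nabla^\top H_1|^2$, $\langle\dvg^\top h,\nabla^\top H_1\rangle$, $\langle h,\Hess^\top H_1\rangle$, or $\langle\Rm^\bot,\Rm^\bot\rangle$, $(\Ric^\bot)^2$. Pushing to higher order with $x'$-dependent $A$ amounts to redoing the general computation.
\item Polterovich--Sher computed $a_0,a_1,a_2$, not logarithmic coefficients, so there is nothing to check against.
\end{itemize}

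The missing idea is the factorization identity $A_{x^n}^2-Q+i[D_{x^n},A_{x^n}]-EA_{x^n}=0$, with $Q$ differential. At order $-2$ it gives
\[
\sigma_{-2}(A_{x^n}^2)=E\,\sigma_{-2}(A_{x^n})-\partial_{x^n}\sigma_{-2}(A_{x^n}).
\]
The residue density of $A_t$ on each parallel surface $\{x^n=t\}$ is the same universal expression $\mathsf P_1$, evaluated on the slice geometry. Integrate over $|\xi|=1$ and divide by the area density, using $\partial_{x^n}\log\sqrt{\det g_{x^n}}=-E$ and $E=2H_1$. This gives $\mathsf P_2=4H_1\mathsf P_1-\partial_n\mathsf P_1$, that is, $\widehat b_2=4H_1\widehat b_1-\partial_n\widehat b_1$.

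So $\widehat b_2$ follows from $\widehat b_1$ by differentiating its eight terms along the normal geodesic flow. This uses Riccati-type identities such as $\partial_nH_1=2H_1^2-H_2+\tfrac12\Ric^\bot$ and $\partial_nH_2=2H_1H_2+2H_1\Ric^\bot-\langle h,\Rm^\bot\rangle$, plus analogues for $\langle h,\Rm^\bot\rangle$ and $\Delta^\top H_1$. No $\lambda_{-3}$, no composition and no second round of model computations are needed. This also explains why $\widehat b_2$ contains terms like $\partial_n^2R$ and $\langle h,\Rm^\bot_{;n}\rangle$ but no $(R^\top)^2$.
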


	For the inverse spectral application, the general expressions simplify dramatically in constant sectional curvature.
	
	\begin{corollary}
		\label{cor: b_1 b_2 constant curvature}
		Let $(\Omega,g)$ be a compact three-dimensional Riemannian manifold of constant sectional curvature $\kappa$ with smooth boundary $\Sigma$.  Then
		\begin{equation}
			\label{eq: b1}
			\Res(\Lambda) = \frac{\pi}{8}\int_\Sigma H_1(H_1^2-H_2)\,|d\sigma_g|
			+\frac{3\pi\kappa}{4}\int_\Sigma H_1\,|d\sigma_g|,
		\end{equation}
		and
		\begin{equation}
			\label{eq: b2}
			\begin{aligned}
				\Res(\Lambda^2)={}&
				\frac{\pi}{8}\int_\Sigma
				\left((2H_1^2-H_2)(H_1^2-H_2) + |\nabla^\top H_1|^2\right)\,|d\sigma_g|\\
				&
				-\frac{11\pi\kappa}{8}\int_\Sigma H_1^2\,|d\sigma_g|
				-\frac{7\pi^2\kappa}{4}\chi(\Sigma)
				+\frac{13\pi}{8}\kappa^2\Area(\Sigma,g).
			\end{aligned}
		\end{equation}
	\end{corollary}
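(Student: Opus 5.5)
The plan is to specialize Theorem~\ref{thm: main thm b1 b2} pointwise to constant sectional curvature $\kappa$, and then integrate over $\Sigma$. Integration lets us discard divergence terms, use the Codazzi equation, and apply Gauss--Bonnet. First I fix the conventions exactly as they are used in Theorem~\ref{thm: main thm b1 b2}: the definitions of $H_1$, $H_2$ in terms of the principal curvatures $k_1,k_2$, and the sign convention for $\Rm$. I would check these against the ball $\mathbb B_\kappa(\rho)$ if necessary.

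Second, I record the constant-curvature identities, assuming the convention in which sectional curvatures are positive on the round sphere:
\[
\Rm^\perp(X,Y)=\kappa\, g(X,Y), \qquad \Ric^\perp=2\kappa, \qquad R=6\kappa,
\]
so that $\langle h,\Rm^\perp\rangle=\kappa H_1$ (with $H_1=\operatorname{tr}h$), $\langle h^2,\Rm^\perp\rangle=\kappa|h|^2$ and $\langle\Rm^\perp,\Rm^\perp\rangle=2\kappa^2$. All normal derivatives $\partial_n\Ric^\perp$, $\partial_n R$, $\partial_n^2$ of these, and $\Rm^\perp_{;n}$ vanish, because $\nabla\Rm=0$ and in boundary normal coordinates $\Ric(\partial_n,\partial_n)=2\kappa$ identically. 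The Gauss equation then gives
\[
R^\top=2\kappa+2k_1k_2=2\kappa+(\operatorname{tr}h)^2-|h|^2 .
\]
I substitute this everywhere $R^\top$ appears, expressed in whichever of $H_1,H_2$ the paper uses. For $\widehat b_1$, after this substitution the term $\Delta^\top H_1$ integrates to zero. Collecting the cubic terms and the terms linear in $\kappa$ should then give \eqref{eq: b1}, with the sign $(-1)^{2}=+1$. I would carefully recheck the $\kappa H_1$ coefficient, since it receives contributions from the $H_1R^\top$, $H_1\Ric^\perp$ and $\langle h,\Rm^\perp\rangle$ terms.

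For $\widehat b_2$ the key extra inputs are the following.
\begin{enumerate}[(i)]
\item The Codazzi equation in constant curvature says $h$ is Codazzi, so $\dvg^\top h=\nabla^\top H_1$. Integrating by parts,
\[
\int_\Sigma\langle h,\Hess^\top H_1\rangle=-\int_\Sigma\langle \dvg^\top h,\nabla^\top H_1\rangle=-\int_\Sigma|\nabla^\top H_1|^2 .
\]
Hence the three gradient terms combine into a single multiple of $\int_\Sigma|\nabla^\top H_1|^2$.
\item The Laplacian terms $\Delta^\top H_2$ and $\Delta^\top\Ric^\perp$ integrate to zero.
\item The leftover $\kappa$-linear multiples of $R^\top$ are not eliminated via Gauss. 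Instead I keep exactly one copy of $\int_\Sigma R^\top=4\pi\chi(\Sigma)$, by Gauss--Bonnet on the closed surface $\Sigma$ (each component counted), which produces the $\kappa\chi(\Sigma)$ term. The remaining $R^\top$ occurrences are rewritten via Gauss.
\end{enumerate}
There is genuine freedom here: $\kappa\int k_1k_2$ can be traded against $\chi(\Sigma)$ and $\Area(\Sigma)$. So I will fix the normal form of \eqref{eq: b2}, namely quartic curvature terms, $\kappa\int H_1^2$, $\kappa\chi$ and $\kappa^2\Area$, and eliminate every $H_2$ and $R^\top$ appearing with a $\kappa$ factor in favour of these. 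Finally I multiply by $(-1)^{3}=-1$.

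The main obstacle is purely bookkeeping. The conventions (normalization of $H_1,H_2$, sign of $\Rm^\perp$) must match those actually used in Theorem~\ref{thm: main thm b1 b2}, and the many quartic and $\kappa$-weighted terms must be combined correctly, particularly in the Gauss--Bonnet reduction. To control this I would verify both final formulas on the ball $\mathbb B_\kappa(\rho)$. There $k_1=k_2=\operatorname{ct}_\kappa(\rho)$, $\nabla^\top H_1=0$ and $\chi=2$, so comparing with the explicitly known Steklov spectrum or residue of the ball gives an independent check of all coefficients.
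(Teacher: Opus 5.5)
Your route is the same as the paper's. You substitute $\Ric^\bot=2\kappa$, $\Rm^\bot=\kappa g$, the vanishing of all derivatives of the ambient curvature, and the Gauss equation into $\widehat b_1,\widehat b_2$. You then integrate, dropping $\Delta^\top H_1$ and $\Delta^\top H_2$, handling the gradient terms by Codazzi and integration by parts, and converting $\kappa\int_\Sigma H_2$ by Gauss--Bonnet.

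The one concrete problem is the normalization you yourself flagged, which you then fixed the wrong way. In this paper $H_1=\tfrac12(\lambda_1+\lambda_2)=\tfrac12\operatorname{tr}h$, not $\operatorname{tr}h$ (the characteristic polynomial of $h$ is $t^2-2H_1t+H_2$). The identities to insert into the theorem's formulas are therefore
\[
\langle h,\Rm^\bot\rangle=2\kappa H_1,\qquad \langle h^2,\Rm^\bot\rangle=\kappa(4H_1^2-2H_2),\qquad R^\top=2\kappa+2H_2,\qquad \dvg^\top h=2\nabla^\top H_1,
\]
together with $\int_\Sigma\langle h,\Hess^\top H_1\rangle=-2\int_\Sigma|\nabla^\top H_1|^2$.

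If you insert your $\langle h,\Rm^\bot\rangle=\kappa H_1$ into $\widehat b_1$, the three contributions are $\frac{3\pi}{8}+\frac{\pi}{8}+\frac{\pi}{8}$. That gives $\frac{5\pi}{8}\kappa\int_\Sigma H_1$ instead of $\frac{3\pi}{8}+\frac{\pi}{4}+\frac{\pi}{8}=\frac{3\pi}{4}$. The $\kappa$-weighted terms of $\Res(\Lambda^2)$ are sensitive in the same way.

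The gradient coefficient $\frac{\pi}{8}$ happens to be robust. The terms $\langle h,\Hess^\top H_1\rangle$ and $\langle\dvg^\top h,\nabla^\top H_1\rangle$ enter $\widehat b_2$ with equal coefficients and cancel after integration by parts alone, so only $|\nabla^\top H_1|^2$ survives.

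With the correct identities you get, pointwise,
\[
-\widehat b_2=\frac{\pi}{8}(2H_1^2-H_2)(H_1^2-H_2)-\frac{11\pi}{8}\kappa H_1^2-\frac{7\pi}{8}\kappa H_2+\frac{3\pi}{4}\kappa^2
\]
plus gradient and divergence terms. Then $\kappa\int_\Sigma H_2=2\pi\kappa\chi(\Sigma)-\kappa^2\Area(\Sigma,g)$ yields exactly the stated formula for $\Res(\Lambda^2)$.

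Your proposed safety net would not catch this reliably, and it does not check ``all coefficients.''
\begin{itemize}
\item On $\partial\mathbb B_\kappa(\rho)$ one has $H_1^2=H_2$ and $\nabla^\top H_1=0$, so the quartic and gradient coefficients are invisible there.
\item Since $\cn_\kappa^2=1-\kappa\sn_\kappa^2$, the three $\kappa$-weighted coefficients of $\Res(\Lambda^2)$ enter only through two combinations.
\item For $\kappa=0$ every term vanishes identically, so the check says nothing.
\item For $\kappa\neq0$ the ball's Steklov eigenvalues are ratios $y_\ell'(\rho)/y_\ell(\rho)$ of special-function solutions. You can extract $b_1$ and $b_2$ from them only through a separate large-$\ell$ asymptotic analysis, which is not an explicitly known quantity.
\end{itemize}
The factors of $2$ therefore have to be settled from the paper's definitions, not from the ball.
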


	In the flat case, Theorem \ref{thm: main thm b1 b2} also reveals a direct connection with Willmore geometry: the local residue density of $\Lambda$ vanishes precisely when
	\[
	\Delta^\top H_1+H_1(2H_1^2-R^\top)=0,
	\]
	the Euler--Lagrange equation for the Willmore functional
	\[
	\mathcal W(\Sigma)=\int_\Sigma H_1^2\,|d\sigma_g|;
	\]
	see \cite{riviere2008analysis}.  Thus the first logarithmic heat coefficient detects a classical conformal variational equation on the boundary.
	
	\subsection{Arrangement of the paper}
	
	The paper is organized as follows. Section~\ref{sec: preliminaries} recalls basic facts about heat invariants and the Guillemin--Wodzicki residue, with emphasis on the relation between the noncommutative residue and the logarithmic heat coefficients. Section~\ref{sec:metricpolyn} develops the algebraic framework for local scalar invariants near the boundary, and Section~\ref{sec: full symbol}  computes the full symbol of $\Lambda$ and $\Lambda^2$ using the Lee--Uhlmann formula. In Section~\ref{sec:GWResidue}, these ingredients are used to express the Guillemin--Wodzicki residue density of $\Lambda$ as a linear combination of eight invariant polynomials. Their coefficients are determined in Section~\ref{sec: proof of thm} by computations on several model geometries, yielding Theorem~\ref{thm: main thm b1 b2} and Corollary~\ref{cor: b_1 b_2 constant curvature}. Section~\ref{sec:apptoSpecGeom} applies the resulting spectral invariants to detect boundary geometry and topology and proves Theorem~\ref{thm:concentric_spherical_shell_inverse}. Finally, Section~\ref{sec:ProofThm1} specializes to constant-curvature manifolds and proves Theorem~\ref{thm: determine 3d geodesic balls} and Corollary~\ref{cor: determine 3d geodesic balls in orientable space forms}.

	{\bf Notation:} Throughout this paper, we write 
	\begin{itemize}
		\item Greek letter indices $\alpha,\beta,\dots$ range from $1$ to $n-1$;
		\item Latin letter indices $a,b,\dots$ range from $1$ to $n$;
		\item $J=(j_1, \cdots, j_{n-1})$, $J!=j_1!\cdots j_{n-1}!$;
		\item $\partial_x^J=\partial_{x^1}^{j_1} \cdots \partial_{x^{n-1}}^{j_{n-1}}$,  $D_{\xi}^J=D_{\xi_1}^{j_1} \cdots D_{\xi_{n-1}}^{j_{n-1}}$.
		
	\end{itemize}
	We use the Einstein summation convention unless otherwise stated. 
	Throughout the paper, our sign convention is
	\[
	H_1=\langle\vec H_1,-\nu\rangle,
	\]
	so that $H_1>0$ when the mean-curvature vector points into $\Omega$.  
	
	\section{Heat invariants via the Guillemin--Wodzicki  residue}\label{sec: preliminaries}
	
	The heat trace of the Dirichlet-to-Neumann operator has an asymptotic expansion
	\begin{equation}
		\label{eq: Dirichlet-to-Neumann heat trace}
		\Tr(e^{-t\Lambda(\Omega,g)})
		=
		\sum_{j=0}^\infty e^{-t\mu_j(\Omega,g)}
		\sim
		\sum_{j=0}^\infty a_jt^{-n+1+j}
		+
		\sum_{k=1}^\infty b_kt^k\log t	 
	\end{equation}
	as $t \to 0^+$,  which is a special case of the heat expansion for positive elliptic classical pseudodifferential operators \cite[Corollary 2.2$'$]{duistermaatSpectrumPositiveElliptic1975}.  All the  coefficients in \eqref{eq: Dirichlet-to-Neumann heat trace} are spectral invariants.  The coefficients $a_0,\ldots,a_{n-1}$ and all logarithmic coefficients $b_k$ are local in the sense that they are integrals over $\Sigma$ of universal expressions in finite jets of the metric near the boundary.
	
	The   coefficients $a_j$ have been studied systematically using Seeley's calculus; see, among others, \cite{seeleyComplexPowersElliptic1967,polterovichHeatInvariantsSteklov2015,liuAsymptoticExpansionTrace2015,wangRelativeHeatInvariants2019}.  For a three-dimensional manifold of constant sectional curvature $\kappa$, the first three coefficients are 
	\begin{align}
		\label{eq: a0}
		a_0
		&=\frac{1}{2\pi}\Area(\Sigma,g),\\
		\label{eq: a1}
		a_1
		&=\frac{1}{4\pi}\int_\Sigma H_1\,|d\sigma_g|,\\
		\label{eq: a2}
		a_2
		&=\frac{1}{16\pi}\int_\Sigma H_1^2\,|d\sigma_g|
		+\frac{1}{24}\chi(\Sigma)
		+\frac{3\kappa}{16\pi}\Area(\Sigma,g).
	\end{align}
	Here $H_1=(\lambda_1+\lambda_2)/2$ is the normalized mean curvature, $H_2=\lambda_1\lambda_2$ is the second normalized mean curvature, and $\lambda_1,\lambda_2$ are the principal curvatures with the sign convention specified above.  Thus $a_0,a_1,a_2$ detect the boundary area, the total mean curvature, and a combination of the Willmore energy and the Euler characteristic.  These invariants are fundamental, but by themselves they do not provide enough positivity to control a possibly disconnected boundary in all three space forms.
	
	In contrast, the logarithmic coefficients have received much less attention.  A direct computation of $b_k$ through the standard resolvent-parametrix construction requires a lengthy recursive symbol calculation.  Even in dimension three,  computing $b_1$ and $b_2$ in this way already involves several orders of the full symbol and quickly becomes unwieldy.  Our approach replaces
	that calculation with the \emph{Guillemin--Wodzicki noncommutative residue},
	introduced independently by Wodzicki and Guillemin
	\cite{wodzickiLocalInvariantsSpectral1984,guilleminNewProofWeyl1985}. For applications of the Guillemin--Wodzicki residue in spectral geometry, see  \cite{guilleminNewProofWeyl1985,zelditchLecturesWaveInvariants1999}; for generalizations and applications in other settings, see, for example, \cite{connesLocalIndexFormula1995,guilleminResidueTracesCertain1993,fedosovNoncommutativeResidueManifolds1996,pongeNoncommutativeResidueHeisenberg2007}. In \cite{dabrowskiSpectralMetricEinstein2023}, the Guillemin--Wodzicki residues of specific powers of the Laplacian are computed.

	We recall the construction and the two properties that are relevant here.  Let $M$ be a closed $d$-dimensional manifold and let $A\in\Psi_{\mathrm{cl}}^m(M)$ be a classical pseudodifferential operator with full symbol
	\[
	\sigma(\phi,A)(x,\xi) \sim \sum_{j=0}^\infty \sigma_{m-j}(\phi,A)(x,\xi) 
	\]
	in a local coordinate system $\phi$, where $\sigma_{m-j}(\phi,A)(x,\xi)$ is homogeneous of degree $m-j$ in $\xi$ for large $|\xi|$. The {\em Guillemin--Wodzicki residue density} of $A$ is defined as
	\begin{equation}
		\label{eq: residue density introduction}
		\operatorname{res}_x(A) 	:=  \left(\int_{|\xi|=1} \sigma_{-d}(\phi,A)(x,\xi)|d\sigma_\xi| \right)|dx|,
	\end{equation}
	and the   {\em Guillemin--Wodzicki residue} of $A$ is 
	\begin{equation}
		\Res(A):=\int_M\operatorname{res}_x(A).
	\end{equation}
	Our convention in \eqref{eq: residue density introduction} omits the factor
	$(2\pi)^{-d}$ that is included in some definitions of the noncommutative
	residue.  Although the formula is written in coordinates, $\operatorname{res}_x(A)$
	is a globally defined density, independent of the coordinates and of the
	auxiliary norm used to specify $|\xi|=1$
	\cite{wodzickiNoncommutativeResidueFundamentals1987}. 
	Moreover,
	\[
	\Res([A,B])=0.
	\]
	Thus $\Res$ is a trace on the algebra of classical pseudodifferential operators  modulo smoothing operators. In this sense, the single homogeneous symbol term of degree $-d$ extracts the obstruction to extending the ordinary operator trace to the full classical pseudodifferential calculus.  Equivalently, up to the same normalization, it is the coefficient of the logarithmic divergence obtained by restricting the Schwartz kernel to the diagonal.  The same trace also plays a central role in noncommutative geometry, notably in the local index formula \cite{connesLocalIndexFormula1995}.

	The name \emph{residue} also has a literal spectral meaning.  If $Q$ is a positive invertible elliptic classical pseudodifferential operator of order $q>0$, then $\Tr(AQ^{-s})$, initially defined for $\Re s$ sufficiently large, extends meromorphically to the complex plane and
	\begin{equation}
		\label{eq: residue-zeta bridge}
		\underset{s=0}{\operatorname{res}} 	\Tr(AQ^{-s})
		= 	\frac{1}{q(2\pi)^d}\Res(A).
	\end{equation}
	This identity is the bridge from the local symbol in \eqref{eq: residue density introduction} to heat and zeta asymptotics.  It also explains why the noncommutative residue is particularly effective here: a logarithmic heat coefficient is encoded by an ordinary power of $\Lambda$, not by a resolvent of $\Lambda$.

	There is a minor point concerning the zero Steklov eigenvalue.  Let $\Pi_0$ be the orthogonal projection onto $\ker\Lambda$ and put $\widetilde\Lambda=\Lambda+\Pi_0$.  Then $\widetilde\Lambda$ is invertible and $\widetilde\Lambda^k-\Lambda^k$ is smoothing for every positive integer $k$. With the reduced zeta function
	\[ \zeta_\Lambda(s):=\sum_{\mu_j>0}\mu_j^{-s}, \]
	the Mellin transform of \eqref{eq: Dirichlet-to-Neumann heat trace}, together with \eqref{eq: residue-zeta bridge}, gives \cite{wodzickiNoncommutativeResidueFundamentals1987}
	\begin{align}
		\label{eq: a_j as residue}
		a_j	&=\frac{\Gamma(d-j)}{(2\pi)^d}
		\Res\bigl(\widetilde\Lambda(\Omega,g)^{\,j-d}\bigr), 	&&j=0,1,\ldots,d-1,\\
		\label{eq: bk-residue-introduction}
		b_k	&=\frac{(-1)^{k+1}}{(2\pi)^d k!}\Res(\Lambda(\Omega,g)^k), 	&&k=1,2,\ldots.
	\end{align}
	By contrast, the constant coefficient is \cite{seeleyComplexPowersElliptic1967}
	\[a_d=\zeta_\Lambda(0)+\dim\ker\Lambda;\]
	in the present connected setting, $\dim\ker\Lambda=1$.  This is why the local coefficient $a_d$ is not covered by the first formula in \eqref{eq: a_j as residue}.

	Formula \eqref{eq: bk-residue-introduction} reduces the computation of $b_k$
	to the homogeneous term of degree $-d$ in the full symbol of $\Lambda^k$.
	Since $d=2$ in the three-dimensional problem, it is enough to identify the
	degree $-2$ terms in the symbols of $\Lambda$ and $\Lambda^2$.

\section{Metric polynomials on Riemannian manifolds with boundary}
\label{sec:metricpolyn}

The purpose of this section is to set up a purely algebraic language for local scalar invariants near the boundary. This separates the problem into two parts: characterizing the possible metric-jet combinations appearing in the residue density, and reducing them to contractions of curvature and the second fundamental form. In what follows, we fix an integer $n\ge2$.

\subsection{The algebra of metric polynomials}
We begin by introducing the algebras that encode the metric jets appearing in the full symbol of $A_{x^n}$ and in the residue density. Let $\mathcal G$ be the set of formal variables 
\[\mathcal G=\{\mathsf g_{\alpha\beta,c_1\dots c_w}\mid1\le\alpha,\beta\le n-1,1\le c_1,\dots,c_w\le n, w\ge 0\}\]
subject to the symmetry
\[\mathsf g_{\alpha\beta,c_1\dots c_w} = \mathsf g_{\alpha'\beta',c_1'\dots c_w'}\]
whenever $\{\alpha,\beta\}=\{\alpha',\beta'\}$ and $\{c_1,\dots, c_w\}= \{c_1',\dots, c_w'\}$ as multisets.  
These formal variables are modeled on the derivatives of the metric tensor. We also need another formal variable $\mathsf G$ which is modeled on the square root of the determinant of the metric matrix. 
Let $\mathcal R$ denote the algebra $\mathbb R[\mathcal G,\mathsf G]/(\mathsf G^2 - \det\mathsf g)$. We define the algebra $\mathcal P$ as the localization of $\mathcal R$ at $\mathsf G$, i.e. 
\[\mathcal P = \left(\mathbb R[\mathcal G,\mathsf G]/(\mathsf G^2 - \det\mathsf g)\right)_{\mathsf G},\]
and call elements of $\mathcal P$ {\em metric polynomials}.  They are polynomials in the variables from $\mathcal G$ together with $\mathsf G$ and $\mathsf G^{-1}$.    
There is a natural embedding of $\mathcal R$ into $\mathcal P$ because $\mathcal R$ is an integral domain (see Appendix \ref{apd: invariance}). We will not distinguish elements in $\mathcal R$ and their image in $\mathcal P$. 
The matrix $\mathsf g$ is invertible in $\mathrm{Mat}_{(n-1)\times(n-1)}(\mathcal P)$ and the inverse is given by $\mathsf g^{-1}:=(\mathsf g^{\alpha\beta})$ with $\mathsf g^{\alpha\beta} = \mathsf G^{-2}\mathsf g^*_{\alpha\beta}$, where $(\mathsf g^*_{\alpha\beta})$ is the adjugate matrix of $\mathsf g$.

For $c = 1,\dots,n$ and $\mathsf g_{\alpha\beta,c_1\dots c_w}\in\mathcal G$, if we define
\[\partial_c\mathsf g_{\alpha\beta,c_1\dots c_w} := \mathsf g_{\alpha\beta,c_1\dots c_wc}\quad\text{and}\quad \partial_c\mathsf G = \frac{1}{2}\mathsf G^{-1}\partial_c\det\mathsf g,\]
then $\partial_c$ extends to a derivation on $\mathbb R[\mathcal G,\mathsf G]$ by linearity and the Leibniz rule. Since $\partial_c$ preserves the ideal $(\mathsf G^2-\det\mathsf g)$, it induces a derivation on $\mathcal R$. Finally, if we set
\[\partial_c(\mathsf G^{-1}) = -\mathsf G^{-2}\partial_c\mathsf G,\]
then $\partial_c(\mathsf G\mathsf G^{-1}-1)=0$ and hence $\partial_c$ further extends to a derivation on $\mathcal P$.

The algebra $\mathcal P$ is naturally related to polynomial expressions in a Riemannian metric and its derivatives as follows. Let $(\Omega,g)$ be a Riemannian manifold with smooth boundary. If $q\in\partial\Omega$, let $\gamma_q(t)$ be the geodesic starting from $q$ with $\dot\gamma_q(0)$ equal to the inward unit normal. We say $(\Omega,g)$ admits an $\varepsilon$-collar if $(q,t) \mapsto \gamma_q(t)$ defines a diffeomorphism from $\partial\Omega\times[0,\varepsilon)$ onto a neighborhood of $\partial\Omega$, called an  {\em $\varepsilon$-collar}. By the collar neighborhood theorem, every compact Riemannian manifold with smooth boundary admits an $\varepsilon$-collar for $\varepsilon$ small enough.

For any open subset $U$ of $\partial\Omega$, let $U^\varepsilon$ be the image of $U\times[0,\varepsilon)$ under the above diffeomorphism. If $(x^1,\dots,x^{n-1})$ is a coordinate system on $U$, then, with $x^n=t$, the coordinates $(x^1,\dots,x^n)$ defines a coordinate system on $U^\varepsilon$, called the {\em boundary normal coordinate system associated with} $(x^1,\dots, x^{n-1})$. In such coordinate systems, the Riemannian metric takes the form
\begin{equation*}
    g(p) = g_{\alpha\beta}(p)dx^\alpha\otimes dx^\beta + dx^n\otimes dx^n,\quad\forall p\in U^\varepsilon.
\end{equation*}
For $x^n\in[0,\varepsilon)$, let $g_{x^n}$ denote the pull-back metric on $\partial\Omega$ under the smooth map $q\mapsto\gamma_q(x^n)$. Write $|d\sigma_{x^n}|$ for the area density of the metric $g_{x^n}$.

Now we associate to any abstract element in the algebra $\mathcal P$ a concrete polynomial in entries of the Riemannian metric and their derivatives.  
Let $\phi = (x^1,\dots,x^{n-1})$ be a local coordinate system defined on $U\subset\partial\Omega$. We construct an evaluation map $\mathcal P\to C^\infty(U\times[0,\varepsilon))$, $\mathsf p\mapsto\mathsf p(\phi,g)$ by setting
\[\mathsf g_{\alpha\beta,c_1\dots c_w} \mapsto \partial_{x^{c_w}}\cdots\partial_{x^{c_1}}(g_{x^n})_{\alpha\beta},\quad\mathsf G\mapsto\sqrt{\det((g_{x^n})_{\alpha\beta})}\] 
where $(g_{x^n})_{\alpha\beta} = g_{x^n}(\partial_{x^\alpha},\partial_{x^\beta})$.  
This association is faithful in the following sense:
\begin{proposition}\label{prop: faithful}
	Let $\mathsf p\in\mathcal P$. If for any compact Riemannian manifold $(\Omega,g)$  with an $\varepsilon$-collar and any boundary normal coordinate system $\phi$ on $U \subset \partial \Omega$,  $\mathsf p(\phi,g) = 0$, then $\mathsf p = 0$.
\end{proposition}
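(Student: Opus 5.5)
Since $\mathsf G$ is invertible in $\mathcal P$, write $\mathsf p=\mathsf G^{-N}\mathsf r$ with $\mathsf r\in\mathcal R$ and $N$ large. The evaluation map is a ring homomorphism, and $\mathsf G(\phi,g)=\sqrt{\det g_{x^n}}>0$. So $\mathsf p(\phi,g)=0$ implies $\mathsf r(\phi,g)=0$, and it suffices to treat $\mathsf r\in\mathcal R$.

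Next, use the relation $\mathsf G^2=\det\mathsf g$ to write every element of $\mathcal R$ uniquely as $\mathsf r=\mathsf a+\mathsf b\,\mathsf G$ with $\mathsf a,\mathsf b\in\mathbb R[\mathcal G]$. This is a division by the monic polynomial $\mathsf G^2-\det\mathsf g$ in $\mathsf G$. The plan is then:
\begin{enumerate}[(i)]
\item show that every polynomial $\mathsf a\in\mathbb R[\mathcal G]$ vanishing on all evaluations is zero;
\item handle the $\mathsf G$-part.
\end{enumerate}

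For (ii), suppose $\mathsf a+\mathsf b\sqrt{\det g}$ vanishes identically. Then $\mathsf a^2-\mathsf b^2\det\mathsf g$ vanishes on all evaluations, so by (i) it is zero in $\mathbb R[\mathcal G]$. But $\det\mathsf g$ is not a square in the UFD $\mathbb R[\mathcal G]$: it is irreducible of degree $n-1$ in the variables $\mathsf g_{\alpha\beta}$ when $n-1\ge2$, and for $n=2$ it is the single variable $\mathsf g_{11}$. Hence $\mathsf b=0$, and then $\mathsf a=0$.

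The core is (i). Only finitely many variables occur in $\mathsf a$, say those with $w\le m$, evaluated at one point. Work locally: take $U=\{|x'|<1\}\subset\mathbb R^{n-1}$ and the metric $g=g_{\alpha\beta}(x)\,dx^\alpha dx^\beta+(dx^n)^2$ on $U\times[0,1)$. Here $g_{\alpha\beta}(x)$ is a polynomial family of positive definite matrices:
\[
g_{\alpha\beta}(x)=S_{\alpha\beta}+\sum_{1\le|\gamma|\le m}\frac{c_{\alpha\beta,\gamma}}{\gamma!}x^\gamma ,
\]
with $S$ positive definite and $c_{\alpha\beta,\gamma}=c_{\beta\alpha,\gamma}$ arbitrary, small enough that positivity holds on the region.

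In such coordinates $x^n$ is automatically the geodesic distance to $\{x^n=0\}$, and the curves $t\mapsto(q,t)$ are unit speed normal geodesics. So $(x^1,\dots,x^n)$ are boundary normal coordinates and $g_{x^n}$ is exactly this family.

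To obtain a compact manifold I plan to cut off far from the origin, glue into a compact manifold with boundary, and shrink $\varepsilon$. The hypothesis is about all compact manifolds with a collar and all boundary normal charts, and only the jet at the origin matters. So the cutoff is harmless, provided the collar hypothesis is read for some $\varepsilon$; I expect this gluing to be routine.

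Evaluating at the origin gives $\mathsf g_{\alpha\beta,\gamma}\mapsto c_{\alpha\beta,\gamma}$ and $\mathsf g_{\alpha\beta}\mapsto S_{\alpha\beta}$. The symmetry relations defining $\mathcal G$ match exactly the independent parameters, namely symmetric $(\alpha\beta)$ and a multiset of derivative indices. So $\mathsf a$ vanishes on a nonempty open subset of the parameter space $\{S>0\}\times\mathbb R^K$, and therefore $\mathsf a=0$.

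The main obstacle is the first point of (i): ensuring that distinct formal variables really correspond to independently prescribable jets, including normal derivatives $c_i=n$. The explicit product-type metric above handles this because it imposes no constraint linking normal and tangential derivatives of $g_{\alpha\beta}$. I would double-check that the faithfulness claim uses exactly the symmetric-multiset identification and nothing more. The integral-domain fact for $\mathcal R$ is not needed beyond the unique decomposition $\mathsf a+\mathsf b\mathsf G$.
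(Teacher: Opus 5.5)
Your proposal is correct and follows essentially the paper's route. You clear powers of $\mathsf G$ to land in $\mathcal R$, write $\mathsf r=\mathsf a+\mathsf b\,\mathsf G$, pass to $\mathsf a^2-\mathsf b^2\det\mathsf g\in\mathbb R[\mathcal G]$, and prove faithfulness on $\mathbb R[\mathcal G]$ by realizing an open set of jets with a metric of the form $g_{\alpha\beta}(x)\,dx^\alpha dx^\beta+(dx^n)^2$. The differences are minor: you use explicit polynomial metrics where the paper invokes Borel's lemma, and you deduce $\mathsf b=0$ directly from $\det\mathsf g$ not being a square in the UFD $\mathbb R[\mathcal G]$, rather than via the paper's lemma that $\mathcal R$ is an integral domain (the same underlying fact); you also explicitly check the compactness and boundary-normal-coordinate points that the paper leaves implicit.
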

See Appendix \ref{apd: invariance} for the proof.

\subsection{Invariant polynomials}\label{subsec: exp of inv poly}
 
Let $\mathsf p\in\mathcal P$. 
Consider the family of Riemannian metrics $g_{x^n}$ parametrized by $x^n$ on $\Sigma=\partial \Omega$.
We say  $\mathsf p\in\mathcal P$ is {\em invariant} if for any $(q,t) \in \Sigma \times [0,\varepsilon)$, and  any geodesic normal coordinate chart $\phi=(x^1,\dots,x^{n-1})$  of $(\Sigma, g_{t})$ centered at $q$, the evaluation $\mathsf p(\phi,g)(q)$ depends only on $q$ and $t$ and is independent of the choice of $\phi$. In this case we denote $\mathsf p(\phi,g)$ by $\mathsf p(g)$.

We list several invariant polynomials that will be used later.
\begin{enumerate}
	\item  The metric polynomial $\mathsf G$ is invariant because $\mathsf G(g) \equiv \sqrt{\det(\delta_{\alpha\beta})} = 1$ for any metric $g$.
	\item As in Riemannian geometry, we set 
	\[\mathsf\Gamma_{ab}^c = \frac{1}{2}\mathsf g^{cd}(\mathsf g_{db,a} + \mathsf g_{ad,b} - \mathsf g_{ab,d})\]
	and 
	\[\mathsf{Rm}_{abcd} = \mathsf g_{dp}\left(\partial_b\mathsf\Gamma_{ac}^p-\partial_a\mathsf\Gamma_{bc}^p + \mathsf\Gamma_{ac}^q\mathsf\Gamma_{bq}^p - \mathsf\Gamma_{bc}^q\mathsf\Gamma_{aq}^p\right).\]
	Here we assume $\mathsf g_{an,c_1\dots c_w} = 0$ unless $a = n$ and $w=0$, in which case $\mathsf g_{an,c_1\dots c_w} = 1$. These coordinate components are not invariants.  However, the following   metric polynomials
	\begin{itemize}
		\item $\mathsf R = \mathsf g^{ac}\mathsf g^{bd}\mathsf{Rm}_{abcd}$
		\item $\mathsf {Ric}^\bot = \mathsf g^{ab}\mathsf{Rm}_{anbn}$
	\end{itemize} 
	are invariants: after evaluating on a Riemannian manifold, they  give the scalar curvature and the Ricci curvature in the normal direction.

	Similarly, one can define the tangential Christoffel symbols  $(\mathsf\Gamma^\top)_{\alpha\beta}^\gamma$, and curvature components $\mathsf{Rm}^\top_{\alpha\beta\gamma\delta}$; then the resulting polynomial 
	\begin{itemize}
		\item $\mathsf R^\top =  \mathsf g^{\alpha\gamma}\mathsf g^{\beta\delta}\mathsf{Rm}^\top_{\alpha\beta\gamma\delta}$
	\end{itemize}  
	is invariant and represents the scalar curvature of $(\partial\Omega,g_{x^n})$.
	\item Set $\mathsf h = (\mathsf h_{\alpha\beta}) := ( -\frac{1}{2}\mathsf g_{\alpha\beta,n})\in\mathrm{Mat}_{(n-1)\times(n-1)}(\mathcal P)$ and assume
	\[\det(tI-\mathsf g^{-1}\mathsf h) = \sum_{k=0}^{n-1}(-1)^k\binom{n-1}{k}\mathsf H_{k}t^{n-1-k},\]
	where $n=\dim \Omega$. 
	Then each $\mathsf H_k$ is invariant and represents the pull-back to $\Sigma$ of the $k$-th order mean curvature of $ \{x^n = t\}$ in $(\Omega, g)$.

\item	 The same idea applies to any tensor expression in Riemannian geometry. Let $k\ge 0$ and let $\{\mathsf p_{a_1\dots a_k}|1\le a_1,\dots,a_k\le n\}\subset\mathcal P$ be a family of metric polynomials. We may define their {\em covariant derivatives} by
	\begin{equation*}
		\label{eq: def of cov diff}
		\nabla\mathsf p_{a_1\dots a_kc} := \partial_{c}\mathsf p_{a_1\cdots a_k}-\sum_{j=1}^k\sum_{b=1}^n\mathsf\Gamma_{ca_j}^{b}\mathsf p_{a_1\cdots a_{j-1}ba_{j+1}\cdots a_k},
	\end{equation*}
	and write $\mathsf p_{a_1\dots a_k;c_1\dots c_w}$ for
	\[\nabla(\cdots(\nabla\mathsf p)\cdots)_{a_1\dots a_kc_1\dots c_w}.\]
	Similarly, by restricting all subscripts to range from $1$ to $n-1$, one can define {\em covariant derivatives along the boundary}, for which we write $\mathsf p_{\alpha_1\dots\alpha_k:\gamma_1\dots \gamma_w}$ to distinguish them. The semicolon `;' indicates the ``ambient covariant derivatives'', while the colon `:' indicates the induced ``tangential covariant derivatives''. For example, one may define the tangential Hessian to be 
	\[
	\mathrm{Hess}^\top\mathsf p_{\alpha\beta} := \mathsf p_{:\alpha\beta}.
	\]
	It follows immediately that
	\begin{itemize}
		\item If $\mathsf p$ is invariant, so is $\mathsf{\Delta^\top p} := \mathsf g^{\alpha\beta}\mathrm{Hess}^\top\mathsf p_{\alpha\beta}$.
	\end{itemize}
	
\item	For two families $\{\mathsf p_{\alpha_1\dots \alpha_k}^{\beta_1\dots \beta_l}|1\le \alpha_1, \dots, \alpha_k,     \beta_1,\dots,\beta_l \le n-1\}$ and $\{\mathsf q_{\alpha_1\dots \alpha_k}^{\beta_1\dots \beta_l}|1\le \alpha_1,\dots,\alpha_k,\beta_1,\dots,\beta_l\le n-1\}$  of metric polynomials, we set
	\[\langle\mathsf p,\mathsf q\rangle = \mathsf g^{\alpha_1\gamma_1}\cdots \mathsf g^{\alpha_k\gamma_k}\mathsf g_{\beta_1\delta_1}\cdots \mathsf g_{\beta_l\delta_l}\mathsf p_{\alpha_1\dots \alpha_k}^{\beta_1\dots \beta_l}\mathsf q_{\gamma_1\dots \gamma_k}^{\delta_1\dots \delta_l}.\]   
    With this notation, we can write down more invariant polynomials that will appear in Theorem \ref{thm: main thm b1 b2}:
	\begin{itemize} 
    \item If $\mathsf p$ is invariant, so is  $\langle \nabla \mathsf p, \nabla \mathsf p\rangle$,
	\item $\langle\mathsf h,\mathsf {Rm}^\bot\rangle$, $\langle\mathsf h, \mathrm{Hess}^\top\mathsf p\rangle$, $\langle\dvg^\top \mathsf h,\nabla^\top\mathsf H_1\rangle$, $\langle\mathsf h^2,\mathsf{Rm}^\bot\rangle$, $\langle\mathsf{Rm}^\bot,\mathsf{Rm}^\bot\rangle$, $\langle\mathsf h,\nabla_n\mathsf{Rm}^\bot\rangle$.
    \end{itemize}
 where 
 \[
 \mathsf {Rm}^\bot_{\alpha\beta} := \mathsf{Rm}_{\alpha n\beta n}, \quad  \mathsf h_{\alpha\beta}^2 := \mathsf g^{\gamma\delta}\mathsf h_{\alpha\gamma}\mathsf h_{\beta\delta}, \quad \text{and}\quad    (\dvg^\top\mathsf h)_\alpha := \mathsf g^{\beta\gamma}\mathsf h_{\alpha\beta:\gamma}.
 \]
  \end{enumerate} 

By Proposition \ref{prop: faithful}, if an identity holds in any local coordinate system, then it holds in the formal algebra as well. In particular, in the formal algebra $\mathcal P$ we also have the Gauss--Codazzi equations 
\begin{align}
    \label{eq: Gauss equation}
    \mathsf{Rm}^\top_{\alpha_1\beta_1\alpha_2\beta_2} & = \mathsf{Rm}_{\alpha_1\beta_1\alpha_2\beta_2} + \mathsf h_{\alpha_1\alpha_2}\mathsf h_{\beta_1\beta_2} - \mathsf h_{\alpha_1\beta_2}\mathsf h_{\alpha_2\beta_1},\\
    \label{eq: Codazzi equation}
    \mathsf{Rm}_{\alpha\beta\gamma n} & = \mathsf h_{\beta\gamma:\alpha} - \mathsf h_{\alpha\gamma:\beta},
\end{align}
and the Bianchi identities 
\begin{align}
    \label{eq: 1st Bianchi equation}
    \mathsf{Rm}_{abcd} + \mathsf{Rm}_{acdb} + \mathsf{Rm}_{adbc} & = 0,\\
    \label{eq: 2nd Bianchi equation}
    \mathsf{Rm}_{abcd;e} + \mathsf{Rm}_{abde;c} + \mathsf{Rm}_{abec;d} & = 0.
\end{align}

\subsection{The quotient algebra $\mathcal O$}
Let $\mathcal O$ denote the quotient algebra $\mathcal P/\mathcal J$, where $\mathcal J$ is the ideal generated by
\begin{equation}\label{eq: kernel of P to O}
    \mathsf G-1,\quad\mathsf g_{\alpha\beta}-\delta_{\alpha\beta},\quad \mathsf g_{\alpha\beta,\gamma},\quad \alpha,\beta,\gamma = 1,\dots,n-1.
\end{equation}

Let $A = (A_{\alpha\beta})\in\Ogroup{n-1}$. For $\mathsf g_{\alpha\beta,\gamma_1\dots\gamma_tn\dots n}$,  we set 
\[A.\mathsf g_{\alpha\beta,\gamma_1\dots\gamma_tn\dots n}  = \sum_{1\le\alpha',\beta',\gamma_1',\dots,\gamma_t'\le n-1} A_{\alpha\alpha'}A_{\beta\beta'}A_{\gamma_1\gamma_1'}\dots A_{\gamma_t\gamma_t'}\mathsf g_{\alpha'\beta',\gamma_1'\dots\gamma_t'n\dots n}\]
and
\[A.\mathsf G = \mathsf G.\]
Because
\[\sum_{1\le\alpha'\le n-1} A_{\alpha\alpha'} B_{\alpha'\alpha''} =  (AB)_{\alpha\alpha''},\]
for any $A, B\in\Ogroup{n-1}$, this defines an $\Ogroup{n-1}$-action on 
$\mathbb R[\mathcal G,\mathsf G]$. Moreover, since $A.(\mathsf G^2 - \det\mathsf g) = \mathsf G^2 - \det(A)^2\det\mathsf g = \mathsf G^2 - \det\mathsf g$, this induces an $\Ogroup{n-1}$-action on $\mathcal R$, which extends to an action on $\mathcal P$ by setting $A.\mathsf G^{-1} = \mathsf G^{-1}$. Finally, since
\[\sum_{1\le\alpha',\beta'\le n-1}A_{\alpha\alpha'}A_{\beta\beta'}\delta_{\alpha'\beta'} = \delta_{\alpha\beta},\]
the ideal $\mathcal J$ is invariant under the $\Ogroup{n-1}$-action and hence the action descends to an $\Ogroup{n-1}$-action on $\mathcal O$.

The following proposition converts the invariance of metric polynomials into a purely algebraic property, so that we may apply  Weyl's invariant theory \cite{weyl1997classical} in what follows.
\begin{proposition}
    \label{prop: evaluation invariance is algebraic invariance}
    Let $\mathsf p\in\mathcal P$. Then $\mathsf p$ is invariant if and only if its image in $\mathcal O$ is fixed by the $\Ogroup{n-1}$-action.
\end{proposition}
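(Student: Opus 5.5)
The plan is to read off evaluation in geodesic normal coordinates of $(\Sigma,g_t)$ centered at $q$ as a ring homomorphism that factors through $\mathcal O$, and then to compare two such charts through an orthogonal change of coordinates. Fix $(q,t)$ and a geodesic normal chart $\phi$ of $(\Sigma,g_t)$ centered at $q$. The composite $\mathsf p\mapsto \mathsf p(\phi,g)(q)$ is an $\mathbb R$-algebra homomorphism $\mathrm{ev}_{\phi}:\mathcal P\to\mathbb R$. In normal coordinates at the center we have $(g_t)_{\alpha\beta}(q)=\delta_{\alpha\beta}$, $\partial_\gamma (g_t)_{\alpha\beta}(q)=0$, and $\sqrt{\det}=1$. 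Hence $\mathrm{ev}_\phi$ kills the generators \eqref{eq: kernel of P to O} of $\mathcal J$ and descends to $\overline{\mathrm{ev}}_\phi:\mathcal O\to\mathbb R$.

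Any two geodesic normal charts at $q$ for $g_t$ differ by a linear map $\phi'=A\phi$ with $A\in\Ogroup{n-1}$. Extending both to boundary normal coordinates (the $x^n$ direction is untouched), the chain rule gives $\mathrm{ev}_{\phi'}(\mathsf g_{\alpha\beta,\gamma_1\dots\gamma_s n\dots n})=\mathrm{ev}_\phi(A^{\pm1}.\mathsf g_{\alpha\beta,\gamma_1\dots\gamma_s n\dots n})$, since a linear change acts tensorially on every tangential index and trivially on the $n$ indices. I will fix the convention ($A$ versus $A^{T}$) so that this is exactly the action defined above. This yields the key identity $\overline{\mathrm{ev}}_{A\phi}=\overline{\mathrm{ev}}_\phi\circ A$ on $\mathcal O$.

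For the "if" direction: if $\bar{\mathsf p}$ is fixed by $\Ogroup{n-1}$, the identity immediately gives $\mathsf p(A\phi,g)(q)=\mathsf p(\phi,g)(q)$, so $\mathsf p$ is invariant.

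For the "only if" direction, suppose $\mathsf p$ is invariant and let $A\in\Ogroup{n-1}$; I want $A.\bar{\mathsf p}=\bar{\mathsf p}$ in $\mathcal O$. First I will identify $\mathcal O$ concretely. Modulo $\mathcal J$, $\mathsf G$ and $\mathsf g^{-1}$ become $1$ and $\delta$, so $\mathcal O$ is the polynomial ring in the remaining jet variables: $\mathsf g_{\alpha\beta,c_1\dots c_w}$ with $w\ge2$ or containing some $n$ index. These are subject only to the stated symmetries, once we choose representatives with sorted indices. The symmetric $\mathsf g_{\alpha\beta,\gamma_1\gamma_2\dots}$ have no further relations.

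Invariance gives $\overline{\mathrm{ev}}_\phi(A.\bar{\mathsf p}-\bar{\mathsf p})=0$ for every manifold, every $(q,t)$, and every normal chart $\phi$. So it suffices to prove a faithfulness statement for $\mathcal O$: an element of $\mathcal O$ vanishing under all such normal-coordinate evaluations is zero. This is the main obstacle. Unlike Proposition~\ref{prop: faithful}, the values realized by normal coordinates are constrained; for instance, second tangential derivatives are governed by curvature, $\partial_\gamma\partial_\delta g_{\alpha\beta}(q)=-\tfrac13(R_{\alpha\gamma\beta\delta}+R_{\alpha\delta\beta\gamma})$. So the jet variables cannot take arbitrary values.

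I would handle this in one of two ways.
\begin{enumerate}[(a)]
\item Reinterpret $\mathcal O$: the symmetrized combinations that survive in normal coordinates are exactly the algebraically independent ones. The identities satisfied by normal-coordinate jets (e.g.\ vanishing of the totally symmetrized tangential derivatives $\mathsf g_{(\alpha\beta,\gamma_1\dots\gamma_s)}$) should already hold, or be imposed, in $\mathcal O$. I would check how the paper's $\mathcal J$ interacts with this.
\item More robustly, avoid normal coordinates in the faithfulness step. Invariance under normal charts implies that the value of $\mathsf p$ at $q$ is a natural function of the jet of $g$. Using the appendix proof of Proposition~\ref{prop: faithful} (realizing arbitrary jets by a collar metric $g_{\alpha\beta}(x,x^n)$ chosen polynomially), I would realize any prescribed values of the generators of $\mathcal O$ by a metric whose tangential coordinates merely satisfy $g_{\alpha\beta}(q)=\delta$ and $\partial_\gamma g_{\alpha\beta}(q)=0$. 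Such a chart agrees with a normal chart to second order, and the linear part relating the two charts is orthogonal. I would then compare through the $\Ogroup{n-1}$ action, or restrict attention to that class of charts, which is where invariance is applied in practice. The resulting polynomial identity on an open set of jets then forces $A.\bar{\mathsf p}=\bar{\mathsf p}$.
\end{enumerate}
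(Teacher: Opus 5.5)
Your ``if'' direction is correct and is the paper's argument: your identity $\overline{\mathrm{ev}}_{A\phi}=\overline{\mathrm{ev}}_\phi\circ A$ is its $\mathsf p(\phi_{\mathrm e.A},g)=A.\mathsf p(\phi_{\mathrm e},g)$. With $\phi'=A\phi$ and $A^{-1}=A^{T}$, the chain rule gives exactly the action as defined.

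For ``only if'' you isolate the right statement, namely faithfulness of normal-chart evaluation on $\mathcal O$, and you correctly observe that it fails. But neither (a) nor (b) is carried out, and no argument can close this gap: with $\mathcal J$ as defined, the ``only if'' direction is false. Take $n\ge3$ and $\mathsf p=\mathsf g_{11,11}$. By the Gauss lemma, $(g_t)_{\alpha\beta}(x)x^\beta=x^\alpha$ in any geodesic normal chart of $(\Sigma,g_t)$ centered at $q$. Hence $(g_t)_{11}\equiv1$ along the $x^1$-axis, and $\mathsf p(\phi,g)(q,t)=0$ for every such chart, so $\mathsf p$ is invariant. Yet $\mathcal O\cong\mathbb R[\mathcal G']$ is a free polynomial ring (as the paper's own proof records). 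For the $A\in\Ogroup{n-1}$ exchanging $e_1$ and $e_2$, one gets $A.\bar{\mathsf p}=\mathsf g_{22,22}\neq\mathsf g_{11,11}=\bar{\mathsf p}$. (For $n=2$, use $\mathsf g_{11,111}$ and $A=-1$.)

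The paper's proof passes over precisely your obstacle. Its last step, ``a similar argument as in the proof of Proposition~\ref{prop: faithful}'', would need Borel's lemma to realize arbitrary values of the $\mathcal G'$-variables by jets in geodesic normal coordinates, which is impossible. For the same reason \eqref{eq: g (2,0)} does not hold modulo $\mathcal J$: for $\alpha=\beta=\gamma=\delta=1$ its right-hand side is $0$, while $\mathsf g_{11,11}\notin\mathcal J$.

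Your two repairs therefore prove corrected statements, not this one.

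Route (a) amounts to replacing $\mathcal J$ by the ideal $\mathcal J_{\mathrm{nc}}$ of all $\mathsf p$ that vanish in every geodesic normal chart; this ideal contains the Gauss-lemma relations. $\mathcal J_{\mathrm{nc}}$ is $\Ogroup{n-1}$-stable, and any two normal charts at $q$ differ by an orthogonal linear map. Your identity then makes the equivalence essentially tautological. The price is a quotient that is no longer free, which affects how Weyl's theorem can be applied afterwards.

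Route (b) keeps $\mathcal J$ but strengthens the hypothesis to the invariance of Remark~\ref{rmk: strong invariance} (independence over all boundary normal coordinate systems), which $\mathsf P_1$ does satisfy. Under that hypothesis your argument goes through:
\begin{itemize}
\item Borel's lemma gives collar metrics and boundary normal charts with $g_{\alpha\beta}(q)=\delta_{\alpha\beta}$, $\partial_\gamma g_{\alpha\beta}(q)=0$, and arbitrary values of the $\mathcal G'$-variables.
\item Orthogonal linear changes preserve this class and act on jets by the given action.
\item So $\bar{\mathsf p}-A.\bar{\mathsf p}$ vanishes on all of $\mathbb R^{\mathcal G'}$ and is therefore $0$.
\end{itemize}

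What does not work is transferring mere normal-chart invariance to such charts. Such a chart agrees with a normal chart only up to a coordinate change $x\mapsto x+O(|x|^3)$, and that cubic term changes the second tangential jets of $g$ at $q$. This is exactly why $\mathsf g_{11,11}$ is invariant in the paper's sense but not in the strong sense.
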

Let $\mathcal Q$ denote the $\mathbb R$-subalgebra of $\mathcal P$ generated by
\[\mathsf{Rm}_{a_1b_1a_2b_2;c_1\cdots c_k},\ \mathsf h_{\alpha\beta;\gamma_1\cdots \gamma_l}\quad k, l\ge 0\]
of $\mathcal P$. A classical theorem in Riemannian geometry can be restated in the following way:

\begin{proposition}[{\cite[Theorem 1.1.3]{gilkeyAsymptoticFormulaeSpectral2004}}]
    Every element of $\mathcal O$ has a representative in $\mathcal Q$.
\end{proposition}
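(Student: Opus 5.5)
The plan is to prove that the natural map $\mathcal Q\to\mathcal O$ is surjective. Since $\mathsf G$ and $\mathsf G^{-1}$ both map to $1$ in $\mathcal O$, the algebra $\mathcal O$ is generated by the images of the variables $\mathsf g_{\alpha\beta,c_1\dots c_w}$. It therefore suffices to show that each such generator is congruent, modulo the relations defining $\mathcal O$, to a polynomial in the ambient curvature jets $\mathsf{Rm}_{a_1b_1a_2b_2;c_1\cdots c_k}$ and the second fundamental form jets $\mathsf h_{\alpha\beta;\gamma_1\cdots\gamma_l}$. I will argue by induction on the total order $w$, splitting each multi-index into its tangential indices $\gamma_1,\dots,\gamma_t$ and its normal indices (the number of $n$'s). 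For $w=0$ we have $\mathsf g_{\alpha\beta}\equiv\delta_{\alpha\beta}$. For $w=1$, a tangential derivative vanishes in $\mathcal O$, and the normal derivative is $\mathsf g_{\alpha\beta,n}=-2\mathsf h_{\alpha\beta}$, which already lies in $\mathcal Q$.

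\textbf{Normal derivatives.} Differentiate the defining relation $\mathsf h=-\tfrac12\partial_n\mathsf g$ repeatedly in $x^n$. In boundary normal coordinates the Riccati (Gauss--Weingarten) identity has the form
\[
\partial_n\mathsf h_{\alpha\beta}=\mathsf{Rm}^\bot_{\alpha\beta}+\mathsf h^2_{\alpha\beta}
\]
up to sign conventions. Its higher normal derivatives express $\partial_n^j\mathsf h$ through $\mathsf{Rm}^\bot_{;n\cdots n}$, through lower-order terms, and through Christoffel symbols. These Christoffel symbols are themselves built from $\mathsf g_{\alpha\beta,n}$ and from tangential first derivatives, which vanish in $\mathcal O$. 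Hence every pure normal derivative $\mathsf g_{\alpha\beta,n\cdots n}$ lies in the image of $\mathcal Q$ by induction.

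\textbf{Tangential derivatives.} Here I invoke the classical fact \cite[Theorem 1.1.3]{gilkeyAsymptoticFormulaeSpectral2004}: in geodesic normal coordinates of $(\Sigma,g_t)$ centered at $q$, every tangential jet $\partial^J_x g_{\alpha\beta}(q)$ is a universal polynomial in $\mathsf{Rm}^\top$ and its tangential covariant derivatives. For a mixed jet $\mathsf g_{\alpha\beta,\gamma_1\dots\gamma_t n\dots n}$, commute the tangential derivatives past the normal ones. This turns the mixed jet into tangential covariant derivatives of the normal jets already handled, namely $\mathsf h$ and its normal derivatives, plus Christoffel correction terms of lower order. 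The Gauss equation \eqref{eq: Gauss equation} then converts $\mathsf{Rm}^\top$ into $\mathsf{Rm}$ plus quadratic terms in $\mathsf h$. The Codazzi equation \eqref{eq: Codazzi equation}, together with the Bianchi identities \eqref{eq: 1st Bianchi equation} and \eqref{eq: 2nd Bianchi equation}, converts tangential covariant derivatives into the ambient ones appearing in $\mathcal Q$. Every identity used here holds on all manifolds in every boundary normal chart, so Proposition \ref{prop: faithful} lifts it to an identity of formal polynomials in $\mathcal P$.

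\textbf{Main obstacle.} The delicate step is the tangential one. The ideal $\mathcal J$ imposes only the first-order normalization $\mathsf g_{\alpha\beta,\gamma}=0$, whereas Gilkey's expansion also uses the higher normal-coordinate conditions, such as the vanishing of the fully symmetrized tangential jets at the center. I will first check carefully that these symmetrized parts never contribute, because every quantity we apply the proposition to (in particular, every invariant $\mathsf p$ in the sense of Proposition \ref{prop: evaluation invariance is algebraic invariance}) is evaluated only in geodesic normal charts. If this check fails, I will read the statement with $\mathcal J$ implicitly enlarged by all relations that hold in such charts, which is exactly the setting of \cite{gilkeyAsymptoticFormulaeSpectral2004}. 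The remaining bookkeeping consists of tracking lower-order Christoffel terms through the induction on $w$.
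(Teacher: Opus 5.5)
\textbf{Gap: the tangential step, and the statement as written is false.} The check you postpone to your last paragraph cannot succeed. With $\mathcal J$ as defined, the statement is false, so no argument can prove it.

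The ideal $\mathcal J$ is homogeneous in the total jet order, and $\mathcal O\cong\mathbb R[\mathcal G']$ is a polynomial ring. For $n=3$, the nine variables $\mathsf g_{\alpha\beta,\gamma\delta}$ are algebraically independent in it.

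Each generator of $\mathcal Q$ is homogeneous: $\mathsf h_{\alpha\beta;\gamma_1\cdots\gamma_l}$ has order $l+1$ and $\mathsf{Rm}_{abcd;c_1\cdots c_k}$ has order $k+2$. So the order-two part of the image of $\mathcal Q$ is spanned by the images of $\mathsf{Rm}_{abcd}$, $\mathsf h_{\alpha\beta;\gamma}$ and $\mathsf h_{\alpha\beta}\mathsf h_{\gamma\delta}$. Its bidegree-$(2,0)$ component comes only from
$\mathsf{Rm}_{\alpha\beta\gamma\delta}\equiv\tfrac12(\mathsf g_{\alpha\delta,\beta\gamma}+\mathsf g_{\beta\gamma,\alpha\delta}-\mathsf g_{\alpha\gamma,\beta\delta}-\mathsf g_{\beta\delta,\alpha\gamma})+(\text{bidegree }(0,2))$.
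For $n-1=2$ this spans only the line through $2\mathsf g_{12,12}-\mathsf g_{11,22}-\mathsf g_{22,11}$. Hence $\mathsf g_{11,11}$ has no representative in $\mathcal Q$. Neither does the $\Ogroup{2}$-fixed element $\mathsf g_{\alpha\alpha,\gamma\gamma}$.

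The false step in your outline is the claim that every identity used \emph{holds on all manifolds in every boundary normal chart}, so that Proposition~\ref{prop: faithful} lifts it to $\mathcal P$. Gilkey's expansion of tangential jets in terms of $\mathsf{Rm}^\top$ holds only when the tangential chart is geodesic normal for $g_t$ at $q$. For example, $\mathsf g_{11,11}$ vanishes in such charts by the Gauss lemma, but not in general boundary normal charts. So the expansion is a congruence modulo a larger ideal $\mathcal K\supset\mathcal J$, namely all metric polynomials vanishing on jets taken in such charts. It is not a congruence modulo $\mathcal J$.

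\textbf{What can actually be proved.} Your fallback is therefore not optional; it is the only provable version: every element of $\mathcal P/\mathcal K$ has a representative in $\mathcal Q$. For that statement your outline is sound:
Riccati for pure normal jets;
Gilkey's normal-coordinate theorem for pure tangential jets at level $t$;
for mixed jets, rewriting them as tangential covariant derivatives of $\mathsf h$ and its normal derivatives, then applying Gauss--Codazzi--Bianchi, with the Christoffel bookkeeping you describe.

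The paper gives no argument of its own here; it only cites Gilkey, whose theorem is exactly the geodesic-normal-coordinate statement. The paper's congruence \eqref{eq: g (2,0)} \emph{mod $\mathcal J$}, proved in Appendix~\ref{apd: computation} from the normal-coordinate Taylor expansion, is likewise valid only modulo $\mathcal K$. The same enlargement is implicitly needed elsewhere the paper works in $\mathcal O$, for example in Proposition~\ref{prop: evaluation invariance is algebraic invariance}: $\mathsf g_{11,11}$ is invariant in the evaluation sense but is not $\Ogroup{2}$-fixed in $\mathcal O$.
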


In particular, we have
\begin{align}
    \label{eq: g (2,0)}
    \mathsf g_{\alpha\beta,\gamma\delta} & \equiv \frac{1}{3}(\mathsf{Rm}^\top_{\alpha\gamma\delta\beta} + \mathsf{Rm}^\top_{\alpha\delta\gamma\beta})\mod\mathcal J, \\
    \label{eq: g (0,2)}
    \mathsf g_{\alpha\beta,nn} & = 2(\mathsf g^{\gamma\delta}\mathsf h_{\alpha\gamma}\mathsf h_{\beta\delta} - \mathsf {Rm}_{\alpha\beta}^\bot),\\
    \label{eq: g (0,3)}
    \mathsf g_{\alpha\beta,nnn} & = 4\mathsf g^{\gamma\delta}\bigl(\mathsf h_{\alpha\gamma}\mathsf {Rm}^\bot_{\beta\delta}+\mathsf h_{\beta\gamma}\mathsf {Rm}^\bot_{\alpha\delta}\bigr)-2\mathsf {Rm}^\bot_{\alpha\beta;n},\\
    \label{eq: g (2,1)}
    \mathsf g_{\alpha\beta,\gamma\delta n} & \equiv -2\mathsf h_{\alpha\beta:\gamma\delta} + 2\sum_\epsilon\mathsf h_{\beta\epsilon}\mathsf g_{\gamma\alpha,\epsilon\delta} + 2\sum_\epsilon\mathsf h_{\alpha\epsilon}\mathsf g_{\gamma\beta,\epsilon\delta}\mod\mathcal J.
\end{align}

We will use the following lemma to reduce computations in $\mathcal P$ to computations in $\mathcal O$:

\begin{lemma}
    \label{lem: independent of phi}
    Let $\mathsf p\in\mathcal J\subset\mathcal P$. If for any Riemannian manifold $(\Omega,g)$ with an $\varepsilon$-collar, the evaluation $\mathsf p(\phi,g)(q,t)$ is independent of $\phi$, for all $(q,t)\in\partial\Omega\times[0,\varepsilon)$, then $\mathsf p = 0$.
\end{lemma}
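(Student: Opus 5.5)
The plan is to combine the hypothesis of $\phi$-independence with the fact that every generator of $\mathcal J$ vanishes at the center of a suitably chosen chart, and then to invoke Proposition \ref{prop: faithful}.

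First, the evaluation map $\mathsf p\mapsto \mathsf p(\phi,g)$ is a ring homomorphism $\mathcal P\to C^\infty(U\times[0,\varepsilon))$. Indeed, it is defined on the generators $\mathcal G$ and $\mathsf G$ of $\mathbb R[\mathcal G,\mathsf G]$. It respects the relation $\mathsf G^2=\det\mathsf g$, because $\mathsf G$ is sent to $\sqrt{\det(g_{x^n})_{\alpha\beta}}$. It sends $\mathsf G$ to a nowhere-vanishing function, so it extends to the localization. Since $\mathsf p\in\mathcal J$, we can write
\[
\mathsf p=\mathsf a_0(\mathsf G-1)+\sum_{\alpha,\beta}\mathsf a_{\alpha\beta}(\mathsf g_{\alpha\beta}-\delta_{\alpha\beta})+\sum_{\alpha,\beta,\gamma}\mathsf a_{\alpha\beta\gamma}\,\mathsf g_{\alpha\beta,\gamma}
\]
with $\mathsf a_0,\mathsf a_{\alpha\beta},\mathsf a_{\alpha\beta\gamma}\in\mathcal P$. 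Hence
\[
\mathsf p(\phi,g)=\mathsf a_0(\phi,g)\bigl(\mathsf G(\phi,g)-1\bigr)+\dots
\]
pointwise, and it suffices to make the three kinds of generators vanish at a given point.

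Next, fix a manifold $(\Omega,g)$ with an $\varepsilon$-collar and a point $(q,t)\in\partial\Omega\times[0,\varepsilon)$. Choose $\phi_0=(x^1,\dots,x^{n-1})$ to be geodesic normal coordinates of the metric $g_t$ on $\Sigma$, centered at $q$, and use the associated boundary normal coordinate system. The generators $\mathsf g_{\alpha\beta}$, $\mathsf G$, $\mathsf g_{\alpha\beta,\gamma}$ evaluate respectively to $(g_t)_{\alpha\beta}$, $\sqrt{\det g_t}$ and $\partial_{x^\gamma}(g_t)_{\alpha\beta}$. Note that only tangential derivatives enter here, and they are taken at the fixed level $x^n=t$. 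At the center $q$ of normal coordinates these quantities equal $\delta_{\alpha\beta}$, $1$ and $0$. Therefore every generator of $\mathcal J$ evaluates to zero at $(q,t)$, and so $\mathsf p(\phi_0,g)(q,t)=0$.

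By hypothesis, $\mathsf p(\phi,g)(q,t)$ does not depend on $\phi$. It follows that $\mathsf p(\phi,g)(q,t)=0$ for every boundary normal coordinate system $\phi$ whose domain contains $q$. Since $(q,t)$ was arbitrary, $\mathsf p(\phi,g)\equiv0$ for every such $(\Omega,g)$ and $\phi$, and Proposition \ref{prop: faithful} then gives $\mathsf p=0$.

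The only point requiring care is to read the hypothesis correctly. The $\phi$-independence must be applied to the comparison between an arbitrary chart and a chart chosen depending on the point $(q,t)$, namely normal coordinates for $g_t$ rather than for $g_0$. One also has to check that the evaluation of $\mathsf g_{\alpha\beta,\gamma}$ involves only $x^\gamma$-derivatives with $\gamma\le n-1$, so that normal coordinates of $g_t$ kill it. This holds by the definition of the evaluation map, so no real obstacle is expected.
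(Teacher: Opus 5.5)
Your argument is correct and is the same as the paper's. You evaluate at $(q,t)$ in geodesic normal coordinates of $(\Sigma,g_t)$ centered at $q$ (the paper's chart $\phi_{\mathrm e}$), where every generator of $\mathcal J$ vanishes, use the $\phi$-independence hypothesis to carry this zero over to every boundary normal chart, and conclude with Proposition~\ref{prop: faithful}; your extra check that evaluation is a ring homomorphism on the localization $\mathcal P$ simply makes explicit a step the paper leaves implicit.
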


See Appendix \ref{apd: invariance} for the proof of Proposition \ref{prop: evaluation invariance is algebraic invariance} and \ref{lem: independent of phi}. See Appendix \ref{apd: computation} for the proof of formulas \eqref{eq: g (2,0)}--\eqref{eq: g (2,1)}.

\subsection{The algebra $\mathcal{SP}$ and its subspaces}
Given any formal variable $\mathsf g_{\alpha\beta,c_1\dots c_w}$, we define its tangential order and normal order as 
\[\aligned 
\ord^\top(\mathsf g_{\alpha\beta,c_1\dots c_w})& =\#\{j|1\le j\le w,c_j<n\},
\\
\ord^\bot(\mathsf g_{\alpha\beta,c_1\dots c_w})&=\#\{j|1\le j\le w,c_j=n\}
\endaligned\]
respectively. The orders of a monomial are then defined naturally. Let $\mathcal P_{w^\top,w^\bot}$ be the space spanned by monomials of bi-degree $(w^\top,w^\bot)$. Then 
\[  \mathcal P_{w_1^\top,w_1^\bot}\cdot\mathcal P_{w_2^\top,w_2^\bot}\subset \mathcal P_{w_1^\top + w_2^\top,w_1^\bot + w_2^\bot}.\]
This equips the algebra $\mathcal P$ with a bi-grading  
\[\mathcal P = \bigoplus_{w^\top,w^\bot\in\mathbb N_0}\mathcal P_{w^\top,w^\bot}.\]

Since the symbol of a pseudodifferential operator is a function on the cotangent bundle, we introduce 
$\eta_1,\dots,\eta_{n-1},\rho$
to encode cotangent variables and their norm. Let $\mathcal S$ be the $\mathbb R$-algebra generated by formal variables $\eta_1,\dots,\eta_{n-1},\rho$, localized at $\rho$, i.e.
\[\mathcal S = \mathbb R[\eta_1,\dots,\eta_{n-1},\rho,\rho^{-1}]/(\rho\rho^{-1}-1).\]

For any $\gamma=1,\dots,n-1$, define a derivation  $D_\gamma: \mathcal S\otimes\mathcal P \to \mathcal S\otimes\mathcal P$ by  letting  $D_\gamma=0$ on $\mathcal P$ and letting 
\[D_\gamma\eta_\alpha = \delta_{\alpha\gamma},\quad D_\gamma\rho = -\rho^{-1}\eta_\alpha\mathsf g^{\alpha\gamma}.\] 
Similarly,   extend the derivation $\partial_c$ on $\mathcal P$ to  $\partial_c: \mathcal S\otimes\mathcal P \to \mathcal S\otimes\mathcal P$ by  letting 
\[\partial_c\rho = -\frac{1}{2}\rho^{-1}\eta_{\alpha}\eta_{\beta} \partial_c\mathsf g^{\alpha\beta},\quad \partial_c \eta_\alpha = 0.\]

For $m\in\mathbb Z$ and $k \in \mathbb N_0$, we denote
\[\mathcal S^{m,k}   = \Span_\mathbb R\{\rho^m\eta^J\colon|J| = k\}.\]
To describe the full symbol of the Dirichlet-to-Neumann operator, for any   $(w^\top,w^\bot)\in\mathbb N_0^2$ we set
\begin{align*} 
    \mathcal S\mathcal P_{w^\top,w^\bot}^m   = \bigoplus_{0\le k\le w^\bot}\mathcal S^{m-k,k}\otimes\mathcal P_{w^\top + k,w^\bot - k}.
\end{align*}
The following proposition shows that these spaces together form a graded $\mathbb R$-algebra $\mathcal{SP}$, which will be useful in the computations below. Note that $\mathcal {SP}$ is isomorphic to $\mathcal S\otimes\mathcal P$ as a $\mathbb R$-algebra, but equipped with a grading.
\begin{proposition}\label{prop:thespaceSP}
	\begin{enumerate}
		\item $\mathcal{SP}_{w_1^\top,w_1^\bot}^{m_1}\cdot\mathcal{SP}_{w_2^\top,w_2^\bot}^{m_2}\subset\mathcal{SP}_{w_1^\top+w_2^\top,w_1^\bot+w_2^\bot}^{m_1+m_2}$.
		\item For any multi-index $J$, the ``tangential derivations” satisfy 
		\begin{equation*}
			D^J(\mathcal{SP}_{w^\top,w^\bot}^m) \subset \mathcal{SP}_{w^\top-|J|,w^\bot+|J|}^{m-|J|},\quad \partial^J(\mathcal{SP}_{w^\top,w^\bot}^m) \subset \mathcal{SP}_{w^\top+|J|, w^\bot}^m,
		\end{equation*}
		\item The ``normal derivation” satisfies   
		\begin{align*}
			\partial_n(\mathcal{SP}_{w^\top,w^\bot}^m) \subset \mathcal{SP}_{w^\top,w^\bot+1}^m.
		\end{align*}
	\end{enumerate}
\end{proposition}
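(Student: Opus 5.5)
The statement is purely about bookkeeping of bi-degrees, so the plan is to reduce each claim to a check on generators and then propagate it by the Leibniz rule. Since $\mathcal{SP}^m_{w^\top,w^\bot}$ is spanned by elements $\rho^{m-k}\eta^J\otimes\mathsf p$ with $|J|=k\le w^\bot$ and $\mathsf p\in\mathcal P_{w^\top+k,w^\bot-k}$, it suffices to verify everything on such spanning elements. One preliminary point must be settled first: $\mathsf G$, $\mathsf G^{-1}$ and $\mathsf g^{\alpha\beta}$ lie in $\mathcal P_{0,0}$, and the relation $\mathsf G^2=\det\mathsf g$ is homogeneous of bi-degree $(0,0)$. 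Hence the bi-grading on $\mathcal P$ is well defined, and degree-$(0,0)$ factors do not affect orders.

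For (1), take two spanning elements with indices $(m_1,k_1,w_1^\top,w_1^\bot)$ and $(m_2,k_2,w_2^\top,w_2^\bot)$. Their product is $\rho^{m_1+m_2-(k_1+k_2)}\eta^{J_1+J_2}\otimes\mathsf p_1\mathsf p_2$. Here $|J_1+J_2|=k_1+k_2\le w_1^\bot+w_2^\bot$, and $\mathsf p_1\mathsf p_2\in\mathcal P_{(w_1^\top+w_2^\top)+(k_1+k_2),(w_1^\bot+w_2^\bot)-(k_1+k_2)}$ by the multiplicativity of the bi-grading on $\mathcal P$. So the product lies in the required summand.

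For (2), by induction on $|J|$ it suffices to treat a single $D_\gamma$ and a single $\partial_\gamma$, using the fact that $D_\gamma$ and $\partial_\gamma$ are derivations together with (1). For $D_\gamma$, I compute it on generators. $D_\gamma\eta_\alpha=\delta_{\alpha\gamma}$ sends $\mathcal S^{0,1}\otimes\mathcal P_{0,0}$, which is $\mathcal{SP}^1_{-1,1}$-like data, down by one in $\eta$-degree. $D_\gamma\rho=-\rho^{-1}\eta_\alpha\mathsf g^{\alpha\gamma}$ lies in $\mathcal S^{-1,1}\otimes\mathcal P_{0,0}$. I then verify directly on $\rho^{m-k}\eta^J\mathsf p$. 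The term hitting $\eta^J$ gives $\rho^{m-k}\eta^{J'}\mathsf p$ with $|J'|=k-1$. The term hitting $\rho^{m-k}$ gives $\rho^{m-k-2}\eta^{J+e_\alpha}\mathsf g^{\alpha\gamma}\mathsf p$ with $\eta$-degree $k+1$. In both cases I write the result as $\rho^{(m-1)-k'}\eta^{J''}\otimes\mathsf q$ and check that $\mathsf q$ has bi-degree $((w^\top-1)+k',(w^\bot+1)-k')$ and that $k'\le w^\bot+1$.

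\begin{itemize}
\item In the first case $k'=k-1$ and $\mathsf q=\mathsf p$, which is consistent.
\item In the second case $k'=k+1\le w^\bot+1$ and $\mathsf q=\mathsf g^{\alpha\gamma}\mathsf p$, with the same bi-degree $(w^\top+k,w^\bot-k)$. This is also consistent.
\end{itemize}

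For $\partial_\gamma$ with $\gamma<n$, each generator $\mathsf g_{\cdot,c}$ gains one tangential order. $\partial_\gamma\mathsf G$ and $\partial_\gamma\mathsf g^{\alpha\beta}$ lie in $\mathcal P_{1,0}$, because they are built from $\mathsf g_{\cdot,\gamma}$ times degree-$(0,0)$ factors. Also $\partial_\gamma\rho=-\frac12\rho^{-1}\eta\eta\,\partial_\gamma\mathsf g^{\alpha\beta}$, which I check lies in $\mathcal{SP}^1_{1,0}$ with $k=0$ after absorbing the $\eta$ factors. I would double-check the last point carefully. $\rho^{-1}\eta_\alpha\eta_\beta$ has $\rho$-power $-1$ and $\eta$-degree $2$, so it counts as $\mathcal S^{1-2,2}$ and requires $\mathcal P$-degree $(w^\top+2,w^\bot-2)$. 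That fails for $w^\bot=0$. The fix is to observe that $\rho^{-2}\eta_\alpha\eta_\beta\mathsf g^{\alpha\beta}=1$, which suggests treating the quadratic $\eta$ factors as homogeneous of degree zero. Concretely, in $\rho^{m-k}\eta^J\mathsf p$ the derivative $\partial_\gamma$ of $\rho^{m-k}$ gives $\rho^{m-k-2}\eta_\alpha\eta_\beta\eta^J\,\partial_\gamma\mathsf g^{\alpha\beta}\mathsf p$. Its $\eta$-degree rises by $2$, which could exceed $w^\bot$.

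\textbf{This is the main obstacle.} I would resolve it either by reading $\mathcal S^{m,k}$ modulo the relation $\rho^2=\mathsf g^{\alpha\beta}\eta_\alpha\eta_\beta$, or by rewriting $\rho^{-2}\eta_\alpha\eta_\beta\,\partial_\gamma\mathsf g^{\alpha\beta}$ as a degree-zero symbol. If neither works, I would restrict the claim to the subspaces actually generated in the symbol computation.

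(3) is analogous. Each normal derivative raises the normal order by one. $\partial_n\rho$ produces the term $\rho^{-1}\eta\eta\,\mathsf g_{\cdot,n}$, and the same caveat about $\eta$-degree applies. Here the extra $\eta$-degree of $2$ must again be absorbed, using the same device as in (2).
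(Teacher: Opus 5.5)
\textbf{There is a genuine gap: as written, the proposal does not prove (2) or (3).} Your treatment of (1) is fine. Your plan of checking $D_\gamma$ and $\partial_c$ on spanning elements $\rho^{m-|J|}\eta^J\mathsf p$ and propagating by Leibniz is exactly the paper's argument.

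\emph{Your first $D_\gamma$ case is not consistent.} The term $j_\gamma\rho^{m-k}\eta^{J-\mathbf 1_\gamma}\mathsf p$ has $k'=k-1$, so it would have to lie in $\mathcal S^{m-k,k-1}\otimes\mathcal P_{w^\top+k-2,\,w^\bot-k+2}$. But $\mathsf p\in\mathcal P_{w^\top+k,\,w^\bot-k}$. For instance, $\eta_1\mathsf g_{11,11}\in\mathcal{SP}^1_{1,1}$, yet $D_1(\eta_1\mathsf g_{11,11})=\mathsf g_{11,11}\notin\mathcal{SP}^0_{0,2}$.

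\emph{The obstruction you flag for $\partial_c$ falling on $\rho^{m-k}$ is genuine}, since already $\partial_1\rho\notin\mathcal{SP}^1_{1,0}$. Your remedies do not remove it:
\begin{itemize}
\item Under $\eta_\alpha\mapsto i\xi_\alpha$ the relation is $\rho^2=-\mathsf g^{\alpha\beta}\eta_\alpha\eta_\beta$ (note the sign). It links $\eta$-degrees $0$ and $2$ at the same $\mathcal P$-bidegree, so it is not homogeneous for the printed grading.
\item The offending factor $\eta_\alpha\eta_\beta\,\partial_c\mathsf g^{\alpha\beta}$ is contracted against a derivative of the inverse metric, so no such relation lowers its $\eta$-degree.
\item Restricting to the subspaces actually produced by the recursion does not help either. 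Already $\mathsf s_0$ contains $\tfrac14\rho^{-3}\eta_\alpha\eta_\beta\eta_\delta\,\mathsf g^{\alpha\gamma}\partial_\gamma\mathsf g^{\beta\delta}$ (from $D_\gamma\mathsf s_1\,\partial_\gamma\mathsf s_1$), whose $\eta$-degree $3$ exceeds $w^\bot=1$.
\end{itemize}

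\textbf{What is missing.} In every failing term, the $\mathcal P$-bidegree is off by a multiple of $(2,-2)$ relative to the $\eta$-degree. So your instinct that a pair of extra $\eta$'s should cost nothing is right, but it must be implemented as a coarser grading, not as a rewriting. Declare $\rho^{m-|J|}\eta^J\mathsf p$, with $\mathsf p\in\mathcal P_{t,\nu}$, to lie in $\mathcal{SP}^m_{w^\top,w^\bot}$ (with $w^\top,w^\bot\in\mathbb Z$) when $t+\nu=w^\top+w^\bot$ and $t\equiv w^\top+|J|\pmod 2$. Then (1) is immediate, and each of your four terms lands correctly by inspection, using $\mathsf g^{\alpha\beta},\mathsf G^{\pm1}\in\mathcal P_{0,0}$:
\begin{itemize}
\item $D_\gamma$ changes $|J|$ by $\pm1$ and $w^\top$ by $-1$.
\item $\partial_\gamma$ changes $|J|$ by $0$ or $2$, and both $t$ and $w^\top$ by $+1$.
\item $\partial_n$ changes $|J|$ by $0$ or $2$, and both $\nu$ and $w^\bot$ by $+1$.
\end{itemize}
This parity is also exactly what is used downstream: odd $|J|$ integrates to zero under $I$, which forces even tangential order in $\widetilde{\mathcal P}_w$.

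The paper's proof carries out the same generator computation and states the inclusions ``after relabeling indices'' without performing this bidegree check. So the printed definition of $\mathcal{SP}^m_{w^\top,w^\bot}$ must be read in this parity sense for either argument to close.
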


\begin{proof} 
(1) follows from the definition.  In what follows, $\mathbf 1_\alpha$ denotes the multi-index with a $1$ in the $\alpha$-th position and zeros elsewhere. Let 
	\[\mathsf s = \rho^{m-|J|}\eta^J\mathsf p\in\mathcal{SP}_{w^\top,w^\bot}^m.\] For any $\gamma=1,\cdots,n-1$, one has
	\begin{align*}
		D_{\gamma}\mathsf s
		& =  -(m -|J|)\rho^{ m - 2 - |J|} \eta_\alpha\eta^J(\mathsf g^{\alpha\gamma}  \mathsf p) +  j_\gamma \rho ^{m - |J|}\eta_\gamma^{-1}\eta^J \mathsf p\\
		& =  -(m -|J|)\rho ^{ m - 1 - |J+\mathbf 1_\alpha|}\eta^{J+\mathbf 1_\alpha}(  \mathsf g^{\alpha\gamma}\mathsf p) + j_\gamma \rho ^{m -1 - |J-\mathbf 1_\gamma|}\eta^{J-\mathbf 1_\gamma}\mathsf p.
	\end{align*}
	For $c=1,\dots,n$, one has
	\begin{align*}
		\partial_{c} \mathsf s 
		& = -\frac{m -|J|}{2}\rho ^{ m - 2 - |J|} \eta_\beta \eta_\gamma\eta^J (\partial_c  \mathsf g^{\beta\gamma}\mathsf p) + \rho^{m -|J|}\eta^J (\partial_c\mathsf p)\\
		& = -\frac{m -|J|}{2}\rho^{m - |J + \mathbf 1_\beta + \mathbf 1_\gamma|}\eta^{J + \mathbf 1_\beta + \mathbf 1_\gamma}(\partial_c\mathsf g^{\beta\gamma}\mathsf p) + \rho^{m -|J|}\eta^J (\partial_c\mathsf p)
	\end{align*}
	Note that $\mathsf g^{\alpha\beta} = \mathsf G^{-2}  \mathsf g_{\alpha\beta}^*\in\mathcal P_{0,0}$. After relabeling indices, we conclude from this computation that 
	\begin{equation}
		\label{eq: D_gamma and partial_gamma}
		D_\gamma(\mathcal{SP}_{w^\top,w^\bot}^m) \subset \mathcal{SP}_{w^\top-1,w^\bot+1}^{m-1},\quad \partial_\gamma(\mathcal{SP}_{w^\top,w^\bot}^m) \subset \mathcal{SP}_{w^\top+1,w^\bot}^m,
	\end{equation}
	and $\partial_n(\mathcal{SP}_{w^\top,w^\bot}^m) \subset \mathcal{SP}_{w^\top,w^\bot+1}^m$, which gives (3).
	Repeated use of \eqref{eq: D_gamma and partial_gamma} gives (2). 
\end{proof}

 \subsection{Invariants of given type}

 We will use a refined notion of the grading on $\mathcal P$. Given  $r\in\mathbb Z$ and any variables $\mathsf m_j$ ($1 \le j \le d$) in $\mathcal G$, define the {\em  type} of the metric polynomial  $\mathsf p = \mathsf G^{-r}\prod_{j=1}^d\mathsf m_j\in\mathcal P$ to be the multiset 
 \[T = \{(\ord^\top(\mathsf m_j),\ord^\bot(\mathsf m_j))\mid j=1,\dots,d\}\] 
 of pairs, and define the type of a scalar as $(0,0)$. Let $\mathcal P_T$ denote the subspace of $\mathcal P$ spanned by all the polynomials of type $T$. It is straightforward to verify that if $T \neq \widetilde T$, then   $\mathcal P_T \cap \mathcal P_{\widetilde T} = \{0\}$. Therefore we have a direct sum decomposition of $\mathcal P$ into $\mathbb R$-vector spaces $\mathcal P = \bigoplus_{T\in\mathcal T}\mathcal P_T$, where $\mathcal T$ denotes the set of all types.   Let $\Proj_T$ be the projection from $\mathcal P$ onto $\mathcal P_T$. 
 This construction induces a decomposition of $\mathcal S$-modules $\mathcal {SP}\simeq\mathcal S\otimes \mathcal P = \bigoplus_{T\in\mathcal T}\mathcal S\otimes\mathcal P_T$. The resulting projections are also denoted by $\Proj_T$. Moreover, each $\mathcal P_T$ is invariant under the $\Ogroup{n-1}$-action since the action preserves the grading of $\mathcal P$.

For any type $T$, let $\mathcal O_T$ be the image of   $\mathcal P_T$ under the quotient $\mathcal P \to \mathcal O=\mathcal P/\mathcal J$. 
For $w\in\mathbb N$, let $\widetilde{\mathcal P}_w$ be the subspace $\bigoplus_{0\le j\le [w/2]}\mathcal P_{2j,w-2j}$, and let  $\widetilde{\mathcal O}_w$ denote the image of $\widetilde{\mathcal P}_w$ under the quotient map $\mathcal P\to\mathcal O$.
When $n=3$ and $w=3$, the direct sum decomposition of $\mathcal P$ induces a decomposition of $\widetilde{\mathcal O}_3$ as 
\[\widetilde{\mathcal O}_3 = \mathcal O_{(0,1)(2,0)} + \mathcal O_{(2,1)} + \mathcal O_{(0,1)(0,1)(0,1)} + \mathcal O_{(0,1)(0,2)} + \mathcal O_{(0,3)}.\]
For a type $T$, let $\mathcal I_T$ denote the space consisting of the images of  all the invariant polynomials in $\mathcal O_T$.
 
\begin{lemma}\label{lem: basis of I_T}
    The space $\mathcal I_T$ is spanned by
 	\begin{enumerate}[(1)]
 		\item $\mathsf H_1\mathsf R^\top$, if $T = (0,1)(2,0)$;
 		\item $\mathsf{\Delta^\top H_1}, \mathsf H_1\mathsf R^\top, \partial_n\mathsf R$, if $T = (2,1)$;
 		\item $\mathsf H_1^3, \mathsf H_1\mathsf H_2$, if $T = (0,1)(0,1)(0,1)$;
 		\item $\mathsf H_1^3,\mathsf H_1\mathsf H_2, \mathsf H_1\mathsf{Ric}^\bot, \langle \mathsf h, \mathsf {Rm}^\bot\rangle$, if $T = (0,1)(0,2)$;
 		\item $\mathsf H_1\mathsf{Ric}^\bot, \mathsf \langle \mathsf h, \mathsf {Rm}^\bot\rangle, \partial_n\mathsf{Ric}^\bot$, if $T = (0,3)$;
 	\end{enumerate}
\end{lemma}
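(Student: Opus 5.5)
The plan is to reduce the statement to classical invariant theory for $\Ogroup{2}$ acting on tensors on $\mathbb R^2$, and then to translate each resulting contraction into geometric quantities.

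\emph{Reduction.} By Proposition \ref{prop: evaluation invariance is algebraic invariance}, $\mathcal I_T$ is the space of $\Ogroup{2}$-fixed vectors in $\mathcal O_T$. Modulo $\mathcal J$ we have $\mathsf G\equiv 1$, $\mathsf g^{\alpha\beta}\equiv\delta_{\alpha\beta}$ and $\mathsf g_{\alpha\beta,\gamma}\equiv 0$. Hence $\mathcal O_T$ is spanned by products of the variables $\mathsf g_{\alpha\beta,c_1\dots c_w}$ whose bi-orders are prescribed by $T$. On these variables $\Ogroup{2}$ acts as on tensors in the tangential indices; normal indices $n$ are inert.

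By Weyl's first fundamental theorem for $\Ogroup{2}$, every invariant multilinear expression in such tensors is a linear combination of complete contractions with $\delta$. One might worry that $\Ogroup{2}$ admits no $\varepsilon$-tensor invariants; this is automatic here, since the group is the full orthogonal group. So for each type I will list the finitely many complete contractions and express each one through the listed invariants. The following relations are used throughout:
\begin{itemize}
\item $\mathsf g_{\alpha\beta,n}=-2\mathsf h_{\alpha\beta}$;
\item \eqref{eq: g (2,0)}, together with the fact that in dimension $2$ we have $\mathsf{Rm}^\top_{\alpha\beta\gamma\delta}=\tfrac12\mathsf R^\top(\delta_{\alpha\gamma}\delta_{\beta\delta}-\delta_{\alpha\delta}\delta_{\beta\gamma})$;
\item \eqref{eq: g (0,2)}, \eqref{eq: g (0,3)} and \eqref{eq: g (2,1)}.
\end{itemize}

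\emph{Cases (1), (3), (4), (5).}

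For $T=(0,1)(2,0)$, the factor $\mathsf g_{\alpha\beta,\gamma\delta}$ is congruent to $\mathsf R^\top$ times a fixed $\delta\delta$-tensor. Every complete contraction with $\mathsf h$ is therefore a multiple of $\operatorname{tr}\mathsf h\cdot\mathsf R^\top$, i.e. of $\mathsf H_1\mathsf R^\top$.

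For $T=(0,1)^3$, the invariants are $(\operatorname{tr}\mathsf h)^3$, $\operatorname{tr}\mathsf h\operatorname{tr}\mathsf h^2$ and $\operatorname{tr}\mathsf h^3$. By Cayley--Hamilton for $2\times2$ matrices, all three lie in the span of $\mathsf H_1^3$ and $\mathsf H_1\mathsf H_2$.

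For $T=(0,1)(0,2)$, the invariants are $\operatorname{tr}\mathsf h\operatorname{tr}B$ and $\operatorname{tr}(\mathsf hB)$ with $B=\mathsf g_{\cdot\cdot,nn}=2(\mathsf h^2-\mathsf{Rm}^\bot)$. This gives combinations of $\mathsf H_1^3$, $\mathsf H_1\mathsf H_2$, $\mathsf H_1\mathsf{Ric}^\bot$ and $\langle\mathsf h,\mathsf{Rm}^\bot\rangle$; here we use $\operatorname{tr}\mathsf{Rm}^\bot=\mathsf{Ric}^\bot$ and Cayley--Hamilton again.

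For $T=(0,3)$, the only invariant is $\operatorname{tr}\mathsf g_{\cdot\cdot,nnn}$. By \eqref{eq: g (0,3)} it equals $8\langle\mathsf h,\mathsf{Rm}^\bot\rangle-2\,\delta^{\alpha\beta}\mathsf{Rm}^\bot_{\alpha\beta;n}$. Expanding the covariant derivative, with $\mathsf\Gamma^\beta_{n\alpha}=-\mathsf h_{\alpha\beta}$ and $\partial_n\mathsf g^{\alpha\beta}=2\mathsf h_{\alpha\beta}$ mod $\mathcal J$, rewrites $\delta^{\alpha\beta}\mathsf{Rm}^\bot_{\alpha\beta;n}$ as $\partial_n\mathsf{Ric}^\bot$ plus multiples of $\langle\mathsf h,\mathsf{Rm}^\bot\rangle$. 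The only other Christoffel terms involve $\mathsf{Rm}_{\alpha n\beta\gamma}$ contracted into a type other than $(0,3)$, and these vanish after applying $\Proj_T$.

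\emph{Case (2), the main obstacle.} For $T=(2,1)$, the variable $\mathsf g_{\alpha\beta,\gamma\delta n}$ is symmetric in $(\alpha\beta)$ and in $(\gamma\delta)$. So there are only two contractions, $P_1=\mathsf g_{\alpha\alpha,\gamma\gamma n}$ and $P_2=\mathsf g_{\alpha\gamma,\alpha\gamma n}$. I must show that both lie in the span of $\mathsf{\Delta^\top H_1}$, $\mathsf H_1\mathsf R^\top$ and $\operatorname{Proj}_{(2,1)}\partial_n\mathsf R$.

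\begin{itemize}
\item \emph{$P_1$.} Using \eqref{eq: g (2,1)} gives $P_1\equiv-2\,\mathsf h_{\alpha\alpha:\gamma\gamma}+(\text{multiple of }\mathsf H_1\mathsf R^\top)=-4\mathsf{\Delta^\top H_1}+c\,\mathsf H_1\mathsf R^\top$.
\item \emph{$P_2$, first attempt.} The same formula gives $-2\mathsf h_{\alpha\gamma:\alpha\gamma}$ plus curvature terms. The Codazzi equation \eqref{eq: Codazzi equation} would convert this into $\mathsf{\Delta^\top H_1}$ plus derivatives of $\mathsf{Rm}_{\alpha\beta\gamma n}$. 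However, the latter are not obviously in the list.
\item \emph{$P_2$, planned route.} Instead I will compute $\partial_n\mathsf R$ directly. Write $\mathsf R=\mathsf R^\top+2\mathsf{Ric}^\bot+(\mathsf h\text{-quadratic})$ via the Gauss equation \eqref{eq: Gauss equation}. Then differentiate the coordinate expression of $\mathsf R^\top$ in $x^n$ and project onto type $(2,1)$; this yields an explicit combination $aP_1+bP_2$ plus terms of other types. The key check is that $b\neq 0$. Combined with the formula for $P_1$, this places $P_2$ in the span.
\end{itemize}

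I expect the bookkeeping in this last projection to be the delicate point. To confirm $b\neq0$ I would cross-check on a model metric, for instance $g=dx_n^2+e^{2x_nf(x')}|dx'|^2$, where $P_1$ and $P_2$ can be evaluated explicitly.
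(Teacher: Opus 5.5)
Your proposal is correct. For the types $(0,1)(2,0)$, $(0,1)^3$, $(0,1)(0,2)$ and $(0,3)$ it follows the paper's argument: Weyl's theorem for $\Ogroup{2}$, then \eqref{eq: g (2,0)}--\eqref{eq: g (2,1)}, the two-dimensional form of $\mathsf{Rm}^\top$, and Cayley--Hamilton. In case (5) there are in fact no extra Christoffel terms: since $\mathsf\Gamma^n_{n\alpha}=\mathsf\Gamma^a_{nn}=0$, one gets exactly $\delta^{\alpha\beta}\mathsf{Rm}^\bot_{\alpha\beta;n}\equiv\partial_n\mathsf{Ric}^\bot$.

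The genuine difference is the case $T=(2,1)$. The paper lists $\mathsf h_{\alpha\alpha:\beta\beta}$, $\mathsf h_{\alpha\beta:\alpha\beta}$, $\mathsf h_{\alpha\beta:\beta\alpha}$ and $\mathsf h\cdot\mathsf{Rm}^\top$ terms, and then cites Gauss--Codazzi and the Bianchi identities. That is the route you set aside: Codazzi turns $\mathsf h_{\alpha\beta:\alpha\beta}$ into $\mathsf h_{\alpha\alpha:\beta\beta}$ plus a tangential divergence of $\mathsf{Rm}(\cdot,\cdot,\cdot,n)$. The contracted second Bianchi identity then relates that divergence to $\partial_n\mathsf R$ and $\partial_n\mathsf{Ric}^\bot$, modulo lower-order terms.

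You bypass both identities. Since $\mathsf{Ric}^\bot$ and $\mathsf H_2$ contain no tangential jets, $\Proj_{(2,1)}\partial_n\mathsf R=\Proj_{(2,1)}\partial_n\mathsf R^\top$. You then read this off from the linearized scalar curvature $\mathsf g_{\alpha\beta,\alpha\beta}-\mathsf g_{\alpha\alpha,\beta\beta}$. After applying $\partial_n$, products of first tangential derivatives stay in $\mathcal J$, and the $\partial_n\mathsf g^{\alpha\beta}$ terms have type $(0,1)(2,0)$. So $\Proj_{(2,1)}\partial_n\mathsf R\equiv P_2-P_1$ modulo $\mathcal J$, which gives $b=1$; the step you expected to be delicate is a one-line check. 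Your model metric agrees: it gives $P_1=4\Delta f$ and $P_2=2\Delta f$, hence $-2\Delta f$. This matches $\partial_n R=-2L$ in \eqref{eq: curvatures of g_L}, where $f=L(x^1)^2/2$.

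Your route is more elementary and self-contained, and it makes the nondegeneracy explicit. The paper's route is more conceptual, since it explains why $\partial_n\mathsf R$ is the natural third generator.

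In either version, $\partial_n\mathsf R$ and $\mathsf{\Delta^\top H_1}$ are not type-homogeneous. So the spanning claim in (2) must be read modulo invariants of the other four types, which your remaining cases supply. This is exactly the form used in Proposition \ref{prop:decomforn=3}.
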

\begin{proof}
 	By Weyl's first main theorem for orthogonal groups \cite[Section 1.7]{gilkeyAsymptoticFormulaeSpectral2004}, for each type $T$, $\mathcal I_T$ is spanned by all possible scalar contractions of certain multi-linear maps. Using \eqref{eq: g (2,0)}--\eqref{eq: g (2,1)}, they are:
 	\begin{enumerate}[(1)]
 		\item $\mathsf h_{\alpha\alpha}\mathsf{Rm}^\top_{\beta\gamma\beta\gamma}$, $\mathsf h_{\alpha\beta}\mathsf{Rm}^\top_{\alpha\gamma\beta\gamma}$, if $T = (0,1)(2,0)$;
 		\item $\mathsf h_{\alpha\alpha:\beta\beta}$,$\mathsf h_{\alpha\beta:\alpha\beta}$, $\mathsf h_{\alpha\beta:\beta\alpha}$, $\mathsf h_{\alpha\alpha}\mathsf{Rm}^\top_{\beta\gamma\beta\gamma}$, $\mathsf h_{\alpha\beta}\mathsf{Rm}^\top_{\alpha\gamma\beta\gamma}$, if $T = (2,1)$;
 		\item $\mathsf h_{\alpha\alpha}\mathsf h_{\beta\beta}\mathsf h_{\gamma\gamma}$, $\mathsf h_{\alpha\beta}\mathsf h_{\alpha\beta}\mathsf h_{\gamma\gamma}$, $\mathsf h_{\alpha\beta}\mathsf h_{\beta\gamma}\mathsf h_{\gamma\alpha}$, if $T = (0,1)(0,1)(0,1)$;
 		\item $\mathsf h_{\alpha\alpha}\mathsf h_{\beta\beta}\mathsf h_{\gamma\gamma}$, $\mathsf h_{\alpha\beta}\mathsf h_{\alpha\beta}\mathsf h_{\gamma\gamma}$, $\mathsf h_{\alpha\beta}\mathsf h_{\beta\gamma}\mathsf h_{\gamma\alpha}$, $\mathsf h_{\alpha\beta}\mathsf {Rm}^\bot_{\alpha\beta}$, $\mathsf h_{\alpha\alpha}\mathsf {Rm}^\bot_{\beta\beta}$, if $T = (0,1)(0,2)$;
 		\item $\mathsf h_{\alpha\beta}\mathsf {Rm}^\bot_{\alpha\beta}$, $\mathsf h_{\alpha\alpha}\mathsf {Rm}^\bot_{\beta\beta}$, $\mathsf{Rm}_{\alpha n\alpha n;n}$, if $T = (0,3)$.
 	\end{enumerate}
 	Here and below, all repeated indices are summed over. Then the lemma follows from \eqref{eq: Gauss equation}-\eqref{eq: 2nd Bianchi equation}.
\end{proof}
Consequently, we obtain
\begin{proposition}\label{prop:decomforn=3}
 	For any invariant $\mathsf p \in \widetilde{\mathcal P}_3$, there exist $C_1,\dots,C_8\in\mathbb R$ such that
 	\begin{equation*}
 		\begin{aligned}
 			\mathsf p \equiv \ & C_1\mathsf H_1^3 + C_2\mathsf H_1\mathsf R^\top + C_3\mathsf H_1\mathsf H_2 + C_4\mathsf {\Delta^\top H_1} + C_5\langle \mathsf h, \mathsf {Rm}^\bot\rangle \\
 			& + C_6\mathsf H_1\mathsf{Ric}^\bot + C_7\partial_n\mathsf{Ric}^\bot + C_8\partial_n\mathsf R\mod\mathcal J.
 		\end{aligned}
    \end{equation*}
\end{proposition}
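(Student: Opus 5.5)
The plan is to reduce to the type-by-type description in Lemma~\ref{lem: basis of I_T}. Let $\mathsf p\in\widetilde{\mathcal P}_3$ be invariant. By definition $\widetilde{\mathcal P}_3=\mathcal P_{0,3}\oplus\mathcal P_{2,1}$, and each bi-graded piece decomposes further into the type spaces $\mathcal P_T$. So we can write $\mathsf p=\sum_T\Proj_T\mathsf p$, where the sum runs over the types of total bi-degree $(0,3)$ or $(2,1)$.

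\textbf{Step 1: discard the types that vanish mod $\mathcal J$.} Any monomial containing a factor of type $(1,0)$ (i.e.\ some $\mathsf g_{\alpha\beta,\gamma}$) lies in $\mathcal J$. Monomials whose factors are only of type $(0,0)$ map to constants times $\delta$'s and carry no degree. So modulo $\mathcal J$, the only surviving types are exactly the five listed before Lemma~\ref{lem: basis of I_T}: $(0,1)(2,0)$, $(2,1)$, $(0,1)(0,1)(0,1)$, $(0,1)(0,2)$ and $(0,3)$. The factors of type $(0,0)$, namely $\mathsf g_{\alpha\beta}$ and $\mathsf G^{\pm1}$, reduce to $\delta_{\alpha\beta}$ and $1$ and do not change the type. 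Hence the image $\bar{\mathsf p}\in\mathcal O$ is $\sum_T\overline{\Proj_T\mathsf p}$ over these five types, with $\overline{\Proj_T\mathsf p}\in\mathcal O_T$.

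\textbf{Step 2: each component is invariant.} By Proposition~\ref{prop: evaluation invariance is algebraic invariance}, $\bar{\mathsf p}$ is fixed by $\Ogroup{2}$. The $\Ogroup{2}$-action preserves each $\mathcal P_T$, and it also preserves $\mathcal J$. The key point is that the decomposition $\mathcal O=\sum_T\mathcal O_T$ restricted to these five types is direct, and that $\Ogroup{2}$ commutes with the induced projections.

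I expect this to be the main technical obstacle, since $\mathcal J$ is not homogeneous in type: $\mathsf g_{\alpha\beta}-\delta_{\alpha\beta}$ mixes type $(0,0)$ with scalars. The way around it is to identify $\mathcal O$ concretely. Killing $\mathsf G-1$, $\mathsf g_{\alpha\beta}-\delta_{\alpha\beta}$ and $\mathsf g_{\alpha\beta,\gamma}$ identifies $\mathcal O$ with the polynomial ring in the remaining variables $\mathsf g_{\alpha\beta,c_1\dots c_w}$ with $w\ge1$, excluding the purely tangential first derivatives. Types are just a multigrading on this polynomial ring, so the type decomposition is direct there and $\Ogroup{2}$-equivariant. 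Consequently each $\overline{\Proj_T\mathsf p}$ is $\Ogroup{2}$-fixed, so it lies in $\mathcal I_T$. (I would check that this identification is compatible with how $\mathcal I_T$ is defined, as images of invariant polynomials; if needed, one lifts a fixed element back to an invariant via Proposition~\ref{prop: evaluation invariance is algebraic invariance}.)

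\textbf{Step 3: collect terms.} Applying Lemma~\ref{lem: basis of I_T} to each of the five components writes $\bar{\mathsf p}$ as a linear combination of the spanning elements listed there. The union of these lists is exactly $\mathsf H_1^3$, $\mathsf H_1\mathsf R^\top$, $\mathsf H_1\mathsf H_2$, $\mathsf{\Delta^\top H_1}$, $\langle\mathsf h,\mathsf{Rm}^\bot\rangle$, $\mathsf H_1\mathsf{Ric}^\bot$, $\partial_n\mathsf{Ric}^\bot$ and $\partial_n\mathsf R$. Merging the coefficients of repeated entries (for example $\mathsf H_1^3$ appears in both (3) and (4)) gives $C_1,\dots,C_8$, and the congruence mod $\mathcal J$ follows.
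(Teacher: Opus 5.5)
Your proposal is correct and takes the same route as the paper: decompose the image of $\mathsf p$ in $\mathcal O$ by type, discard the types that contain a factor $\mathsf g_{\alpha\beta,\gamma}$, apply Lemma~\ref{lem: basis of I_T} to each of the five surviving components, and merge the coefficients of repeated invariants. Your Step~2 supplies what the paper leaves implicit when it writes $\widetilde{\mathcal O}_3$ as a sum of the $\mathcal O_T$ with ``$+$'', namely that the sum is direct and $\Ogroup{2}$-equivariant, so each type component of a fixed element is fixed and lies in $\mathcal I_T$; in Step~3 you correctly read Lemma~\ref{lem: basis of I_T} as saying that $\mathcal I_T$ lies in the span of the listed invariants, which is the right reading because those invariants are not themselves of pure type $T$.
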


\section{The full symbol of $A_{x^n}$}\label{sec: full symbol}

\subsection{The Lee--Uhlmann formula}\label{subsec: L-U}

We first write down the Lee-Uhlmann formula for the full symbol of the Dirichlet-to-Neumann operator $\Lambda(\Omega,g)$, which was obtained by Lee and Uhlmann in \cite[Proposition 1.1]{leeDeterminingAnisotropicReal1989}. The idea is to construct a family of pseudodifferential operators $A_{x^n}(\Omega,g)$ depending on the parameter $x^n$ in a boundary normal coordinate system whose symbols can be computed recursively,  such that $A_0(\Omega,g)$ equals $-\Lambda(\Omega,g)$ modulo smoothing operators.  
If we write $\sum_{j\ge 0}\sigma_{m-j}(\phi,A)$ for the full symbol expansion of a pseudodifferential operator $A$ in the local coordinate system $\phi$, then the Lee-Uhlmann formulas for $A_{x^n}$ take the form 
\begin{align}
        \label{eq: r_1}
        \sigma_{1}(\phi,A_{x^n}) & = - \sqrt{q_2},\\
        \label{eq: r_0}
        \sigma_{0}(\phi,A_{x^n}) & = \frac{1}{2\sqrt{q_2}}\left(\sum_\alpha D_{\xi_\alpha} \sigma_{1}\partial_{x^\alpha} \sigma_{1} - q_1 + \partial_{x^n} \sigma_{1} - E \sigma_{1}\right),
\end{align}
and for $j > 1$,
\begin{equation}\label{eq: r_{1 - j}}
    \sigma_{1-j}(\phi,A_{x^n}) = \frac{1}{2\sqrt{q_2}}\left(\sum_{\substack{0\le k,l\le j-1,\\|J| = j-k-l}}\frac{1}{J!}D_\xi^J\sigma_{1-k} \partial_x^J \sigma_{1-l} + \partial_{x^n} \sigma_{2-j} - E\sigma_{2-j}\right),
\end{equation}
where
\begin{align*}
    q_2   = g^{\alpha\beta}\xi_\alpha\xi_\beta \quad \text{and} \quad 
    q_1   = - i\left(g^{\alpha\beta}\partial_{x^\alpha}\log \sqrt{\det(g)} + \partial_{x^\alpha}g^{\alpha\beta}\right)\xi_\beta 
\end{align*}
are the leading  and sub-leading symbols of the Laplace--Beltrami operator on $\Sigma$, and 
\[E = -\partial_{x^n}\log \sqrt{\det(g)} = -|d\sigma_{x^n}|^{-1}\partial_{x^n}|d\sigma_{x^n}|\]
is the normal derivative of the area density.

In fact, if we write $\sigma(\phi,A_{x^n}^2)$ for the full symbol of  $A_{x^n}^2$ in the coordinate system $\phi$, then \eqref{eq: r_1}--\eqref{eq: r_{1 - j}} follow from solving the equation (cf. \cite[(1.5)]{leeDeterminingAnisotropicReal1989}) 
\begin{equation}
	\label{operatordecomp}
	A_{x^n}^2 - Q + i[D_{x^n},A_{x^n}] - EA_{x^n} = 0,
\end{equation}
where $Q = q_2(x,D_{x'}) + q_1(x,D_{x'})$, $D_{x'} = (D_{x^1},\dots D_{x^{n-1}})$. In particular, we have
\begin{equation}\label{eq: sigma A^2 and sigma A}
    \sigma_{-2}(\phi,A_{x^n}^2) + \partial_{x^n}\sigma_{-2}(\phi,A_{x^n})-E\sigma_{-2}(\phi,A_{x^n}) = 0.
\end{equation}
This identity will help us to compute $\res A_{x^n}^2$ from $\res A_{x^n}$.

\subsection{Characterization of the full symbol}

Let $(\Omega^n,g)$ be a Riemannian manifold with an $\varepsilon$-collar, and let $\phi$ be a boundary normal coordinate system defined on $U^\varepsilon$. For any $x^n\in[0,\varepsilon)$ and $(q,\xi_\alpha dx^\alpha)\in T^*U\setminus 0_U$, where $0_U\subset T^*U$ denotes the image of the zero section, we construct an evaluation map $\mathcal{SP}\to C^\infty((T^*U\setminus 0_U)\times[0,\varepsilon))$ by setting
\[\eta_\alpha\mapsto i\xi_\alpha,\quad \rho \mapsto\sqrt{g_{x^n}^{\alpha\beta}(q)\xi_\alpha\xi_\beta},\quad \mathsf p\mapsto \mathsf p_{x^n}(\phi,g)(q),\]
where $(g_{x^n}^{\alpha\beta}) = (g_{x^n}(\partial_{x^\alpha},\partial_{x^\beta}))^{-1}.$ As $(q,\xi)$ varies in $T^*U\setminus 0_U$, this gives rise to a smooth function $\mathsf s(\phi,g)$. Write $\mathsf s(\phi,g,x^n) = \mathsf s(\phi,g)(\cdot,x^n)$. Moreover, it follows from the definition that
\[\partial_c\mathsf s(\phi,g) = \partial_{x^c}(\mathsf s(\phi,g)),\qquad D_\gamma\mathsf s(\phi,g) = -i\partial_{\xi_\gamma}(\mathsf s(\phi,g)).\]

Using the Lee--Uhlmann formula, we prove:

\begin{proposition}\label{prop: characterization of symbol}
    For any $j\in\mathbb N_0$, there exists $\mathsf s_{1-j}\in\mathcal{SP}_{0,j}^{1-j}$ such that  for any Riemannian manifold $(\Omega^n,g)$ with an $\varepsilon$-collar,    one has
    \begin{equation}\label{eq: characterization of symbol}
        \sigma_{1-j}(\phi,A_{x^n}(\Omega,g)) = \mathsf s_{1-j}(\phi,g,x^n) 
    \end{equation}
    in any boundary normal coordinate system $\phi$.
\end{proposition}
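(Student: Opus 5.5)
We argue by induction on $j$: we construct the elements $\mathsf s_{1-j}$ explicitly from the Lee--Uhlmann recursion \eqref{eq: r_1}--\eqref{eq: r_{1 - j}}, and track their grading with Proposition \ref{prop:thespaceSP}.

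First we check that each ingredient of the recursion is the evaluation of an element of $\mathcal{SP}$ with a known grading. Under the evaluation map, $\rho\mapsto\sqrt{q_2}$ and $\eta_\alpha\mapsto i\xi_\alpha$. Hence $\sqrt{q_2}$ is the evaluation of $\rho\in\mathcal S^{1,0}\otimes\mathcal P_{0,0}=\mathcal{SP}^1_{0,0}$, and $(2\sqrt{q_2})^{-1}$ is the evaluation of $\tfrac12\rho^{-1}\in\mathcal{SP}^{-1}_{0,0}$.

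For $E$, we use $\partial_n\mathsf G=\tfrac12\mathsf G^{-1}\partial_n\det\mathsf g$ to get
\[
E=-\mathsf G^{-1}\partial_n\mathsf G=-\tfrac12\mathsf g^{\alpha\beta}\mathsf g_{\alpha\beta,n}\in\mathcal P_{0,1}=\mathcal{SP}^0_{0,1}.
\]

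For $q_1$, it is a sum of terms $\eta_\beta\cdot(\text{tangential first derivatives of }\mathsf g)$, namely $-i\xi_\beta(\cdots)=-\eta_\beta(\cdots)$. Such a term lies in $\mathcal S^{0,1}\otimes\mathcal P_{1,0}$. With $w^\top=0$, $w^\bot=1$, $k=1$, $m=1$, this is exactly the $k=1$ summand of $\mathcal{SP}^{1}_{0,1}$.

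We take $\mathsf s_1:=-\rho\in\mathcal{SP}^1_{0,0}$. The derivative rules $\partial_c\mathsf s(\phi,g)=\partial_{x^c}\mathsf s(\phi,g)$ and $D_\gamma\mathsf s(\phi,g)=-i\partial_{\xi_\gamma}\mathsf s(\phi,g)$ show that the symbol operations $D_{\xi}^J$, $\partial_x^J$ and $\partial_{x^n}$ correspond to the formal derivations $D^J$, $\partial^J$ and $\partial_n$. Here $D_{\xi}=-i\partial_\xi$ by the usual convention.

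Inductive step. Assume $\mathsf s_{1-k}\in\mathcal{SP}^{1-k}_{0,k}$ for all $k<j$. By Proposition \ref{prop:thespaceSP}(2),
\[
D^J\mathsf s_{1-k}\in\mathcal{SP}^{1-k-|J|}_{-|J|,\,k+|J|},\qquad \partial^J\mathsf s_{1-l}\in\mathcal{SP}^{1-l}_{|J|,\,l}.
\]
By part (1), the product $\frac1{J!}D^J\mathsf s_{1-k}\,\partial^J\mathsf s_{1-l}$ lies in $\mathcal{SP}^{2-k-l-|J|}_{0,\,k+l+|J|}=\mathcal{SP}^{2-j}_{0,j}$, since $|J|=j-k-l$.

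A negative tangential index looks alarming, so we should make sense of it. We interpret $\mathcal{SP}^m_{w^\top,w^\bot}$ for $w^\top<0$ by the same defining formula, in which only $k\ge -w^\top$ contributes. Proposition \ref{prop:thespaceSP} holds verbatim with this reading, since its proof only relabels indices. In any case the total product has $w^\top=0$, so the issue disappears there.

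The remaining two terms are handled the same way. By part (3), $\partial_n\mathsf s_{2-j}\in\mathcal{SP}^{2-j}_{0,j}$. By part (1), $E\mathsf s_{2-j}\in\mathcal{SP}^{0}_{0,1}\cdot\mathcal{SP}^{2-j}_{0,j-1}\subset\mathcal{SP}^{2-j}_{0,j}$. Multiplying by $\tfrac12\rho^{-1}$ then gives $\mathsf s_{1-j}\in\mathcal{SP}^{1-j}_{0,j}$. The case $j=1$ uses \eqref{eq: r_0} and is treated the same way, with the extra term $-q_1\in\mathcal{SP}^1_{0,1}$.

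The evaluation identity \eqref{eq: characterization of symbol} then follows by induction, because evaluation is an algebra homomorphism that intertwines the derivations as noted above.

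The main obstacle is the bookkeeping. We must check that evaluation commutes with the derivations even where $\rho$ depends on $x$ through $\mathsf g^{\alpha\beta}$. This is why $\partial_c\rho$ and $D_\gamma\rho$ were defined as they were, and the check is that $\partial_{x^c}\sqrt{g^{\alpha\beta}\xi_\alpha\xi_\beta}$ and $-i\partial_{\xi_\gamma}$ of it match those formulas after substituting $\eta=i\xi$. The signs and factors of $i$ need care. For example, $D_\gamma\rho$ evaluates to $-\rho^{-1}i\xi_\alpha g^{\alpha\gamma}$, and this equals $-i\partial_{\xi_\gamma}\rho$, so the conventions agree.
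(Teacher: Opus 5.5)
Your argument follows the paper's proof step for step: induction through the Lee--Uhlmann recursion, with degrees tracked by Proposition~\ref{prop:thespaceSP}. Your check that evaluation intertwines $\partial_c, D_\gamma$ with $\partial_{x^c}, -i\partial_{\xi_\gamma}$ is correct.

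\textbf{The gap.} It is the grading step you delegate to Proposition~\ref{prop:thespaceSP}(2)--(3), which you assert ``holds verbatim since its proof only relabels indices''. Those inclusions are false, and so is the conclusion $\mathsf s_{1-j}\in\mathcal{SP}^{1-j}_{0,j}$. A monomial $\rho^a\eta^J\mathsf p$ lies in $\mathcal{SP}^m_{w^\top,w^\bot}$ exactly when $a+|J|=m$, $w^\top=\ord^\top(\mathsf p)-|J|$ and $w^\bot=\ord^\bot(\mathsf p)+|J|$. In particular, in $\mathcal{SP}^{m}_{0,w}$ the $\eta$-degree must equal the tangential order. But $\partial_c\rho=-\tfrac12\rho^{-1}\eta_\alpha\eta_\beta\,\partial_c\mathsf g^{\alpha\beta}$ adds two $\eta$'s together with a single jet, and $D_\gamma\eta_\alpha=\delta_{\alpha\gamma}$ removes an $\eta$. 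Consequently:
\begin{itemize}
\item $\partial_n\rho\in\mathcal S^{-1,2}\otimes\mathcal P_{0,1}$ is not in $\mathcal{SP}^1_{0,1}=\mathcal S^{1,0}\otimes\mathcal P_{0,1}\oplus\mathcal S^{0,1}\otimes\mathcal P_{1,0}$;
\item $\partial_\gamma\rho\notin\mathcal{SP}^1_{1,0}$;
\item $D_\gamma\mathsf s_1\,\partial_\gamma\mathsf s_1=\tfrac12\rho^{-2}\eta_\alpha\eta_\beta\eta_\delta\,\mathsf g^{\alpha\gamma}\partial_\gamma\mathsf g^{\beta\delta}$, which has $\eta$-degree $3$ and one tangential jet, is not in $\mathcal{SP}^1_{0,1}$.
\end{itemize}
Choosing a different representative does not help. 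For the warped torus $g_v$ of Section~\ref{sec: proof of thm}, where $q_1=0$ and all tangential derivatives vanish, the recursion gives $\sigma_0=-\frac{v'}{2v}\,\xi_2^2/q_2$. At a boundary point with $v=1$ and $v'=a\neq 0$ this equals $-\tfrac a2\,\xi_2^2/|\xi|^2$, which is even and non-constant in $\xi$. Every element of $\mathcal{SP}^0_{0,1}$, by contrast, evaluates to a $\xi$-independent function plus an odd function of $\xi$. So the statement as written already fails at $j=1$. Likewise, the anisotropic $r_{-2}$ computed for $g_v$ cannot be the evaluation of any element of $\mathcal{SP}^{-2}_{0,3}$. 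The paper's own proof, being the same argument, has the same defect.

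\textbf{The repair.} The same induction proves what is actually used later, once the bigrading is replaced.
\begin{enumerate}
\item Grade $\rho^a\eta^J\mathsf p$ by homogeneity $a+|J|$ and total jet order $\ord^\top(\mathsf p)+\ord^\bot(\mathsf p)$. Checking on $\rho$, $\eta$ and $\mathcal P$: $D_\gamma$ lowers homogeneity by one and fixes jet order, while $\partial_c$ fixes homogeneity and raises jet order by one. Your bookkeeping then correctly shows that $\mathsf s_{1-j}$ is homogeneous of degree $1-j$ and of jet order $j$.
\item The parity information the bigrading was meant to encode comes from the algebra involution $\iota$ defined by $\iota\eta_\alpha=-\eta_\alpha$, $\iota\rho=\rho$, $\iota\mathsf G=\mathsf G$, and $\iota\mathsf g_{\alpha\beta,c_1\dots c_w}=(-1)^{\ord^\top}\mathsf g_{\alpha\beta,c_1\dots c_w}$; this is the reflection $x'\mapsto -x'$.
\item $\iota$ anticommutes with $D_\gamma$ and $\partial_\gamma$, commutes with $\partial_n$, and fixes $\rho^{\pm1}$, $\mathsf s_1$, $\mathsf q_1$ and $\mathsf E$. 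Every term of the recursion pairs $D^J$ with $\partial^J$, so induction gives $\iota\mathsf s_{1-j}=\mathsf s_{1-j}$. Equivalently, $|J|\equiv\ord^\top(\mathsf p)\pmod 2$ in every monomial.
\end{enumerate}
Since $\int_{|\xi|=1}\xi^J=0$ for odd $|J|$, this parity, rather than the bigrading, is what forces the integrated symbol into even tangential order when $I$ is applied downstream.
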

\begin{proof}
    We proceed by induction. The   case $j=0$ follows from \eqref{eq: r_1} by setting $\mathsf s_1 =- \rho \in \mathcal {SP}_{0,0}^1$. 
    The   case $j=1$ follows from \eqref{eq: r_0} by setting 
    \[ \mathsf s_0 := \frac{1}{2}\rho^{-1}\left(\sum_\gamma D_\gamma\mathsf s_1\partial_\gamma\mathsf s_1 -\mathsf q_1 + \partial_n\mathsf s_1-\mathsf E\mathsf s_1\right),\]
    where 
    \[\mathsf E = - \mathsf G^{-1}\partial_n  \mathsf G\in\mathcal{SP}_{0,1}^0,\]
    and  
    \[\mathsf q_1 = -\eta_\beta \mathsf G^{-1}\mathsf g^{\alpha\beta}\partial_\alpha\mathsf G - \eta_\beta\partial_\alpha\mathsf g^{\alpha\beta}\in\mathcal {SP}_{0,1}^1.\] 
    Moreover,  
by Proposition \ref{prop:thespaceSP}, $\mathsf s_0  \in \mathcal{SP}_{0,1}^0$.

  After constructing the desired $\mathsf s_1,\mathsf s_0,\dots,\mathsf s_{2-j}$, $j\ge 2$,   in view of \eqref{eq: r_{1 - j}} and Proposition \ref{prop:thespaceSP}, 
    \[\mathsf s_{1-j} := \frac{1}{2}\rho ^{-1}\left(\sum_{\substack{0\le k,l\le j-1,\\ |J| = j-k-l}}\frac{1}{J!}D^J\mathsf s_{1-k} \partial^J\mathsf s_{1-l} + \partial_{n}\mathsf s_{2 - j} -\mathsf E\mathsf s_{2 - j}\right) \in \mathcal{SP}_{0,j}^{1-j} \]
     satisfies \eqref{eq: characterization of symbol}.
\end{proof}

The same argument also gives the corresponding statement for integer powers.

\begin{proposition}\label{prop: characterization of symbol A^z}
    For each $z\in\mathbb Z$, there exists $\mathsf s_{z,z-j}\in\mathcal{SP}_{0,j}^{z-j}$, $j\in\mathbb N_0$, such that for any Riemannian manifold $(\Omega^n,g)$ with an $\varepsilon$-collar,   one has
    \[\sigma_{z-j}(\phi,A_t(\Omega,g)^z) = \mathsf s_{z,z-j}(\phi,g)|_{x^n=t} \]
    in any boundary normal coordinate system $\phi$.
\end{proposition}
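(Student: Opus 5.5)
We argue by cases on the sign of $z$, reducing everything to symbol composition and inversion formulas whose coefficients are built from the derivations $D^J$ and $\partial^J$. These derivations, together with multiplication, respect the grading of $\mathcal{SP}$ by Proposition~\ref{prop:thespaceSP}.

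\emph{Case $z=0$.} The symbol of the identity is $1$, so we take $\mathsf s_{0,0}=1\in\mathcal{SP}^0_{0,0}$ and $\mathsf s_{0,-j}=0$ for $j\ge1$.

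\emph{Case $z\ge1$.} We induct on $z$. The case $z=1$ is Proposition~\ref{prop: characterization of symbol}, with $\mathsf s_{1,1-j}:=\mathsf s_{1-j}$. For the inductive step, write $A_t^{z+1}=A_t^z\circ A_t$ and use the composition formula in the coordinates $\phi$:
\[
\sigma_{z+1-j}(\phi,A_t^{z+1})=\sum_{k+l+|J|=j}\frac{1}{J!}\,D_\xi^J\sigma_{z-k}(\phi,A_t^z)\,\partial_x^J\sigma_{1-l}(\phi,A_t).
\]
We then define
\[
\mathsf s_{z+1,z+1-j}:=\sum_{k+l+|J|=j}\frac{1}{J!}\,D^J\mathsf s_{z,z-k}\,\partial^J\mathsf s_{1-l}.
\]
Two checks are needed.
\begin{enumerate}
\item \emph{Evaluation.} The evaluation map commutes with products and sends $\partial_c\mapsto\partial_{x^c}$ and $D_\gamma\mapsto -i\partial_{\xi_\gamma}=D_{\xi_\gamma}$. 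Hence the formal expression evaluates to the true symbol; the convention $\eta_\alpha\mapsto i\xi_\alpha$ is exactly what makes $D_\gamma$ match $D_{\xi_\gamma}$.
\item \emph{Grading.} Each term lies in $\mathcal{SP}^{z-k-|J|}_{-|J|,k+|J|}\cdot\mathcal{SP}^{1-l}_{|J|,l}\subset\mathcal{SP}^{z+1-j}_{0,j}$.
\end{enumerate}
The only subtle point in the grading check is that the intermediate factor $D^J\mathsf s_{z,z-k}$ has negative tangential degree $-|J|$. We handle this by regarding $\mathcal{SP}^m_{w^\top,w^\bot}$ as defined for all $w^\top\in\mathbb Z$, with $\mathcal P_{a,b}=0$ when $a<0$. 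Its summands $\mathcal S^{m-k,k}\otimes\mathcal P_{w^\top+k,w^\bot-k}$ still make sense, and the proof of Proposition~\ref{prop:thespaceSP} goes through verbatim. The negative tangential degree of one factor is then cancelled by the $+|J|$ produced by $\partial^J$ on the other. This bookkeeping is the point I expect to require the most care.

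\emph{Case $z\le -1$.} First construct a parametrix symbol for $A_t^{-1}$. Since $\sigma_1(\phi,A_t)=-\sqrt{q_2}$ is elliptic, set $\mathsf s_{-1,-1}:=-\rho^{-1}\in\mathcal{SP}^{-1}_{0,0}$. Then define recursively
\[
\mathsf s_{-1,-1-j}:=\rho^{-1}\sum_{\substack{k+l+|J|=j\\ k<j}}\frac{1}{J!}\,D^J\mathsf s_{-1,-1-k}\,\partial^J\mathsf s_{1-l},
\]
which solves $\sigma(A_t^{-1}\circ A_t)=1$ order by order. The factor $\rho^{-1}\in\mathcal{SP}^{-1}_{0,0}$ keeps the grading correct, and the same grading argument as in the case $z\ge1$ shows $\mathsf s_{-1,-1-j}\in\mathcal{SP}^{-1-j}_{0,j}$.

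To identify this with the true symbol of $A_t^{-1}$, we use that the full symbol of an inverse (modulo smoothing) is uniquely determined by the same recursion. For the applications, the relevant operator is $\widetilde\Lambda$, so smoothing errors do not matter. Finally, negative powers follow by the same composition induction as in the case $z\ge1$, now applied to $A_t^{-1}$.
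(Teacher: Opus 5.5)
Modulo one caveat that applies equally to the paper, your proof is correct and is essentially the paper's. The cases $z=0$ and $z=1$ (Proposition~\ref{prop: characterization of symbol}) are handled the same way, and $z\ge2$ uses the same composition induction $A_t^{z}=A_t^{z-1}\circ A_t$. For $z<0$ the only difference is that you invert $A_t$ once and then compose, whereas the paper inverts $A_t^{-z}$ directly via $\mathsf s_{z,z}=(-\rho)^{z}$ and the recursion solving $\sigma(A_t^{z}\circ A_t^{-z})=1$. Your route therefore needs one small extra remark: $(A_t^{-1})^{|z|}$ is itself a parametrix of $A_t^{|z|}$, and hence has the same full symbol modulo $S^{-\infty}$.

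The caveat is that your grading check, like the paper's, rests on Proposition~\ref{prop:thespaceSP}(2), which is too strong as literally stated. For example, $\partial_\gamma\rho=-\tfrac12\rho^{-1}\eta_\alpha\eta_\beta\,\partial_\gamma\mathsf g^{\alpha\beta}$ lies in $\mathcal S^{-1,2}\otimes\mathcal P_{1,0}$, not in $\mathcal{SP}^1_{1,0}=\mathcal S^{1,0}\otimes\mathcal P_{1,0}$, because the formal algebra does not know that $\rho^2=-\mathsf g^{\alpha\beta}\eta_\alpha\eta_\beta$ after evaluation. So the real subtlety is not the negative tangential index you worried about; your extension to $w^\top\in\mathbb Z$ just makes explicit what the paper tacitly does. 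Rather, what the recursion genuinely preserves is only the $\xi$-degree, the total jet order $j$ of the coefficients, and the parity $\ord^\top(\mathsf p)\equiv|K|\pmod 2$ for each monomial $\rho^a\eta^K\mathsf p$. That weaker bookkeeping is what the later arguments actually use.
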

\begin{proof}
    The case $z=0$ is trivial and the case $z=1$ is given by  Proposition \ref{prop: characterization of symbol}. Recall that if $P_1$ and $P_2$ are two pseudodifferential operators of order $z_1$ and $z_2$, respectively, then the full symbol of $P_1\circ P_2$ is locally given by
    \[\sigma_{z_1+z_2-j}(P_1\circ P_2) = \sum_{\substack{0\le k,l\le j\\|J| = j-k-l}}\frac{1}{J!}D_{\xi}^J\sigma_{z_1-k}(P_1) \partial_x^J \sigma_{z_2-l}(P_2).\]
    For $z>1$, set
    \[\mathsf s_{z,z-j} = \sum_{\substack{0\le k,l\le j\\|J| = j-k-l}}\frac{1}{J!}D^J\mathsf s_{z-1,z-1-k} \partial^J\mathsf s_{1,1-l}.\]
    Then by induction on $z$,  $\mathsf s_{z,z-j}\in\mathcal{SP}_{0,j}^{z-j}$ satisfies the desired condition.

    Finally assume that $z<0$. Since $A^{z}\circ A^{-z}\equiv\Id$ modulo smoothing operators, we may take, in view of the composition formulas above,  
    \[ 
          \mathsf s_{z,z}   =\mathsf s_{-z,-z}^{-1} = (-\rho)^{z}\in\mathcal{SP}_{0,0}^{z},
     \]
     and in general, for any $j \ge 1$,      
     \[     \mathsf s_{z,z-j}   = -\mathsf s_{-z,-z}^{-1}\sum_{\substack{0\le k,l\le j\\(k,l)\neq (j,0)\\ |J| = j-k-l}}\frac{1}{J!}D^J \mathsf s_{z,z-k} \partial^J \mathsf s_{-z,-z-l}\in\mathcal{SP}_{0,j}^{z-j},\quad j\ge 1.
    \]
\end{proof}

\section{The Guillemin--Wodzicki residue of $\Lambda$} \label{sec:GWResidue}

Recall that $\widetilde{\mathcal P}_{w} = \bigoplus_{0\le j\le[w/2]}\mathcal P_{2j,w-2j}$. On the space 
\[\mathcal{SP}_{0,w}^m = \bigoplus_{J} \rho^{m-|J|}\eta^J \mathcal P_{|J|,w-|J|},\] there is an integral map 
\[I:\mathcal{SP}_{0,w}^m\to\widetilde{\mathcal P}_w,\quad \sum_{J}\rho^{m-|J|}\eta^J\mathsf p 
\mapsto \sum_{J} \left(\int_{|\xi|=1}(i\xi)^Jd\xi\right) \mathsf p.\] 
Here we use the fact that  $\int_{|\xi|=1}(i\xi)^J d\xi = 0$ for odd $|J|$. It directly follows from the definition that $I$ maps $\mathcal{SP}_T$ to $\mathcal P_T$ for any type $T$. 

Proposition \ref{prop: characterization of symbol A^z} gives the following structural description of the Guillemin--Wodzicki residue of $A_{x^n}^z$.

\begin{proposition}\label{prop: structure of residue}
    For any integer $z\ge-n+1$, there exists an invariant $\mathsf P_z\in\widetilde{\mathcal P}_{z+n-1}$ such that for any Riemannian manifold $(\Omega^n,g)$ with an $\varepsilon$-collar, one has
    \[\res\left(A_{t}(\Omega,g)^z\right) = \mathsf P_z(g)\big|_{x^n=t}\,  |d\sigma_{t}|.\]
    Here $A_{t}(\Omega,g)^z$ denotes any classical parametrix of $A_{t}(\Omega,g)^{-z}$ for $z<0$.
\end{proposition}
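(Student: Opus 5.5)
The plan is to read the residue density directly off the symbol term of degree $-d=-(n-1)$ and use Proposition \ref{prop: characterization of symbol A^z}. By that proposition, with $j=z+n-1\ge 0$, there is $\mathsf s_{z,-(n-1)}\in\mathcal{SP}_{0,z+n-1}^{-(n-1)}$ such that $\sigma_{-(n-1)}(\phi,A_t^z)=\mathsf s_{z,-(n-1)}(\phi,g)|_{x^n=t}$ in every boundary normal coordinate system. For $z<0$ we take the parametrix built in that proof; any two classical parametrices differ by a smoothing operator, so $\res$ is unaffected. We write
\[
\mathsf s_{z,-(n-1)}=\sum_J\rho^{-(n-1)-|J|}\eta^J\mathsf p_J,\qquad \mathsf p_J\in\mathcal P_{|J|,z+n-1-|J|}.
\]

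First, define the candidate density. We evaluate at a point $q$ in a chart $\phi$ that is moreover a geodesic normal chart of $(\Sigma,g_t)$ centered at $q$. There $g_t^{\alpha\beta}(q)=\delta_{\alpha\beta}$ and $\rho=|\xi|$, so
\[
\int_{|\xi|=1}\sigma_{-(n-1)}\,|d\sigma_\xi|=\sum_J\Bigl(\int_{|\xi|=1}(i\xi)^J\,d\xi\Bigr)\mathsf p_J(\phi,g)(q)=I(\mathsf s_{z,-(n-1)})(\phi,g)(q).
\]
Set $\mathsf P_z:=I(\mathsf s_{z,-(n-1)})$. Odd $|J|$ integrate to zero, so only $|J|$ even survives, and by the definition of $I$ we get $\mathsf P_z\in\widetilde{\mathcal P}_{z+n-1}$.

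Second, transfer from normal charts to all charts. Because $\res_x$ is a coordinate-independent density (Wodzicki), at $q$ we have $\res_q(A_t^z)=c(q,t)\,|d\sigma_t|$ for a function $c$ that does not depend on the chart. In a normal chart centered at $q$, $|dx|=|d\sigma_t|$ at $q$. The computation above therefore shows $\mathsf P_z(\phi,g)(q)=c(q,t)$ for every such chart, so $\mathsf P_z$ is invariant in the sense of Section \ref{subsec: exp of inv poly}, and $\res(A_t^z)=\mathsf P_z(g)|_{x^n=t}\,|d\sigma_t|$.

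The main obstacle is the second step: the residue density is evaluated with the Euclidean norm $|\xi|=1$ and $|dx|$, while $\rho$ evaluates to the $g_t$-norm. In a general chart $I$ does not literally compute the residue integral. I would handle this by only ever evaluating in normal charts centered at the point, where the two norms and the two densities agree to zeroth order. Coordinate independence of $\res_x$ with respect to both the chart and the auxiliary norm then supplies invariance without any algebraic verification. If needed, Proposition \ref{prop: evaluation invariance is algebraic invariance} rephrases this as $\Ogroup{n-1}$-invariance in $\mathcal O$; this is consistent because rotations of a normal chart preserve normality.
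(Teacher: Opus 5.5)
Your proof is correct and is essentially the paper's argument: you take $\mathsf s_{z,-(n-1)}$ from Proposition~\ref{prop: characterization of symbol A^z}, apply $I$, and obtain invariance from the coordinate independence of $\res_x$. The paper normalizes by $\mathsf G^{-1}I(\mathsf s_{z,-n+1})$, which agrees with your $I(\mathsf s_{z,-(n-1)})$ in geodesic normal charts since $\mathsf G\equiv 1 \bmod \mathcal J$. Your choice to evaluate only in geodesic normal charts centered at $q$ is the careful version of the paper's step: the paper asserts $\res = I(\mathsf s_{z,-n+1})(\phi,g)\,|dx'|$ in an arbitrary boundary normal chart, whereas, as you note, $\rho$ evaluates to the $g_t$-norm there, so $I$ does not literally compute the residue integral unless $g_t(q)$ is the identity.
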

\begin{proof}
    Let $\phi$ be a boundary normal coordinate system. By Proposition \ref{prop: characterization of symbol A^z} and the definition of the Guillemin--Wodzicki residue density,
    \begin{equation*}
        \res\left(A_{t}(\Omega,g)^z\right) = \mathsf Q_z(\phi,g)\big|_{x^n=t}\, |dx'|,
    \end{equation*}
    where $\mathsf Q_z := I(\mathsf s_{z,-n+1})$, and $dx'=dx^1 \cdots dx^{n-1}$ in the coordinate system $\phi$. Since $|d\sigma_t|= \sqrt{\det(( g_t)_{\alpha\beta})}|dx'|$, the metric polynomial $\mathsf P_z = \mathsf G^{-1}\mathsf Q_z$ satisfies
    \begin{equation}
        \label{eq: residue of A^z is P_z}
        \res\left(A_{t}(\Omega,g)^z\right) = \mathsf P_z(\phi,g)\big|_{x^n=t}\, |d\sigma_t|.
    \end{equation}
    
    It remains to show that $\mathsf P_{z}$ is invariant. Since the left-hand side of \eqref{eq: residue of A^z is P_z} is independent of the choice of coordinate system, so is the right-hand side. In particular, this proves the invariance of $\mathsf P_z$ since $|d\sigma_t|$ is independent of the choice of boundary orthonormal basis.  
\end{proof}
\begin{remark}
    \label{rmk: strong invariance}
    In fact, we have shown a stronger invariance of $\mathsf P_z$: the value of $\mathsf P_z(\phi,g)$ at a point is independent of the choice of the  boundary normal coordinate system $\phi$ (which need not be a geodesic normal coordinate system).
\end{remark}

As a first application, we prove the following vanishing result.

\begin{corollary}\label{cor: special manifolds}
    \begin{enumerate}
        \item If $\partial_{x^n}^j(g_{x^n})|_{x^n=0} = 0$ for $1\le j\le n+z-1$, then $\res \Lambda(\Omega,g)^z = 0$ for all  $z\in\mathbb Z$ such that $n+z$ is even.
        \item If the boundary $\Sigma$ has a collar neighborhood that is isometric to $\Sigma\times[0,\varepsilon)$, then $\res \Lambda(\Omega,g)^z = 0$ for all even $z\in\mathbb N_+$. 
    \end{enumerate}
\end{corollary}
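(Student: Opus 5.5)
Both parts come from Proposition \ref{prop: structure of residue} together with a parity/grading argument on the invariant $\mathsf P_z\in\widetilde{\mathcal P}_{z+n-1}$. We have $\res(\Lambda^z)=\res((-A_0)^z)=(-1)^z\mathsf P_z(g)|_{x^n=0}\,|d\sigma_0|$, because $A_0\equiv-\Lambda$ modulo smoothing operators, and for $z<0$ any parametrix differs from $\widetilde\Lambda^z$ by a smoothing operator. So it suffices to show that the evaluation of $\mathsf P_z$ at $x^n=0$ vanishes under each hypothesis.

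\emph{Part (1).} We evaluate in a boundary normal coordinate system $\phi$ whose tangential part is geodesic normal for $(\Sigma,g_0)$ at $q$. Every monomial of $\mathsf P_z$ lies in some $\mathcal P_{2j,w-2j}$ with $w=z+n-1$ odd, since $n+z$ is even. Hence its total order $w$ is odd, and its normal order $w-2j$ is odd as well. In particular each monomial contains at least one factor $\mathsf g_{\alpha\beta,c_1\dots c_k}$ with $1\le \ord^\bot\le w=n+z-1$. Under the hypothesis $\partial_{x^n}^m g_{x^n}|_{x^n=0}=0$ for $1\le m\le n+z-1$, all tangential derivatives of these normal derivatives also vanish on $\Sigma$, because the hypothesis holds identically in $x'$. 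So every such factor evaluates to $0$ at $x^n=0$. The remaining factors $\mathsf G^{\pm1}$ are finite, so $\mathsf P_z(\phi,g)(q,0)=0$. The only point to check is that a monomial of odd total degree cannot have normal order zero, and this is immediate from the bigrading.

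\emph{Part (2).} Take $z$ even and positive. The product metric $dt^2+g_\Sigma$ on the collar gives $\partial_{x^n}^j g_{x^n}\equiv0$ for all $j\ge1$. So every monomial with positive normal order vanishes, not only at $x^n=0$ but throughout the collar. If $n+z$ is even we are done by part (1). If $n$ is odd, the degree $w=z+n-1$ is even, and monomials of type $(w,0)$ need not vanish; this is the main obstacle. Here I would use identity \eqref{eq: sigma A^2 and sigma A}, or more precisely its analogue obtained from \eqref{operatordecomp}. On a product collar we have $E=0$ and the symbols of $A_{x^n}$ are independent of $x^n$, so \eqref{operatordecomp} becomes $A^2=Q$ modulo smoothing. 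Thus $A_0^2$ is a differential operator, namely $\Delta_\Sigma$ up to sign, and so is $A_0^z=(A_0^2)^{z/2}$ for even $z>0$. The residue density of a differential operator vanishes, since its symbol has no homogeneous term of degree $-(n-1)<0$. Therefore $\res\Lambda^z=0$ for every even $z\in\mathbb N_+$. This argument in fact handles part (2) without any parity condition on $n$, so I would present it this way and use part (1) only as a consistency check.
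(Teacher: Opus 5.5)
Your proposal is correct and follows the paper's proof. Part (1) is the same parity argument: every monomial of $\mathsf P_z\in\widetilde{\mathcal P}_{z+n-1}$ has even tangential order, hence odd and therefore positive normal order at most $n+z-1$. Part (2) is the same observation that on a product collar \eqref{operatordecomp} reduces to $A_0^2\equiv\Delta^\top$ (up to sign) modulo smoothing operators, so even powers are differential and have vanishing residue.

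The one point you leave implicit is the case $z<-n+1$, which Proposition~\ref{prop: structure of residue} does not cover. It is trivial: $\Lambda^z$ then has order below $-(n-1)$, so its symbol has no term of degree $-(n-1)$.
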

\begin{proof}
    For (1), the conclusion is trivial if $z<-n+1$, therefore we  assume $z\ge -n+1$. Suppose $n+z$ is even. Then for a monomial $\mathsf m$ of $\mathsf P_{z}$, its total order 
    \[ \ord^\top(\mathsf m) + \ord^\bot(\mathsf m)\equiv z+n-1 \equiv 1 \mod 2.\] 
    This implies that $\ord^\bot(\mathsf m)$ is nonzero since $\ord^\top(\mathsf m)$ is even for $\mathsf P_{z} \in\widetilde{\mathcal P}_{z+n-1}$. The evaluation of $\mathsf m$ at the boundary of $\Omega$ is zero by the assumption on $g$ and the fact that $A_0\equiv-\Lambda$. This proves (1). For (2), write $z=2k$ with $k\in\mathbb N_+$. By a consequence of \eqref{operatordecomp} (see also \cite[Theorem 2.1]{leeBFKGluingFormula2003}), we have $A_0^2\equiv\Delta^\top$ modulo smoothing operators, and thus $A_0^{2k}\equiv(\Delta^\top)^k$ modulo smoothing operators. But the operator $(\Delta^\top)^k$ is differential, so its Guillemin--Wodzicki residue vanishes. 
\end{proof}
   
Part (2) of Corollary \ref{cor: special manifolds} is an extension of \cite[Corollary 3.7]{polterovichHeatInvariantsSteklov2015}, which states that $\res \Lambda(\Omega,g)^z = 0$ for all even $z$, $2-n\le z\le 0$, in view of \eqref{eq: a_j as residue}.

From now on we fix $n=3$. Consider the invariant $\mathsf P_1$ in Proposition \ref{prop: structure of residue}. 
\begin{proposition}
    There exist $c_1,\dots,c_8\in\mathbb R$ such that
    \begin{equation}\label{eq: form of P(g)} 
	 	\begin{aligned}
	 		\mathsf P_1 = \ & c_1\mathsf H_1^3 + c_2\mathsf H_1\mathsf R^\top + c_3\mathsf H_1\mathsf H_2 + c_4\mathsf {\Delta^\top H_1} + c_5\langle \mathsf h, \mathsf {Rm}^\bot\rangle \\
	 		& + c_6\mathsf H_1\mathsf{Ric}^\bot + c_7\partial_n\mathsf{Ric}^\bot + c_8\partial_n\mathsf R
	 	\end{aligned}
	 \end{equation}
\end{proposition}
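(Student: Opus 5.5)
The plan is to show that the difference between $\mathsf P_1$ and a suitable combination of the eight listed invariants lies in $\mathcal J$ and is coordinate-independent, and then to conclude that it vanishes by Lemma~\ref{lem: independent of phi}.

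\emph{Step 1: $\mathsf P_1$ is invariant of the right weight.} By Proposition~\ref{prop: structure of residue} with $n=3$ and $z=1$, $\mathsf P_1$ is an invariant element of $\widetilde{\mathcal P}_{z+n-1}=\widetilde{\mathcal P}_3$.

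\emph{Step 2: reduction modulo $\mathcal J$.} By Proposition~\ref{prop: evaluation invariance is algebraic invariance}, the image of $\mathsf P_1$ in $\mathcal O$ is fixed by $\Ogroup{2}$. The image lies in $\widetilde{\mathcal O}_3$, which decomposes into the five type components $\mathcal O_T$. Each $\mathcal P_T$ is $\Ogroup{2}$-stable, so I expect each type component of the image to be $\Ogroup{2}$-fixed as well. The point to check is that the decomposition of $\mathcal O$ by types is compatible with the action, which is why I would work with $\Proj_T$ on $\mathcal P$ and its descent to $\mathcal O$. Each component is then the image of an invariant and lies in $\mathcal I_T$. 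Lemma~\ref{lem: basis of I_T}, summarized in Proposition~\ref{prop:decomforn=3}, then gives constants $c_1,\dots,c_8$ with
\[
\mathsf D:=\mathsf P_1-\bigl(c_1\mathsf H_1^3+c_2\mathsf H_1\mathsf R^\top+c_3\mathsf H_1\mathsf H_2+c_4\mathsf{\Delta^\top H_1}+c_5\langle\mathsf h,\mathsf{Rm}^\bot\rangle+c_6\mathsf H_1\mathsf{Ric}^\bot+c_7\partial_n\mathsf{Ric}^\bot+c_8\partial_n\mathsf R\bigr)\in\mathcal J .
\]

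\emph{Step 3: upgrading the congruence to an identity in $\mathcal P$.} I would apply Lemma~\ref{lem: independent of phi} to $\mathsf D$, so it suffices to show that $\mathsf D(\phi,g)(q,t)$ is independent of the boundary normal coordinate system $\phi$.

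For $\mathsf P_1$ this is exactly the strong invariance in Remark~\ref{rmk: strong invariance}: $\mathsf P_1(\phi,g)$ equals the coordinate-independent density $\res(A_t^1)$ divided by $|d\sigma_t|$, for any boundary normal chart, not only geodesic normal ones.

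For the eight comparison polynomials, I would observe that they are built as full tensorial contractions with $\mathsf g^{\alpha\beta}$ of genuine tensors. These are the curvature of $g$, the curvature of $g_t$, $\mathsf h$ and its tangential covariant derivatives, and $\partial_n$ of scalar functions. Their evaluation in any boundary normal chart therefore equals the corresponding geometric scalar (mean curvatures, scalar curvatures, $\Ric(\nu,\nu)$, its normal derivative, and so on). Since the normal coordinate $x^n$ and the vector $\partial_n$ are intrinsic to the collar, this scalar is independent of $\phi$.

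\emph{Main obstacle.} I expect the delicate point to be Step~3. One must check that each comparison polynomial is chart-independent for arbitrary boundary normal charts, not just for geodesic normal charts of $(\Sigma,g_t)$ as in the definition of invariance; the Christoffel-symbol definitions in $\mathcal P$ make this a direct verification. One must also make sure that Lemma~\ref{lem: independent of phi} is applied with the same class of charts. Once that is settled, Lemma~\ref{lem: independent of phi} gives $\mathsf D=0$, which is \eqref{eq: form of P(g)}.
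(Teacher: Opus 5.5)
Your proposal is correct and follows the paper's proof. You reduce $\mathsf P_1$ modulo $\mathcal J$ to the eight-term combination via Proposition~\ref{prop:decomforn=3}, then upgrade the congruence to an equality in $\mathcal P$ using the chart-independence of both sides (Remark~\ref{rmk: strong invariance} for $\mathsf P_1$) together with Lemma~\ref{lem: independent of phi}. Your two added checks---that each type component of the $\Ogroup{2}$-fixed image in $\mathcal O$ is itself fixed, and that the eight comparison invariants are chart-independent for arbitrary boundary normal charts---make explicit what the paper's two-line proof leaves implicit.
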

\begin{proof} 
According to Proposition \ref{prop:decomforn=3}, there exists $\mathsf U\in\mathcal P_3$ of the form of the right-hand side of \eqref{eq: form of P(g)} such that $\mathsf P_1\equiv\mathsf U\mod\mathcal J$. Since the evaluation of $\mathsf P_1(\phi,g)$ and $\mathsf U(\phi,g)$ at a point is independent of the choice of $\phi$, it follows from Lemma \ref{lem: independent of phi} that $\mathsf P_1 = \mathsf U$.
\end{proof}
It remains to compute these coefficients $c_1,\dots,c_8$.

\section{Proof of Theorem \ref{thm: main thm b1 b2}}\label{sec: proof of thm}

This section is devoted to proving Theorem \ref{thm: main thm b1 b2}. 

\subsection{Computation of coefficients}

We determine the eight coefficients $c_1,\dots,c_8$ in \eqref{eq: form of P(g)} by evaluating $\mathsf P=\mathsf P_1$ on several model metrics.  Throughout the computations below, all geometric quantities and symbol coefficients are evaluated at the boundary $x^n=0$ unless otherwise specified.

\subsubsection{Euclidean balls}
Let $\Omega=\mathbb B^3$ be the unit ball in $\mathbb R^3$ with its Euclidean metric.  Its Steklov eigenvalues are $k\in\mathbb N_0$, with multiplicity $2k+1$, and hence
\[
 \Tr(e^{-t\Lambda})=\sum_{k=0}^\infty(2k+1)e^{-tk}
 =\frac{1+e^{-t}}{(1-e^{-t})^2}
 =\frac2{t^2}+\frac1t+\frac13+O(t).
\]
Thus the coefficient of $t\log t$ vanishes, and therefore
$\Res(A_0(\mathbb B^3,g_{\mathbb B^3}))=0$ by  \eqref{eq: bk-residue-introduction}. On the unit sphere we have 
\[H_1=1, \quad   H_2=1, \quad R^\top=2,\] 
and all ambient curvature terms vanish.  Consequently,
\[
 0=\Res(A_0)=\int_{\mathbb S^2}\mathsf P(g_{\mathbb B^3})\,|d\sigma|
 =4\pi(c_1+2c_2+c_3),
\]
so
\begin{equation}\label{eq: c_1-c_8 1}
 c_1+2c_2+c_3=0.
\end{equation}

\subsubsection{A warped product of a closed surface and an interval}
Let $(\Sigma,g_\Sigma)$ be a closed surface and let $u:[0,1)\to\mathbb R_{>0}$ be smooth.  On
\[
 \Omega=\Sigma\times[0,1),\qquad g_u=u(x^n)^2g_\Sigma+(dx^n)^2,
\]
a direct computation gives
\begin{equation}\label{eq: curvatures of g_u}
\begin{aligned}
 H_1&=-\frac{u'}u,
 &R^\top&=\frac{R^\Sigma}{u^2},\\
 H_2&=\left(\frac{u'}u\right)^2,
 &\Delta^\top H_1&=0,\\
 \langle h,\Rm^\bot\rangle&=\frac{2u'u''}{u^2},
 &H_1\Ric^\bot&=\frac{2u'u''}{u^2},\\
 \partial_n\Ric^\bot&=\frac{2u'u''}{u^2}-\frac{2u'''}u,&
 \partial_nR&=-\frac{2u'}{u^3}R^\Sigma
 +4\left(\frac{u'}u\right)^3-\frac{4u'''}u.
\end{aligned}
\end{equation}
It follows from \eqref{eq: form of P(g)} that
\begin{equation}\label{eq: form of P(g) warped product}
 \mathsf P(g_u)
 =B_1\frac{u'}{u^3}R^\Sigma
 +B_2\left(\frac{u'}u\right)^3
 +B_3\frac{u'u''}{u^2}
 +B_4\frac{u'''}u,
\end{equation}
where
\begin{align}
 B_1&=-c_2-2c_8,\label{eq: c_1-c_8 2}\\
 B_2&=-c_1-c_3+4c_8,\label{eq: c_1-c_8 3}\\
 B_3&=2c_5+2c_6+2c_7,\label{eq: c_1-c_8 4}\\
 B_4&=-2c_7-4c_8.\label{eq: c_1-c_8 5}
\end{align}
Notice in particular that the $u'''$-term comes from
$c_7\partial_n\Ric^\bot+c_8\partial_nR$.

\begin{lemma}\label{lem: B_1, B_2, B_3, B_4}
One has
\[
 B_1=B_2=0,\qquad B_3=-\frac\pi2,\qquad B_4=-\frac\pi4.
\]
\end{lemma}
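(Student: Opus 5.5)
The plan is to avoid the general Lee--Uhlmann recursion and instead use the fact that, on the warped product $g_u=u(x^n)^2g_\Sigma+(dx^n)^2$, the whole family $A_{x^n}$ is a function of the fixed operator $L:=-\Delta_{g_\Sigma}$ on $\Sigma$. For $x^n=t$ the induced metric is $g_t=u(t)^2g_\Sigma$, so in \eqref{operatordecomp} we have $Q=u^{-2}L$ and $E=-2u'/u$, both functions of $L$. Also $i[D_{x^n},A]=\partial_{x^n}A$. I will therefore look for $A_t=F(t,L)$, where $F(t,\lambda)$ solves the scalar Riccati equation
\[
F^2-u^{-2}\lambda+\partial_tF+\frac{2u'}{u}F=0 .
\]
The signs follow the conventions of \eqref{eq: r_1}, so that the leading term is $-u^{-1}\sqrt\lambda$. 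I will solve this equation asymptotically as $\lambda\to\infty$ in the form
\[
F\sim -u^{-1}\lambda^{1/2}+f_0(t)+f_{-1}(t)\lambda^{-1/2}+f_{-2}(t)\lambda^{-1}+\cdots,
\]
which matches the symbol expansion of Proposition \ref{prop: characterization of symbol}. Functional calculus for $L$ (a classical elliptic operator) gives $A_t\equiv\sum_k f_k(t)L^{k/2}$ modulo lower order. By uniqueness of the recursion \eqref{eq: r_1}--\eqref{eq: r_{1 - j}}, this is the Lee--Uhlmann operator.

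Matching powers of $\lambda$ is routine:
\begin{itemize}
\item The order $\lambda^{1/2}$ gives $f_0=-u'/(2u)$.
\item The order $\lambda^{0}$ gives $f_{-1}=\tfrac u2\bigl(f_0^2+f_0'+2u'f_0/u\bigr)$.
\item The order $\lambda^{-1/2}$ gives $f_{-2}=\tfrac u2\bigl(2f_0f_{-1}+f_{-1}'+2u'f_{-1}/u\bigr)$.
\end{itemize}
Thus $f_{-2}$ is an explicit polynomial in $u'/u$, $u''/u$, $u'''/u$ multiplied by powers of $u$. A scaling check confirms that $u^{-2}f_{-2}$ has total derivative weight $3$.

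The key step is to compute the residue density of each term $f_k(t)L^{k/2}$. Since the $f_k$ are constant along $\Sigma$, I need the local residue density of the powers $L^{k/2}$ on a closed surface. For the differential powers ($k$ even, $k\ge0$) it vanishes. For non-integer powers it vanishes locally, by the local form of \eqref{eq: residue-zeta bridge}: the local residue density of $L^{-s}$ is proportional to the local heat-kernel density coefficient divided by $\Gamma(s)$. On a surface the heat kernel of $L$ has only integer powers $t^{-1+j}$, so this density can be nonzero only for $s=1$. For $L^{-1}$, the symbol $|\xi|^{-2}_{g_\Sigma}$ integrated over the unit circle gives the density $2\pi|d\sigma_{g_\Sigma}|=2\pi u^{-2}|d\sigma_t|$. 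Hence
\[
\mathsf P(g_u)=2\pi\,u^{-2}f_{-2}(t).
\]
This expression contains no $R^\Sigma$ at all, so $B_1=0$. The remaining coefficients are read off by expanding $2\pi u^{-2}f_{-2}$ in the monomials $(u'/u)^3$, $u'u''/u^2$ and $u'''/u$ of \eqref{eq: form of P(g) warped product}. I expect this to give $B_2=0$, $B_3=-\pi/2$ and $B_4=-\pi/4$. The $u'''$ coefficient comes only from the differentiation $f_{-1}'$ of the $f_0'$ term, which is a convenient check on $B_4$.

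The main obstacle is making the identification of the residue density of $F(t,L)$ with $2\pi u^{-2}f_{-2}$ rigorous. Two points need care. First, the locally vanishing residue densities of $L^{\pm1/2}$ (and of the lower-order symbols of $L^{1/2}$, which carry curvature of $g_\Sigma$) must be justified pointwise, not merely after integration over $\Sigma$. My first approach is to invoke the local heat-coefficient argument above. As a fallback, I would use the explicit symbol of $L^{1/2}$ in normal coordinates together with the parity of the integral map $I$. Second, the overall sign must follow the convention $\mathsf P=\mathsf P_1$ for $A_t$ (not for $-A_t$). I will fix this sign by checking the leading term against $\sigma_1=-\rho$.
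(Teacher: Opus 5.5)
Your proposal is correct. It computes the same normal coefficients as the paper, but it handles the curvature-of-$g_\Sigma$ contributions, and hence $B_1$, by a different route.

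The paper runs the Lee--Uhlmann recursion only on the ``normal part'' of the symbol, in geodesic normal coordinates for $g_\Sigma$. Carrying out your expansion gives exactly the same coefficients: $f_{-1}=-(u'^2+2uu'')/(8u)$ and $f_{-2}=-(2u'u''+uu''')/8$, with the $(u')^3$ terms cancelling. This yields $\mathsf P_\bot(g_u)=-\frac\pi2\frac{u'u''}{u^2}-\frac\pi4\frac{u'''}{u}$, so your expected values of $B_2,B_3,B_4$ do come out. The paper then argues by type decomposition that the tangential jets can only contribute a constant multiple of $u'R^\Sigma/u^3$. It fixes that multiple, namely $B_1$, with a separate model: the collar of the unit Euclidean ball, where rotational symmetry and the vanishing $t\log t$ coefficient of the explicit heat trace force the density to vanish pointwise.

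Your approach instead uses three facts:
\begin{enumerate}
\item $Q=u^{-2}L$ and $E=-2u'/u$ commute with $L=-\Delta_{g_\Sigma}$, so a truncated sum $\sum_k f_k(t)L^{k/2}$ solves \eqref{operatordecomp} modulo low order.
\item By uniqueness of the recursion given the principal symbol $-\sqrt{q_2}$, this sum has the Lee--Uhlmann symbol through degree $-2$.
\item The local residue densities of $L^{\pm1/2}$ vanish pointwise, so only $f_{-2}L^{-1}$ contributes.
\end{enumerate}
This gives the closed form $\mathsf P(g_u)=2\pi u^{-2}f_{-2}$, which contains no $R^\Sigma$ at all.

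What your route buys is self-containment. It replaces the paper's brief type-decomposition claim about the tangential components with an explicit proof that they vanish identically, and it obtains $B_1=0$ without appeal to the ball spectrum. What the paper's route buys is uniformity: it stays inside the general symbolic framework, which is needed anyway for the models $g_L$ and $g_\lambda$. There the slice metrics are not constant multiples of a fixed metric, so no such commuting functional calculus is available.

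For the vanishing of $\operatorname{res}_x(L^{\pm1/2})$, the parity argument is the cleaner one to write down. For a differential operator $L$, the symbol terms of its powers satisfy $\sigma_{2s-j}(x,-\xi)=(-1)^j\sigma_{2s-j}(x,\xi)$. For $s=\pm\frac12$, the degree $-2$ term has $j$ odd, so its integral over the circle vanishes.
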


\begin{proof}
Let $\mathsf p_\bot$ denote the component of a metric polynomial containing no tangential metric jets, and use the analogous notation for symbols.  Choose geodesic normal coordinates $\phi$ for $g_\Sigma$ at a point $q\in\Sigma$, and put $r=|\xi|_{g_\Sigma}$. Denote $s_j := \mathsf s_{1,\bot}(\phi,g)$. The normal part of the Lee--Uhlmann recursion gives
\begin{align*}
 s_1 &= -\frac r u,\\
 s_0 &= -\frac{u'}{2u},\\
 s_{-1} &= -\frac{(u')^2+2uu''}{8ur},\\
 s_{-2} &= -\frac{2u'u''+uu'''}{8r^2}.
\end{align*}

Since the boundary density in these coordinates is $\mathsf G=u^2$, integration over $\{r=1\}$ yields
\begin{equation}\label{eq: P_bot(g_u)}
 \mathsf P_\bot(g_u)
 =-\frac\pi2\frac{u'u''}{u^2}-\frac\pi4\frac{u'''}u.
\end{equation}
The remaining components of $\mathsf P(g_u)$ contain tangential second derivatives of $g_\Sigma$.  By the type decomposition and the two-dimensional curvature identities, the resulting complete contraction is necessarily a constant multiple of $u'R^\Sigma/u^3$.  Comparing with \eqref{eq: form of P(g) warped product} therefore gives
\[
 B_2=0,\qquad B_3=-\frac\pi2,\qquad B_4=-\frac\pi4.
\]

To determine $B_1$, take $\Sigma=\mathbb S^2$ with the unit round metric and $u(x^n)=1-x^n$.  This collar is isometric to a collar of the Euclidean unit ball.  The residue density is rotationally invariant and has zero integral, so it vanishes pointwise.  Since $R^\Sigma=2$, substituting $u(0)=1$, $u'(0)=-1$, and $u''(0)=u'''(0)=0$ into \eqref{eq: form of P(g) warped product} gives
\[
 -2B_1-B_2=0.
\]
As $B_2=0$, we obtain $B_1=0$.
\end{proof}

\subsubsection{A one-directional warped flat torus}

Consider
\[
 \Omega=\mathbb S^1\times\mathbb S^1\times[0,1),\qquad
 g_v=v(x^n)^2g_{\mathbb S^1}+g_{\mathbb S^1}+(dx^n)^2,
\]
where $v>0$ is smooth.  We may assume $v(0)=1$, while the three numbers
$a=v'(0)$, $b=v''(0)$, and $d=v'''(0)$ remain arbitrary.  At the boundary,
\begin{equation}\label{eq: curvatures of g_v}
\begin{aligned}
 H_1&=-\frac{v'}{2v},
 &R^\top&=0,\\
 H_2&=0,
 &\Delta^\top H_1&=0,\\
 \langle h,\Rm^\bot\rangle&=\frac{v'v''}{v^2},
 &H_1\Ric^\bot&=\frac{v'v''}{2v^2},\\
 \partial_n\Ric^\bot&=\frac{v'v''}{v^2}-\frac{v'''}v,&
 \partial_nR&=\frac{2v'v''}{v^2}-\frac{2v'''}v.
\end{aligned}
\end{equation} 
Hence
\[
 \mathsf P(g_v)=C_1\left(\frac{v'}v\right)^3
 +C_2\frac{v'v''}{v^2}+C_3\frac{v'''}v,
\]
with
\begin{align}
 C_1&=-\frac{c_1}{8},\label{eq: c_1-c_8 7}\\
 C_2&=c_5+\frac{c_6}{2}+c_7+2c_8,\label{eq: c_1-c_8 8}\\
 C_3&=-c_7-2c_8.\label{eq: c_1-c_8 9}
\end{align}

\begin{lemma}
One has
\[
 C_1=\frac\pi{64},\qquad C_2=-\frac\pi{32},\qquad C_3=-\frac\pi8.
\]
\end{lemma}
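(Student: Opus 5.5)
The plan is to compute $\mathsf P(g_v)$ at $x^n=0$ directly from the Lee--Uhlmann recursion \eqref{eq: r_1}--\eqref{eq: r_{1 - j}}, and then read off the coefficients of $a^3$, $ab$ and $d$. Since $v(0)=1$, these coefficients are $C_1$, $C_2$ and $C_3$ respectively.

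The metric $g_v$ is special in two ways. Take the standard flat angular coordinates $(x^1,x^2)$ on $\mathbb S^1\times\mathbb S^1$; these form a boundary normal coordinate system. In them $g_{x^n}=\operatorname{diag}(v(x^n)^2,1)$ is independent of the tangential variables. Consequently every term in the recursion containing $\partial_x^J$ with $|J|\ge1$ vanishes, and so does $q_1$. The recursion then collapses to an ODE-type algebraic recursion in $x^n$ with $\xi$ as a parameter:
\[
 s_1=-r,\qquad r:=\sqrt{v^{-2}\xi_1^2+\xi_2^2},\qquad E=-\frac{v'}{v},
\]
\[
 s_0=\frac1{2r}\bigl(\partial_{x^n}s_1-Es_1\bigr),\qquad
 s_{1-j}=\frac1{2r}\Bigl(\sum_{\substack{k+l=j\\ 1\le k,l\le j-1}}s_{1-k}s_{1-l}+\partial_{x^n}s_{2-j}-Es_{2-j}\Bigr)\quad(j\ge2).
\]
I will compute $s_0$, $s_{-1}$ and $s_{-2}$ as explicit functions of $x^n$ and $\xi$. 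I will keep the full $x^n$-dependence until the last step, because $s_{-2}$ needs $\partial_{x^n}s_{-1}$, which needs $\partial_{x^n}s_0$, and so on. This is exactly why up to three normal derivatives of $v$ appear. The only needed derivative formula is
\[
 \partial_{x^n}r=-\frac{v'\xi_1^2}{v^3r}.
\]
As a consistency check, at $x^n=0$ one gets $s_0=\tfrac a2\bigl(\xi_1^2/r^2-1\bigr)$.

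After setting $x^n=0$, $v=1$, $v'=a$, $v''=b$, $v'''=d$, the symbol $s_{-2}$ is a homogeneous function of degree $-2$ in $\xi$. It has the form $r^{-2}$ times a polynomial in $\xi_1^2/r^2$, with coefficients in $a^3$, $ab$ and $d$; one checks that no other monomials can occur, by weight counting as in Proposition \ref{prop: characterization of symbol}. Writing $\xi_1=\cos\theta$ on $r=1$, the density is $\mathsf G=v=1$, so the residue density is $\int_0^{2\pi}s_{-2}\,d\theta$. This is evaluated with
\[
 \int_0^{2\pi}\cos^{2k}\theta\,d\theta=2\pi\frac{(2k-1)!!}{(2k)!!}.
\]
Matching with $\mathsf P(g_v)=C_1a^3+C_2ab+C_3d$ gives the three constants. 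As a sanity check, the $d$-coefficient should come only from $\partial_{x^n}^2$ acting through $s_0$, and should be comparable with the $u'''$ term in \eqref{eq: P_bot(g_u)}.

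I expect the main obstacle to be the bookkeeping in the second and third normal derivatives. The factor $v^{-2}$ inside $r$ produces many terms through the chain rule, and $s_{-1}$ already contains the quadratic term $s_0^2/(2r)$, whose normal derivative feeds into $s_{-2}$ along with the cross term $s_0s_{-1}/r$. To keep this manageable, I will introduce the variable $\tau=v^{-2}\xi_1^2/r^2\in[0,1]$ and compute $\partial_{x^n}\tau=-2(v'/v)\tau(1-\tau)$. Every $s_j$ is then $r^{j}$ times a polynomial in $\tau$ and the logarithmic derivatives of $v$. In this form each differentiation is a mechanical polynomial operation, and the final angular integral reduces to the moments $\int\tau^k\,d\theta$.
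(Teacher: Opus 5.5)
Your plan is correct and is the paper's own argument. In the flat angular boundary normal coordinates all tangential jets vanish, so the Lee--Uhlmann recursion reduces to its normal part; integrating the degree $-2$ symbol over the unit circle, with $\mathsf G=1$ at $x^n=0$, gives $\mathsf P(g_v)=\frac{\pi}{64}(a^3-2ab-8d)$. Carrying out your $\tau$-bookkeeping, with $\sigma=1-\tau=\xi_2^2/r^2$ at the boundary, gives $s_{-2}=-\frac{\sigma}{8r^2}\bigl[a^3(15\sigma^2-18\sigma+4)+ab(7-9\sigma)+d\bigr]$, which is exactly the symbol displayed in the paper.
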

\begin{proof}
Since $\mathsf g_{\alpha\beta,\gamma\delta}(g_v) = \mathsf g_{\alpha\beta,\gamma\delta n}(g_v) =  0$, we have $\mathsf P(g_v)=\mathsf P_\bot(g_v)$. Therefore, by a direct computation using the normal part of the Lee--Uhlmann recursion, at the boundary point, the degree $-2$ symbol is
\[
 r_{-2}=-\frac{\xi_2^2}{8r^8}\Bigl[
 a^3(\xi_2^4-10\xi_1^2\xi_2^2+4\xi_1^4)
 +ab(-2\xi_2^4+5\xi_1^2\xi_2^2+7\xi_1^4)
 +d\,r^4\Bigr],
\]
where $r=(\xi_1^2+\xi_2^2)^{1/2}$.  Using the standard moments on the unit circle gives
\[
 \mathsf P(g_v)=\int_{r=1}r_{-2}\,d\xi
 =\frac\pi{64}(a^3-2ab-8d).
\]
The asserted values of $C_1,C_2,C_3$ follow because $a,b,d$ are arbitrary.  Equivalently, without the normalization $v(0)=1$, the middle term is
$-2v'v''/v^2$.
\end{proof}

\subsubsection{A conformal tangential jet}

Consider 
\[
 \Omega = \mathbb R^2\times [0,1),\quad g_L=e^{F_L}\left((dx^1)^2+(dx^2)^2\right)+(dx^n)^2,
 \quad F_L=L(x^1)^2x^n,
\]
where $L\in\mathbb R$. At the point $q=(0,0,0)$ one has
\begin{equation}\label{eq: curvatures of g_L}
    \Delta^\top H_1=-L,\qquad\partial_n R=-2L, 
\end{equation}
while all the other invariants in \eqref{eq: form of P(g)} vanish.  Indeed, along the boundary, $h_{\alpha\beta}=-(L/2)(x^1)^2\delta_{\alpha\beta}$, while the scalar curvature of the slice metric
$e^{F_L}((dx^1)^2+(dx^2)^2)=e^{2\varphi_L}g_{\mathrm{eucl}}$ is
$-2e^{-2\varphi_L}\Delta\varphi_L$, with $\varphi_L=F_L/2$.
Consequently,
\begin{equation}\label{eq: conformal jet invariant evaluation}
 \mathsf P(g_L)(q)=-L(c_4+2c_8).
\end{equation}

We now compute the same quantity from the symbol recursion.  For notational convenience, set
\[y=x^1,\quad s = x^n, \quad r=(\xi_1^2+\xi_2^2)^{1/2}.\]
Since
\[q_2=e^{-Ly^2s}r^2,\qquad q_1=0,\qquad E=-Ly^2,\]
the terms linear in $L$ in the first three symbol coefficients are
\begin{align*}
 s_1&=-r+\frac L2y^2sr+O(L^2),\\
 s_0&=L\left(\frac{iys\xi_1}{2r}-\frac{y^2}{4}\right)+O(L^2),\\
 s_{-1}&=\frac{Ls}{4r^3}(\xi_2^2-\xi_1^2)+O(L^2).
\end{align*}
At $q$, every omitted term in the recursion for $s_{-2}$ either is quadratic in $L$ or contains a factor $y$ or $s$.  Since
$D_{\xi_1}^2(-r)=\xi_2^2/r^3$, we obtain
\begin{align*}
    s_{-2}(q,\xi)
     =\frac1{2r}\left(
    \frac12D_{\xi_1}^2(-r)\,\partial_y^2s_0
    +\partial_s s_{-1}\right)_{q}+O(L^2) =-\frac{L\xi_1^2}{8r^4}+O(L^2).
   \end{align*}
Therefore
\[
 \mathsf P(g_L)(q)=\int_{r=1}s_{-2}(q,\xi)\,d\xi=-\frac{\pi L}{8}.
\]
Comparing with \eqref{eq: conformal jet invariant evaluation} yields the additional relation
\begin{equation}\label{eq: c_1-c_8 6}
 c_4+2c_8=\frac\pi8.
\end{equation}

\subsubsection{The product of a planar region and a circle}

Let $U\subset\mathbb R^2$ be a bounded smooth planar domain and set
$\Omega=U\times\mathbb S^1$ with the product metric.  If
$\mathbf c$ is an arc-length parametrization of $\partial U$, write $\lambda$ for its signed curvature.  The associated boundary normal coordinates give
\[
 g_\lambda=(1-\lambda(x^1)x^n)^2(dx^1)^2+(dx^2)^2+(dx^n)^2.
\]
At the boundary,
\[
 H_1=\frac\lambda2,\qquad H_2=R^\top=0,
 \qquad \Delta^\top H_1=\frac{\lambda''}{2},
\]
and all ambient curvature terms vanish.  Thus the component of type $(2,1)$ satisfies
\[
 \Proj_{(2,1)}\mathsf P(g_\lambda)=\frac{c_4}{2}\lambda''.
\]
We claim that
\begin{equation}\label{eq: planar circle type computation}
 \Proj_{(2,1)}\mathsf P(g_\lambda)=-\frac\pi{32}\lambda''.
\end{equation}

To compute the universal coefficient of $\lambda''$, it is enough to prescribe at a fixed point
\[
 \lambda(0)=\lambda'(0)=0,\qquad \lambda''(0)=L;
\]
for instance, take $\lambda(x^1)=L(x^1)^2/2$ near the point.  Put
$y=x^1$, $s=x^n$, $r=(\xi_1^2+\xi_2^2)^{1/2}$, and
$\theta=1-Ly^2s/2$.  The terms linear in $L$ in the Lee--Uhlmann recursion are
\begin{align*}
 s_1&=-r-\frac{Ly^2s\xi_1^2}{2r}+O(L^2),\\
 s_0&=L\left(
 \frac{iys\xi_1\xi_2^2}{2r^3}
 +\frac{y^2\xi_2^2}{4r^2}\right)+O(L^2),\\
 s_{-1}&=L\left(
 -\frac{s\xi_1^2\xi_2^2}{2r^5}
 +\frac{iy\xi_1\xi_2^2}{2r^4}\right)+O(L^2).
\end{align*}
Here we used
$q_2=\xi_1^2/\theta^2+\xi_2^2$,
$q_1=-iLys\xi_1+O(L^2)$, and
$E=Ly^2/2+O(L^2)$.
At the chosen point, the recursion for $s_{-2}$ reduces to
\begin{align*}
    s_{-2}(0,\xi)
    &=\frac1{2r}\left(
    D_{\xi_1}(-r)\,\partial_ys_{-1}
    +\frac12D_{\xi_1}^2(-r)\,\partial_y^2s_0
    +\partial_ss_{-1}\right)_{y=s=0}+O(L^2)\\
    &=\frac{L}{8r^6}\bigl(\xi_2^4-4\xi_1^2\xi_2^2\bigr)+O(L^2),
   \end{align*}
where $D_{\xi_1}(-r)=i\xi_1/r$ and $D_{\xi_1}^2(-r)=\xi_2^2/r^3$.
Finally,
\[
 \int_{r=1}\xi_2^4\,d\xi=\frac{3\pi}{4},\qquad
 \int_{r=1}\xi_1^2\xi_2^2\,d\xi=\frac\pi4,
\]
so
\[
 \Proj_{(2,1)}\mathsf P(g_\lambda)
 =\int_{r=1}s_{-2}(0,\xi)\,d\xi
 =-\frac{\pi L}{32}.
\]
This proves \eqref{eq: planar circle type computation}, and comparison with the invariant expression gives
\begin{equation}\label{eq: c_1-c_8 10}
 \frac{c_4}{2}=-\frac\pi{32}.
\end{equation}

\subsection{Proof of  Theorem \ref{thm: main thm b1 b2} and  Corollary \ref{cor: b_1 b_2 constant curvature} }
\begin{proof}[Proof of Theorem \ref{thm: main thm b1 b2}]

Equation \eqref{eq: c_1-c_8 7} gives $c_1=-\pi/8$, while \eqref{eq: c_1-c_8 10} gives $c_4=-\pi/16$.  Combining the latter with \eqref{eq: c_1-c_8 6} yields $c_8=3\pi/32$.  Equations \eqref{eq: c_1-c_8 2}, \eqref{eq: c_1-c_8 3}, and \eqref{eq: c_1-c_8 9} then give
\[
 c_2=-\frac{3\pi}{16},\qquad
 c_3=\frac\pi2,\qquad
 c_7=-\frac\pi{16}.
\]
Finally, \eqref{eq: c_1-c_8 4} and \eqref{eq: c_1-c_8 8} form a nonsingular two-by-two system for $c_5,c_6$, whose solution is
\[
 c_5=-\frac\pi8,\qquad c_6=-\frac\pi{16}.
\]
Thus
\begin{align*}
 c_1&=-\frac\pi8,& c_2&=-\frac{3\pi}{16},& c_3&=\frac\pi2,& c_4&=-\frac\pi{16},\\
 c_5&=-\frac\pi8,& c_6&=-\frac\pi{16},& c_7&=-\frac\pi{16},& c_8&=\frac{3\pi}{32}.
\end{align*}
The remaining relations, namely \eqref{eq: c_1-c_8 1} and \eqref{eq: c_1-c_8 5}, are then automatically satisfied and provide consistency checks on the model computations.  Substitution into \eqref{eq: form of P(g)} gives the explicit formula for $\mathsf P_1$.

Recall that
\[
 \res(A_t(\Omega,g)^2)=\mathsf P_2(g)\big|_{x^n=t}|d\sigma_t|,
 \qquad \mathsf P_j=\mathsf G^{-1}\mathsf Q_j,
 \qquad \mathsf Q_j=I(\mathsf s_{j,-2}).
\]
Integrating \eqref{eq: sigma A^2 and sigma A} over $\{|\xi|=1\}$ gives
\[
 \mathsf Q_2=\mathsf E\mathsf Q_1-\partial_n\mathsf Q_1.
\]
Since $\partial_n\mathsf G=-\mathsf E\mathsf G$, we obtain
\begin{equation}\label{eq: P2 = 2EP1 - dn P1}
 \mathsf P_2
 =\mathsf G^{-1}(\mathsf E\mathsf Q_1-\partial_n\mathsf Q_1)
 =2\mathsf E\mathsf P_1-\partial_n\mathsf P_1.
\end{equation}
Here $\mathsf E=2\mathsf H_1$.  It remains to differentiate the eight invariant terms in \eqref{eq: form of P(g)}.  The required identities are

\begin{align*}
 \partial_n(\mathsf H_1^3)
 &=6\mathsf H_1^4-3\mathsf H_1^2\mathsf H_2
   +\frac32\mathsf H_1^2\mathsf{Ric}^\bot,\\[2mm]
 \partial_n(\mathsf H_1\mathsf R^\top)
 &=2\mathsf H_1^2(\mathsf R^\top+2\mathsf{Ric}^\bot+2\mathsf H_2)
   -\mathsf H_2\mathsf R^\top+\frac12\mathsf{Ric}^\bot\mathsf R^\top\\
 &\quad+\mathsf H_1\partial_n\mathsf R
   -2\mathsf H_1\partial_n\mathsf{Ric}^\bot
   -2\mathsf H_1\langle\mathsf h,\mathsf{Rm}^\bot\rangle,\\[2mm]
 \partial_n(\mathsf H_1\mathsf H_2)
 &=4\mathsf H_1^2\mathsf H_2-\mathsf H_2^2
   +\frac12\mathsf H_2\mathsf{Ric}^\bot
   +2\mathsf H_1^2\mathsf{Ric}^\bot
   -\mathsf H_1\langle\mathsf h,\mathsf{Rm}^\bot\rangle,\\[2mm]
 \partial_n(\mathsf\Delta^\top\mathsf H_1)
 &=2\langle\mathsf h,\operatorname{Hess}^\top\mathsf H_1\rangle
   +4\mathsf H_1\mathsf\Delta^\top\mathsf H_1
   +2|\nabla^\top\mathsf H_1|^2\\
 &\quad+2\langle\dvg^\top\mathsf h,\nabla^\top\mathsf H_1\rangle
   -\mathsf\Delta^\top\mathsf H_2
   +\frac12\mathsf\Delta^\top\mathsf{Ric}^\bot,\\[2mm]
 \partial_n\langle\mathsf h,\mathsf{Rm}^\bot\rangle
 &=\langle\mathsf h^2,\mathsf{Rm}^\bot\rangle
   +\langle\mathsf{Rm}^\bot,\mathsf{Rm}^\bot\rangle
   +\langle\mathsf h,\mathsf{Rm}_{;n}^\bot\rangle,\\[2mm]
 \partial_n(\mathsf H_1\mathsf{Ric}^\bot)
 &=2\mathsf H_1^2\mathsf{Ric}^\bot
   -\mathsf H_2\mathsf{Ric}^\bot
   +\frac12(\mathsf{Ric}^\bot)^2
   +\mathsf H_1\partial_n\mathsf{Ric}^\bot,\\[2mm]
 \partial_n(\partial_n\mathsf{Ric}^\bot)
 &=\partial_n^2\mathsf{Ric}^\bot,\\
 \partial_n(\partial_n\mathsf R)
 &=\partial_n^2\mathsf R.
\end{align*}

See Section \ref{subsec: exp of inv poly} for the definition of the terms appearing above and Appendix \ref{apd: computation} for proofs. Substituting the values of $c_1,\dots,c_8$ and the preceding normal-derivative identities into \eqref{eq: P2 = 2EP1 - dn P1}, and then collecting equal complete contractions, gives
\begin{equation}
\label{eq: general P_2}
\begin{aligned}
\mathsf P_2 = &\ \frac{\pi}{4}\mathsf H_1^4 + \frac{3\pi}{8}\mathsf H_1^2\mathsf H_2 - \frac{3\pi}{8}\mathsf H_1^2\mathsf R^\top - \frac{3\pi}{16}\mathsf H_1^2\mathsf{Ric}^\bot - \frac{3\pi}{16}\mathsf H_2\mathsf R^\top \\
&\ + \frac{3\pi}{32}\mathsf{Ric}^\bot\mathsf R^\top + \frac{\pi}{2}\mathsf H_2^2 - \frac{5\pi}{16}\mathsf H_2\mathsf{Ric}^\bot + \frac{\pi}{32}(\mathsf{Ric}^\bot)^2 - \frac{3\pi}{8}\mathsf H_1\langle\mathsf h,\mathsf{Rm}^\bot\rangle\\
&\ - \frac{9\pi}{16}\mathsf H_1\partial_n\mathsf{Ric}^\bot + \frac{9\pi}{16}\mathsf H_1\partial_n\mathsf R + \frac{\pi}{8}\langle\mathsf h,\Hess^\top\mathsf H_1\rangle + \frac{\pi}{8}|\nabla^\top\mathsf H_1|^2\\
&\ + \frac{\pi}{8}\langle\dvg^\top\mathsf h,\nabla^\top\mathsf H_1\rangle - \frac{\pi}{16}\Delta^\top\mathsf H_2 + \frac{\pi}{32}\Delta^\top\mathsf{Ric}^\bot + \frac{\pi}{8}\langle\mathsf h^2,\mathsf{Rm}^\bot\rangle\\
&\ + \frac{\pi}{8}\langle\mathsf{Rm}^\bot,\mathsf{Rm}^\bot\rangle + \frac{\pi}{8}\langle\mathsf h,\mathsf{Rm}^\bot_{;n}\rangle + \frac{\pi}{16}\partial_n^2\mathsf{Ric}^\bot - \frac{3\pi}{32}\partial_n^2\mathsf R.
\end{aligned}
\end{equation}
Then Theorem \ref{thm: main thm b1 b2} follows from \eqref{eq: bk-residue-introduction} and the fact that $\Lambda(\Omega,g)\equiv -A_0(\Omega,g)$ modulo smoothing operators. 
\end{proof}

\begin{proof}[Proof of Corollary \ref{cor: b_1 b_2 constant curvature}]
Assume now that $\Omega$ has constant sectional curvature $\kappa$.
Then
\[\Ric^\bot=2\kappa, \qquad\Rm^\bot=\kappa g, \qquad R^\top=2\kappa+2H_2,\]
and all covariant derivatives of the ambient curvature vanish. Moreover,
\[\langle h,\Rm^\bot\rangle=2\kappa  H_1,\]
\[\langle  h^2,\Rm^\bot\rangle=\kappa(4 H_1^2-2  H_2), \qquad\langle\Rm^\bot,\Rm^\bot\rangle=2\kappa^2.\]
The Codazzi equation gives
\[\dvg^\top h=2\nabla^\top H_1.\]

Substitution into \eqref{eq: general P_2} yields the pointwise identities
\[\widehat b_1 = \frac{\pi}{8} H_1^3 - \frac{\pi}{8}H_1H_2 + \frac{\pi}{16}\Delta^\top H_1 + \frac{3\pi}{4}H_1\kappa\]
and
\begin{align*}
-\widehat b_2 = &\
\frac{\pi}{4} H_1^4 - \frac{3\pi}{8} H_1^2  H_2 + \frac{\pi}{8} H_2^2 - \frac{11\pi}{8}\kappa H_1^2 - \frac{7\pi}{8}\kappa H_2 + \frac{3\pi}{4}\kappa^2 \\
&\ +\frac{\pi}{8}\langle h,\Hess^\top H_1\rangle + \frac{3\pi}{8}|\nabla^\top H_1|^2 - \frac{\pi}{16}\Delta^\top H_2.
\end{align*} 

Note that
\[\int_\Sigma\langle h,\operatorname{Hess}^\top H_1\rangle = -2\int_\Sigma|\nabla^\top H_1|^2,\]
and $\int_\Sigma\Delta^\top H_2=0$.  
Finally, the Gauss equation and the Gauss--Bonnet theorem give
\[\int_\Sigma H_2\,|d\sigma_g|=2\pi\chi(\Sigma)-\kappa\Area(\Sigma,g).\] 
The integral formulas \eqref{eq: b1} and \eqref{eq: b2} follow.
\end{proof}

\section{Applications to spectral geometry}\label{sec:apptoSpecGeom}

\subsection{Steklov spectrum and the boundary topology}

Let $(\Omega,g)$ be a compact $3$-dimensional Riemannian manifold with constant curvature $\kappa$ and smooth boundary $\Sigma$.  

Let $\kappa \in \mathbb R$. As usual we denote 
\begin{align*}
    \sn_\kappa(t) & =\begin{cases}
        \frac{\sin(\sqrt{\kappa}t)}{\sqrt{\kappa}}, &   \text{if } \kappa > 0,\\
        t, &   \text{if }  \kappa = 0,\\
        \frac{\sinh(\sqrt{-\kappa}t)}{\sqrt{-\kappa}}, &  \text{if }  \kappa < 0,
    \end{cases} \\ 
    \cn_\kappa(t) & = \sn_\kappa'(t),\\
    \tn_\kappa(t) & = \sn_\kappa(t)/\cn_\kappa(t).
\end{align*}
Let $\mathbb B_\kappa(\rho)$ denote a geodesic ball of radius $\rho$ in the simply connected space form $\mathbb M_\kappa$ with constant curvature $\kappa$. Assume $\rho < \frac{\pi}{\sqrt{\kappa}}$ if $\kappa > 0$. The boundary $\partial\mathbb B_\kappa(\rho)$ is smooth and has scalar curvature $2/\sn_\kappa^2(\rho)$, mean curvature $1/\tn_\kappa(\rho)$ and area $4\pi \sn_{\kappa}^2(\rho)$. These quantities distinguish the model geodesic balls.
\begin{proposition}
    If $\Spec(\Lambda(\mathbb B_{\kappa_1}(\rho_1))) = \Spec(\Lambda(\mathbb B_{\kappa_2}(\rho_2)))$, then $\kappa_1 = \kappa_2$ and $\rho_1 = \rho_2$.
\end{proposition}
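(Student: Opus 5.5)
Since both geodesic balls have the same Steklov spectrum, all heat invariants coincide. I will use only $a_0$, $a_1$, $a_2$ from \eqref{eq: a0}--\eqref{eq: a2}. The boundary of $\mathbb B_\kappa(\rho)$ is a round sphere, so $\chi=2$, and $H_1=1/\tn_\kappa(\rho)$ is constant. Write $A=4\pi\sn_\kappa^2(\rho)$ for the area. Then
\[
a_0=\frac{A}{2\pi},\qquad a_1=\frac{A}{4\pi\tn_\kappa(\rho)},\qquad a_2=\frac{A}{16\pi\tn_\kappa^2(\rho)}+\frac1{12}+\frac{3\kappa A}{16\pi}.
\]
Equality of $a_0$ gives $\sn_{\kappa_1}(\rho_1)=\sn_{\kappa_2}(\rho_2)=:s$. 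Equality of $a_1$ then gives $\cn_{\kappa_1}(\rho_1)/s^2\cdot s=\cn_{\kappa_2}(\rho_2)/s\cdot$ up to the common factor, i.e.\ $\cn_{\kappa_1}(\rho_1)=\cn_{\kappa_2}(\rho_2)=:c$, since $H_1A=4\pi s\,c$.

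Next I use the identity $\cn_\kappa^2+\kappa\,\sn_\kappa^2=1$, valid for every sign of $\kappa$. It gives $\kappa_i=(1-c^2)/s^2$ for $i=1,2$, so $\kappa_1=\kappa_2=:\kappa$. I do not even need $a_2$ here, although it gives a consistency check: the $H_1^2$ and $\kappa$ terms combine as $4\pi(c^2+3\kappa s^2/4)$, and this is then automatically equal for both balls.

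It remains to show $\rho_1=\rho_2$ given that $\kappa$, $\sn_\kappa(\rho)$ and $\cn_\kappa(\rho)$ all agree. For $\kappa\le0$, $\sn_\kappa$ is strictly increasing on $(0,\infty)$, so $\rho$ is determined by $\sn_\kappa(\rho)$ alone. For $\kappa>0$ with $\rho<\pi/\sqrt\kappa$, the pair $(\sin\sqrt\kappa\rho,\cos\sqrt\kappa\rho)$ determines $\sqrt\kappa\rho\in(0,\pi)$ uniquely, so again $\rho_1=\rho_2$.

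The main point to verify is the formulas for the boundary quantities (area $4\pi\sn_\kappa^2$, mean curvature $\cn_\kappa/\sn_\kappa$ with the sign convention $H_1>0$, $\chi=2$) and the constant-curvature forms \eqref{eq: a0}--\eqref{eq: a1}. Once these are in place the argument is elementary. The only subtle case is $\kappa>0$ with $\rho>\pi/(2\sqrt\kappa)$, where $c<0$: there $\sn$ alone does not determine $\rho$, but the sign of $a_1$ (equivalently of $c$) resolves the ambiguity.
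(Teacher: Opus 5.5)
Your proof is correct, but it recovers the curvature by a different route from the paper.

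Both arguments start the same way. Equality of $a_0$ gives $\sn_{\kappa_1}(\rho_1)=\sn_{\kappa_2}(\rho_2)$. Equality of $a_1$ then gives $\cn_{\kappa_1}(\rho_1)=\cn_{\kappa_2}(\rho_2)$, using $\int_\Sigma H_1=4\pi\sn_\kappa(\rho)\cn_\kappa(\rho)$ and $\sn_\kappa(\rho)>0$. Your sentence carrying out this step is garbled, but the conclusion is right.

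At the next step the two proofs diverge:
\begin{itemize}
\item The paper brings in $a_2$. Since $\int_\Sigma H_1^2$, $\chi(\Sigma)$ and the area now agree, the remaining term $\frac{3\kappa}{16\pi}\Area(\Sigma,g)$ must also agree, which forces $\kappa_1=\kappa_2$.
\item You observe that $\kappa$ is already determined by the pair $(s,c)=(\sn_\kappa(\rho),\cn_\kappa(\rho))$ through $\cn_\kappa^2+\kappa\sn_\kappa^2=1$, namely $\kappa=(1-c^2)/s^2$. This makes $a_2$ unnecessary.
\end{itemize}
Your route is slightly more economical, since it uses only the area and the total mean curvature. The paper's route is equally short but spends one more heat invariant.

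You also make the final step $\rho_1=\rho_2$ explicit. For $\kappa\le0$ you use the monotonicity of $\sn_\kappa$. For $\kappa>0$ you note that the angle $\sqrt\kappa\rho\in(0,\pi)$ is fixed by its sine and cosine. The paper states this step without comment.
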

\begin{proof}
In view of \eqref{eq: a0} and \eqref{eq: a1}, we have
\[\sn_{\kappa_1}(\rho_1) = \sn_{\kappa_2}(\rho_2),\quad \cn_{\kappa_1}(\rho_1) = \cn_{\kappa_2}(\rho_2).\]
Hence 
\[\int_{\partial\mathbb B_{\kappa_1}(\rho_1)} H_1^2 =  \int_{\partial\mathbb B_{\kappa_2}(\rho_2)} H_1^2.\]
Combining this with the spectral invariant \eqref{eq: a2}, we conclude that $\kappa_1 = \kappa_2$. Therefore, $\sn_{\kappa_1}(\rho_1) = \sn_{\kappa_2}(\rho_2)$,  $\cn_{\kappa_1}(\rho_1) = \cn_{\kappa_2}(\rho_2)$ and hence, $\rho_1 = \rho_2$.
\end{proof}

From now on, fix $\kappa\in\mathbb R$. Here and below, we suppress the area density in boundary integrals when it is clear from the context. Linear combinations of \eqref{eq: a0}--\eqref{eq: a2} and \eqref{eq: b1}--\eqref{eq: b2} show that the following quantities are spectral invariants: 
\begin{align*}
    & \tilde a_0(\Omega,g) := \Area(\Sigma,g),\\
    & \tilde a_1(\Omega,g) := \int_\Sigma H_1,\\
    & \tilde a_2(\Omega,g) := \int_\Sigma H_1^2 + \frac{2\pi}{3}\chi(\Sigma),\\
    & \tilde b_1(\Omega,g) := \int_\Sigma H_1(H_1^2-H_2),\\
    & \tilde b_2(\Omega,g) := \int_\Sigma\left((2H_1^2-H_2)(H_1^2-H_2) + |\nabla^\top H_1|^2\right) - \frac{20\kappa\pi}{3}\chi(\Sigma).
\end{align*}

Recall that an isometrically immersed surface $i:(\Sigma,g_\Sigma)\to(M^3,g_M)$ is called {\em totally umbilical} if its second fundamental form is proportional to $g_\Sigma$ everywhere, or equivalently, the principal curvatures are equal at every point. A classical result in differential geometry states that every immersed connected totally umbilical surface in a constant curvature space is either totally geodesic or has nonzero constant mean curvature \cite[Chapter 7.D]{spivak1970comprehensive}. In the latter case, by the Gauss equation, it also has constant scalar curvature. In particular, if $M$ is a simply connected space form and $\Sigma$ is closed, then $\Sigma$ must be a geodesic sphere.

Note that for any immersed surface $\Sigma$ in $M^3$, we have
\begin{equation}\label{eq: H1^2 ge H2}
    H_1^2 = \left(\frac{\lambda_1 + \lambda_2}{2}\right)^2\ge \lambda_1\lambda_2 = H_2.
\end{equation} 
Thus $\Sigma$ is totally umbilical if and only if 
 \[H_1^2 - H_2 = 0.\]

For simplicity, set
\[X_1 = \tilde a_2 - \frac{2\pi}{3}\chi(\Sigma) - \frac{\tilde a_1^2}{\tilde a_0},\quad X_2 = \tilde b_1 - \frac{\tilde a_1}{\tilde a_0}\left(\tilde a_2 + \kappa\tilde a_0 - \frac{8\pi}{3}\chi(\Sigma)\right),\]
and
\[X_3 = \tilde b_2 + \frac{20\kappa\pi}{3}\chi(\Sigma) - \frac{1}{\tilde a_0}\left(\tilde a_2 + \kappa\tilde a_0 - \frac{8\pi}{3}\chi(\Sigma)\right)^2.\]
\begin{lemma}
    \label{lem: spectrum controls chi} 
    The matrix 
    \begin{equation}
        \label{eq: constraint for chi}
        \begin{pmatrix}
            X_1 & X_2\\
            X_2 & X_3
        \end{pmatrix}
    \end{equation}
    is positive semidefinite. Moreover, it vanishes if and only if  $\Sigma$ is one of the following:
    \begin{enumerate}[(1)]
        \item a disjoint union of totally umbilical surfaces  with principal curvature $\frac{\tilde a_1}{\tilde a_0}$;
        \item a disjoint union of  minimal surfaces with $H_2 = \frac{2\pi\chi(\Sigma)}{\tilde a_0} - \kappa$.
    \end{enumerate}
\end{lemma}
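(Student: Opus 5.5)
The plan is to rewrite $X_1,X_2,X_3$ as integrals of centered quantities on $\Sigma$, and then get positive semidefiniteness from the Cauchy--Schwarz inequality. Put $A=\tilde a_0=\Area(\Sigma,g)$ and $f:=H_1^2-H_2$, so $f\ge0$ by \eqref{eq: H1^2 ge H2}. Let $\overline{H}_1=\tilde a_1/A$ and $\bar f=A^{-1}\int_\Sigma f$ be the means, and set $u=H_1-\overline H_1$ and $v=f-\bar f$.

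\textbf{Step 1: rewrite the entries.} The Gauss equation and Gauss--Bonnet, applied componentwise, give $\int_\Sigma H_2=2\pi\chi(\Sigma)-\kappa A$ (as already used in the proof of Corollary~\ref{cor: b_1 b_2 constant curvature}). Hence
\[
\tilde a_2+\kappa\tilde a_0-\tfrac{8\pi}{3}\chi(\Sigma)=\int_\Sigma H_1^2-2\pi\chi(\Sigma)+\kappa A=\int_\Sigma f .
\]
Directly from the definitions, and using $\int_\Sigma u=0$, we then get
\[
X_1=\int_\Sigma u^2,\qquad X_2=\int_\Sigma H_1 f-\overline H_1\int_\Sigma f=\int_\Sigma uf=\int_\Sigma uv .
\]
For $X_3$, the identity $2H_1^2-H_2=H_1^2+f$ gives
\[
X_3=\int_\Sigma H_1^2 f+\int_\Sigma f^2-\frac{(\int_\Sigma f)^2}{A}+\int_\Sigma|\nabla^\top H_1|^2
=\int_\Sigma H_1^2f+\int_\Sigma v^2+\int_\Sigma|\nabla^\top H_1|^2 .
\]

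\textbf{Step 2: semidefiniteness.} Since $f\ge0$, every term in $X_3$ is nonnegative, so $X_1\ge0$ and $X_3\ge \int_\Sigma v^2\ge0$. By Cauchy--Schwarz,
\[
X_2^2=\Bigl(\int_\Sigma uv\Bigr)^2\le X_1\int_\Sigma v^2\le X_1X_3 .
\]
Nonnegative diagonal entries together with a nonnegative determinant give positive semidefiniteness. This step is routine once Step 1 is in place. The only real obstacle is finding the right normalization: one must subtract means and recognize that the combination $\tilde a_2+\kappa\tilde a_0-\frac{8\pi}{3}\chi$ is exactly $\int_\Sigma f$. If one instead tries to complete squares pointwise, a term $fH_2$ of indefinite sign appears.

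\textbf{Step 3: the equality case.} The matrix vanishes exactly when $X_1=X_2=X_3=0$. Since $X_2=0$ follows from $X_1=0$, this is equivalent to $X_1=X_3=0$, i.e.
\[
u\equiv0,\qquad v\equiv0,\qquad \nabla^\top H_1\equiv0,\qquad H_1^2f\equiv0 .
\]
So $H_1\equiv\tilde a_1/\tilde a_0$ is the same constant on every component, and $f$ is a global constant. There are two cases.
\begin{enumerate}[(1)]
\item If $\tilde a_1\ne0$, then $H_1\neq0$ everywhere, so $f\equiv0$. Every component is totally umbilical with principal curvature $\tilde a_1/\tilde a_0$.
\item If $\tilde a_1=0$, then $H_1\equiv0$, so every component is minimal and $H_2=-f$ is constant. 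By Step 1 this constant equals $-\bar f=A^{-1}\bigl(2\pi\chi(\Sigma)-\kappa A\bigr)=\frac{2\pi\chi(\Sigma)}{\tilde a_0}-\kappa$.
\end{enumerate}
Conversely, in each listed case one checks directly from the expressions in Step 1 that $u$, $v$, $\nabla^\top H_1$ and $H_1^2f$ all vanish, so the matrix is zero.
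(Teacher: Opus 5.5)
Your proof is correct and is essentially the paper's argument. The paper applies Cauchy--Schwarz to $H_1^2-H_2+tH_1$ and drops the nonnegative terms $\int_\Sigma H_1^2(H_1^2-H_2)+\int_\Sigma|\nabla^\top H_1|^2$ to obtain $X_1t^2+2X_2t+X_3\ge 0$ for all $t$. That is your exact identity $X_1t^2+2X_2t+X_3=\int_\Sigma(v+tu)^2+\int_\Sigma H_1^2f+\int_\Sigma|\nabla^\top H_1|^2$ read as an inequality. Your exact identities let you read off the equality case directly, rather than by tracking equality in each inequality for every $t$ as the paper does.
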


\begin{proof}
    The key ingredient is the following consequence of the Cauchy--Schwarz inequality: for any smooth function $f$, we have
    \[\int_\Sigma(H_1^2-H_2+f)^2\ge \frac{1}{\Area(\Sigma,g)}\left(\int_\Sigma (H_1^2-H_2 + f) \right)^2.\]
    In particular, setting $f = tH_1$ for $t\in\mathbb R$, the left-hand side expands to
    \[
        \int_\Sigma(H_1^2-H_2)^2 + 2t\int_\Sigma H_1(H_1^2-H_2)  + t^2\int_\Sigma H_1^2
    \]
    which, by the definitions of $\tilde a_2, \tilde b_1$ and $\tilde b_2$, is bounded from above by
    \[
        \tilde b_2 + \frac{20\kappa\pi}{3}\chi(\Sigma) + 2t\tilde b_1 + t^2\left(\tilde a_2-\frac{2\pi}{3}\chi(\Sigma)\right).
    \]
    Meanwhile, the right-hand side  can be written as 
    \begin{align*}
        & \frac{1}{\Area(\Sigma,g)}\left\{\left(\int_\Sigma H_1^2-H_2 + 2tH_1\right)\int_\Sigma\left(H_1^2-H_2\right) + t^2\left(\int_\Sigma H_1\right)^2\right\}\\
        = & \frac{1}{\tilde a_0}\left\{\left(\tilde a_2 + \kappa\tilde a_0 - \frac{8\pi}{3}\chi(\Sigma) + 2t\tilde a_1\right)\left(\tilde a_2 + \kappa\tilde a_0 - \frac{8\pi}{3}\chi(\Sigma)\right) + t^2\tilde a_1^2\right\},
    \end{align*}
    where we used the Gauss equation $K^\Sigma - \kappa = H_2$ and the Gauss--Bonnet theorem. Therefore, we have
    \[X_1t^2 + 2X_2t + X_3\ge 0\]
    for all $t\in\mathbb R$. The validity of this inequality for all real $t$ is precisely equivalent to the positive semidefiniteness of the matrix \eqref{eq: constraint for chi}.
    
    Next, we analyze the case where the matrix \eqref{eq: constraint for chi} vanishes, which occurs if and only if $X_1t^2+2X_2t+X_3=0$ for all $t\in\mathbb R$, meaning that all the aforementioned inequalities must hold with equality. That is,
    \[\begin{cases}
    	\int_\Sigma(H_1^2-H_2+tH_1)^2 \, d\mu = \frac{1}{\Area(\Sigma,g)}\left(\int_\Sigma (H_1^2-H_2 + tH_1) \, d\mu \right)^2, & \forall t\in\mathbb R,\\
    	\int_\Sigma |\nabla^\top H_1|^2 \, d\mu = 0,\\
    	\int_{\Sigma}H_1^2(H_1^2 - H_2) \, d\mu = 0.
    \end{cases}\] 
    
    By the equality case of the Cauchy--Schwarz inequality, the first relation implies that $H_1^2 - H_2 + tH_1$ must be a constant function for every $t \in \mathbb{R}$. As a result, both $H_1$ and $H_1^2 - H_2$ are constant on $\Sigma$. This is consistent with the second equality, which independently requires $H_1$ to be locally constant. Finally, the third equality implies that on each connected component of $\Sigma$, either $H_1 \equiv 0$ or $H_1^2 - H_2 \equiv 0$. 
    Combined with the fact that both $H_1$ and $H_1^2 - H_2$ are  constant on the whole of $\Sigma$, we see that either $H_1 \equiv 0$ on $\Sigma$, or $H_1^2 - H_2 \equiv 0$ on $\Sigma$. 
     
   If $H_1\equiv0$ on $\Sigma$, then the Gauss equation and Gauss--Bonnet theorem imply
   \[
   H_2=\frac{1}{\tilde a_0}\int_\Sigma H_2\,d\mu
   =\frac{2\pi\chi(\Sigma)}{\tilde a_0}-\kappa,
   \]
   which is the second alternative. If instead $H_1^2-H_2\equiv0$, then $\Sigma$ is totally umbilical by \eqref{eq: H1^2 ge H2}; moreover, the constancy of $H_1$ and the definition of $\tilde a_1$ give
   \[
   H_1=\frac{1}{\tilde a_0}\int_\Sigma H_1\,d\mu=\frac{\tilde a_1}{\tilde a_0}.
   \]
   Hence every connected component has both principal curvatures equal to $\tilde a_1/\tilde a_0$, which is the first alternative. 
   
   Conversely, in either case the function $H_1^2 - H_2 + tH_1$ is a constant depending on $t$, and $|\nabla^\top H_1|^2 = H_1^2(H_1^2 - H_2) = 0$. Therefore, the preceding inequalities are equalities, and hence $X_1 = X_2 = X_3=0$.
\end{proof}
      
\begin{lemma}
    \label{lem: spectrum controls chi sphere}
    If $(\Omega,g)$ is Steklov isospectral to $\mathbb B_\kappa(\rho)$, then $\chi(\Sigma) = 2$. Moreover, $\Sigma$ is totally umbilical with principal curvatures equal to $1/\tn_\kappa(\rho)$, and is isometric to either $\partial\mathbb B_\kappa(\rho)$ or the disjoint union of two copies of $\partial\mathbb B_\kappa(\rho)/\pm$. 
\end{lemma}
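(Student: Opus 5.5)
The plan is to feed the spectral data of the model ball into Lemma \ref{lem: spectrum controls chi}. Since $\tilde a_0,\tilde a_1,\tilde a_2,\tilde b_1,\tilde b_2$ are spectral invariants, they coincide for $(\Omega,g)$ and for $\mathbb B_\kappa(\rho)$. The only non-audible ingredient in $X_1,X_2,X_3$ is $\chi(\Sigma)$, which is unknown \emph{a priori}. Write $A=4\pi\sn_\kappa^2(\rho)$ and $c=1/\tn_\kappa(\rho)$. For the ball, $\tilde a_0=A$, $\tilde a_1=cA$, $\tilde a_2=c^2A+\tfrac{4\pi}{3}$, $\tilde b_1=0$ and $\tilde b_2=-\tfrac{40\kappa\pi}{3}$; these use $H_1^2=H_2$ and $H_1$ constant on the sphere.

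\textbf{Step 1: express the $X_i$ in terms of $\chi$.} Substituting these values, with $\chi=\chi(\Sigma)$ as the unknown, gives
\[
X_1=\tfrac{2\pi}{3}(2-\chi),\qquad \tilde a_2+\kappa\tilde a_0-\tfrac{8\pi}{3}\chi=(c^2+\kappa)A+\tfrac{4\pi}{3}-\tfrac{8\pi}{3}\chi .
\]
Now use $(c^2+\kappa)A=4\pi(\cn_\kappa^2+\kappa\sn_\kappa^2)=4\pi$. This yields
\[
X_2=-c\cdot\tfrac{16\pi}{3}\cdot\tfrac{2-\chi}{2}\cdot(\ldots),
\]
more precisely $X_2=-\tfrac{8\pi c}{3}(2-\chi)$. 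Similarly,
\[
X_3=-\tfrac{40\kappa\pi}{3}+\tfrac{20\kappa\pi}{3}\chi-\tfrac{1}{A}\bigl(\tfrac{16\pi}{3}-\tfrac{8\pi}{3}\chi\bigr)^2=-\tfrac{20\kappa\pi}{3}(2-\chi)-\tfrac{64\pi^2}{9A}(2-\chi)^2 .
\]
Every entry therefore carries a factor of $(2-\chi)$.

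\textbf{Step 2: force $\chi=2$.} Positive semidefiniteness first gives $X_1\ge0$, hence $\chi\le2$. Suppose $\chi<2$ and set $\delta=2-\chi>0$. Then $X_1X_3-X_2^2\ge0$ reads
\[
\tfrac{2\pi}{3}\delta\Bigl(-\tfrac{20\kappa\pi}{3}\delta-\tfrac{64\pi^2}{9A}\delta^2\Bigr)\ge\tfrac{64\pi^2c^2}{9}\delta^2 .
\]
Dividing by $\tfrac{4\pi^2}{9}\delta^2$ gives $-10\kappa-\tfrac{32\pi}{3A}\delta\ge16c^2$. Since $16c^2+10\kappa=16(c^2+\kappa)-6\kappa=\tfrac{64\pi}{A}-6\kappa$, we must have $6\kappa\ge\tfrac{64\pi}{A}+\tfrac{32\pi\delta}{3A}>0$. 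This is impossible when $\kappa\le0$. When $\kappa>0$ it requires $\sn_\kappa^2(\rho)>8/(3\kappa)$, contradicting $\sn_\kappa^2\le1/\kappa$. Hence $\chi(\Sigma)=2$.

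I expect this step, keeping the arithmetic honest, to be the main obstacle. The key is that $X_3$ has a $\delta^2$ term that is negative definite, which kills every case.

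\textbf{Step 3: identify $\Sigma$.} With $\chi=2$ we get $X_1=X_2=X_3=0$, so the matrix vanishes and Lemma \ref{lem: spectrum controls chi} applies. Alternative (2) requires $H_1\equiv0$, hence $\tilde a_1=0$. This means $c=0$, i.e. $\cn_\kappa(\rho)=0$. That is excluded by $2\sqrt\kappa\rho\le\pi$ unless equality holds, in which case the two alternatives coincide in umbilicity with $c=0$. So $\Sigma$ is totally umbilical with principal curvature $c$.

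By the Gauss equation each component has constant curvature $c^2+\kappa=1/\sn_\kappa^2(\rho)>0$, so each component is a quotient of the round sphere of radius $\sn_\kappa(\rho)$, i.e. $S^2$ or $\mathbb{RP}^2$. Gauss--Bonnet then gives total area $A\chi/2=A$, matching $\tilde a_0$ automatically. The constraint $\chi(\Sigma)=2$ with each component contributing $2$ or $1$ leaves exactly two options: a single $S^2$, or two copies of $\mathbb{RP}^2$. These are isometric to $\partial\mathbb B_\kappa(\rho)$ and to two copies of $\partial\mathbb B_\kappa(\rho)/\pm$ respectively.
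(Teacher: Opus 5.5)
Your proposal is correct and follows the paper's route: you substitute the ball's values of $\tilde a_0,\tilde a_1,\tilde a_2,\tilde b_1,\tilde b_2$ into the positive semidefinite matrix of Lemma~\ref{lem: spectrum controls chi}, use $X_1\ge0$ and $X_1X_3-X_2^2\ge0$ to force $\chi(\Sigma)=2$, and then apply the vanishing case together with the classification of closed constant-curvature surfaces. Two minor remarks: in Step~2, the case split on $\kappa$ can be avoided by writing $16\cn_\kappa^2(\rho)+10\kappa\sn_\kappa^2(\rho)=10+6\cn_\kappa^2(\rho)>0$, which is the paper's bracket $15+9\cn_\kappa^2(\rho)+4(2-\chi)$; and in Step~3, $2\sqrt\kappa\rho\le\pi$ is not a hypothesis of this lemma, but you do not need it, since alternative (2) forces $c=0$, and then $H_2=4\pi/\tilde a_0-\kappa=c^2=0$ makes $\Sigma$ totally geodesic, hence umbilical with principal curvature $c$ (the paper instead argues $H_1^2-H_2=-c^2\le0$).
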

\begin{proof}
    By assumption, we have 
    \begin{equation}
    \label{eq: H2 is nonnegative}
        \tilde a_0 = 4\pi \sn_\kappa^2(\rho),\quad \tilde a_1 = 4\pi \sn_\kappa(\rho)\cn_\kappa(\rho),\quad \tilde a_2 = 4\pi \cn_\kappa^2(\rho) + \frac{4\pi}{3},
    \end{equation}
    and
    \[\tilde b_1 = 0,\quad \tilde b_2 = -\frac{40\kappa\pi}{3}.\]
    Substituting these formulas into Lemma \ref{lem: spectrum controls chi} and using $\cn_\kappa^2 + \kappa \sn_\kappa^2 = 1$, we obtain the following constraint on $\chi(\Sigma)$:
    \begin{equation}
        \label{eq: constraint on chi sphere}
        \begin{pmatrix}
            \frac{2\pi}{3}(2-\chi(\Sigma)) & -\frac{8\pi}{3\tn_\kappa(\rho)}\left(2-\chi(\Sigma)\right)\\
            -\frac{8\pi}{3\tn_\kappa(\rho)}\left(2-\chi(\Sigma)\right) & -\frac{4\pi}{9\sn_\kappa^2(\rho)}(2-\chi(\Sigma))(15\kappa \sn_\kappa^2(\rho) + 4(2-\chi(\Sigma)))
        \end{pmatrix}
        \ge 0. 
    \end{equation}
    Or, equivalently, by the standard criterion for positive semidefiniteness, $\chi(\Sigma)$ satisfies the following inequalities:
    \[\frac{2\pi}{3}(2-\chi(\Sigma))\ge 0\quad\text{and}\quad -\frac{8\pi^2}{27\sn_\kappa^2(\rho)}(2-\chi(\Sigma))^2(4(2-\chi(\Sigma)) + 15 + 9\cn_\kappa^2(\rho))\ge0.\]
    Therefore, $\chi(\Sigma) = 2$. 
    The matrix in \eqref{eq: constraint on chi sphere} is then zero. If $\Sigma$ is not totally umbilical, by Lemma \ref{lem: spectrum controls chi}, $\Sigma$ is minimal and has constant $H_2 = 1/\tn_\kappa^2(\rho)\ge 0$. It follows that $H_1^2 - H_2\le 0$ and hence $H_1^2 - H_2$ is identically zero, which contradicts the assumption that $\Sigma$ is not totally umbilical. Therefore, $\Sigma$ is totally umbilical with principal curvatures equal to $1/\tn_\kappa(\rho)$ and scalar curvature $2/\sn^2_\kappa(\rho)$ by the Gauss equation.  
    By the Killing--Hopf theorem, $\Sigma$ is isometric to either $\partial\mathbb B_\kappa(\rho)$ or the disjoint union of two copies of $\partial\mathbb B_\kappa(\rho)/\pm$.
\end{proof}

\subsection{Euclidean sphere shells}
When $\Omega\subset\mathbb R^3$ is a smooth domain, the spectral invariants $\tilde a_2$ and $\tilde b_2$ yield the following rigidity statement:
\begin{proposition}
    \label{prop: isospectral to totally umbilical}
    Let $\Omega^*\subset\mathbb R^3$ be a smooth domain whose boundary components are round spheres. If $\Omega\subset\mathbb R^3$ is Steklov isospectral to $\Omega^*$, then the boundary components of $\Omega$ are round spheres and $\chi(\partial\Omega) = \chi(\partial\Omega^*)$. 
\end{proposition}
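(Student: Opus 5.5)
The plan is to use only the two spectral invariants $\tilde a_2$ and $\tilde b_2$, specialized to $\kappa=0$. The key observation is that the integrand of $\tilde b_2$ is pointwise nonnegative and vanishes exactly on totally umbilical surfaces with locally constant mean curvature.

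\emph{Step 1: evaluate $\tilde b_2$ on $\Omega^*$.} With $\kappa=0$,
\[
\tilde b_2(\Omega,g)=\int_\Sigma\bigl((2H_1^2-H_2)(H_1^2-H_2)+|\nabla^\top H_1|^2\bigr).
\]
Every boundary component of $\Omega^*$ is a round sphere. Hence $H_1^2=H_2$ on $\partial\Omega^*$, and $H_1$ is constant on each component, so $\nabla^\top H_1=0$. This gives $\tilde b_2(\Omega^*)=0$. The sign convention for $H_1$, which is negative on an inner sphere, plays no role, since only $H_1^2$, $H_2$ and $\nabla^\top H_1$ enter.

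\emph{Step 2: show that $\tilde b_2(\Omega)=0$ forces umbilicity.} By \eqref{eq: H1^2 ge H2} we have $H_1^2-H_2\ge 0$ on $\partial\Omega$. Also $2H_1^2-H_2=H_1^2+(H_1^2-H_2)\ge 0$. So the integrand of $\tilde b_2(\Omega)$ is a sum of nonnegative terms. Isospectrality gives $\tilde b_2(\Omega)=\tilde b_2(\Omega^*)=0$, so each term vanishes identically. In particular $|\nabla^\top H_1|^2\equiv 0$ and $(2H_1^2-H_2)(H_1^2-H_2)\equiv0$. Wherever $H_1^2-H_2>0$ we would also have $2H_1^2-H_2>0$, which contradicts the vanishing of the product. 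Therefore $H_1^2\equiv H_2$, and $\partial\Omega$ is totally umbilical.

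\emph{Step 3: identify the components.} Each component of $\partial\Omega$ is a closed embedded surface in $\mathbb R^3$. By the classical result quoted before Lemma \ref{lem: spectrum controls chi}, each component is either totally geodesic or has nonzero constant mean curvature. A totally geodesic component would lie in a plane, which is impossible for a closed surface. So each component is a round sphere, and $\chi(\partial\Omega)=2N$, where $N$ is the number of components.

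\emph{Step 4: match the Euler characteristics using $\tilde a_2$.} A round sphere of radius $r$ has $\int H_1^2=4\pi$, independently of $r$. For a boundary made of $N$ round spheres this gives
\[
\tilde a_2=4\pi N+\tfrac{2\pi}{3}\cdot 2N=\tfrac{16\pi}{3}N.
\]
This formula applies to both $\Omega$ and $\Omega^*$. Equality of $\tilde a_2$ then gives $N=N^*$, and hence $\chi(\partial\Omega)=\chi(\partial\Omega^*)$.

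I expect no serious obstacle. The only delicate point is Step 2: one must note that $2H_1^2-H_2$ is nonnegative and is strictly positive wherever $H_1^2-H_2>0$, so that the product term cannot vanish through cancellation.
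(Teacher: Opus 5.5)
Your proof is correct and follows essentially the same route as the paper: $\tilde b_2(\Omega^*)=0$ together with the pointwise nonnegativity of the $\tilde b_2$ integrand (which you make explicit and the paper leaves implicit) forces $\partial\Omega$ to be totally umbilical, its components are then round spheres, and equality of $\tilde a_2$ matches the Euler characteristics. The only cosmetic difference is in that last step: the paper evaluates $\int_{\partial\Omega}H_1^2=\int_{\partial\Omega}H_2=2\pi\chi(\partial\Omega)$ via Gauss--Bonnet, while you count sphere components directly, which is equivalent.
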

\begin{proof}
    By isospectrality, $\tilde b_2(\Omega)$ vanishes since $H_1^2-H_2$ and $\nabla^\top H_1$ vanish on Euclidean spheres. It follows that $\partial\Omega$ is also totally umbilical, which means that each component of  $\partial\Omega$ must be a round sphere by \cite[Theorem 19]{spivak1970comprehensive}. As a result, we have 
    \[\int_{\partial\Omega}H_1^2 = \int_{\partial\Omega}H_2 = 2\pi\chi(\partial\Omega)\]
    by the Gauss--Bonnet theorem. Therefore, it follows from $\tilde a_2(\Omega) = \tilde a_2(\Omega^*)$ that $\chi(\partial\Omega) = \chi(\partial\Omega^*)$.
\end{proof}

In particular, taking $\Omega^*=A_{r,R}$ reduces Theorem~\ref{thm:concentric_spherical_shell_inverse} to determining the radii and relative positions of the two spherical boundary components.  
\begin{proof}[Proof of Theorem \ref{thm:concentric_spherical_shell_inverse}]
    First, by Proposition \ref{prop: isospectral to totally umbilical}, $\Omega =  B(p^+,\rho_+)\setminus \overline{B(p^-,\rho_-)}$, where $B(p^\pm,\rho_\pm)$ denotes the open ball of radius $\rho_\pm$ centered at $p^\pm\in\mathbb R^3$. The outer boundary sphere has
    \[H_1=\frac{1}{\rho_+},\]
    whereas on the inner boundary sphere the outward unit normal of $\Omega$ points into the hole, so
    \[H_1=-\frac{1}{\rho_-}.\]
    Hence
    \[\tilde a_0(\Omega) = 4\pi(\rho_+^2+\rho_-^2), \quad \tilde a_1(\Omega) = 4\pi(\rho_+-\rho_-).\]
    For the concentric shell $A_{r,R}$,
\[\widetilde a_0(A_{r,R}) = 4\pi(R^2+r^2), \quad
\widetilde a_1(A_{r,R}) = 4\pi(R-r).\]
Thus isospectrality gives $R = \rho^+$ and $r = \rho^-$.

Finally, using a result of I. Ftouhi \cite[Theorem 1.1]{ftouhiWherePlaceSpherical2025}, the first nonzero Steklov eigenvalue of $\Omega$ achieves its maximum if and only if $\Omega$ is isometric to $A_{r,R}$. This completes the proof.
\end{proof}

\section{Proof of Theorem \ref{thm: determine 3d geodesic balls}}
\label{sec:ProofThm1}

In this section we apply Corollary \ref{cor: b_1 b_2 constant curvature} to prove Theorem \ref{thm: determine 3d geodesic balls}. The proof can be divided into two steps. First, we will show that the spectral invariants force the boundary to have the same Euler characteristic and the same principal curvatures as the boundary of the geodesic ball. As a consequence, we will prove
\begin{proposition}\label{prop: spectrum determines boundary}
	Let $(\Omega,g)$ be a compact $3$-dimensional Riemannian manifold with constant curvature $\kappa$ and a smooth boundary $\Sigma$. 
    If $\Spec(\Lambda(\Omega,g)) = \Spec(\Lambda(\mathbb B_\kappa(\rho)))$ for some $\rho > 0$,  then $\Sigma$ is totally umbilical with principal curvatures $1/\tn_\kappa(\rho)$, and is isometric to the geodesic sphere $\partial\mathbb B_\kappa(\rho)$.
\end{proposition}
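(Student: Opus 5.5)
By Lemma~\ref{lem: spectrum controls chi sphere}, $\chi(\Sigma)=2$ and $\Sigma$ is totally umbilical with principal curvatures $1/\tn_\kappa(\rho)$. Moreover, $\Sigma$ is isometric either to $\partial\mathbb B_\kappa(\rho)$ or to the disjoint union of two copies of $\partial\mathbb B_\kappa(\rho)/\pm$, i.e.\ two real projective planes each of area $2\pi\sn_\kappa^2(\rho)$. The proposition therefore reduces to ruling out the second alternative. Note that neither $\chi$ nor the local invariants $\tilde a_0,\tilde a_1,\tilde a_2,\tilde b_1,\tilde b_2$ can see the difference: both alternatives have the same area, the same pointwise umbilic data and total Euler characteristic $2$. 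The extra input must be topological or global.

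\textbf{Step 1 (topological obstruction, first attempt).} A closed surface embedded as a boundary component of the compact $3$-manifold $\Omega$ is two-sided, since it has the outward normal $\nu$. So $\Sigma$ is nonorientable only if $\Omega$ is nonorientable. I would exploit the umbilic structure to get a contradiction. Along a totally umbilical boundary component with constant principal curvature $c=1/\tn_\kappa(\rho)$ in a space of constant curvature $\kappa$, the normal exponential map from $\Sigma$ is a local isometry onto a neighbourhood. That neighbourhood is isometric to a collar of $\partial\mathbb B_\kappa(\rho)/\pm$ inside $\mathbb B_\kappa(\rho)/\pm$, with warping $\sn_\kappa(\rho-t)/\sn_\kappa(\rho)$. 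By analytic continuation of this local isometry (the developing map of a constant-curvature metric), we obtain a map from the universal cover of $\Omega$ into $\mathbb M_\kappa$. I will then argue that each $\mathbb{RP}^2$ boundary component would force the inward normal geodesics to focus at distance $\rho$, at the centre of the model ball. The quotient by the antipodal map then has a cone singularity there, which contradicts smoothness of $(\Omega,g)$, \emph{provided} those normal geodesics remain inside $\Omega$ up to distance $\rho$.

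\textbf{Step 2 (fallback: an orientable-cover or connectedness argument).} If the geodesics can exit through the other component before focusing, I would instead use the second component directly. In that case $\Omega$ contains two disjoint boundary collars, each modelled on $\mathbb B_\kappa(\rho)/\pm$ near its boundary, and these collars curve "inward". A comparison argument would then bound the interior: for $\kappa\le0$ or under $2\sqrt\kappa\rho\le\pi$, the normal geodesics from $\Sigma$ remain minimizing up to distance $\rho$, and Heintze--Karcher type volume comparison gives $\Vol(\Omega)\le$ (contribution of each collar) $=2\cdot\tfrac12\Vol(\mathbb B_\kappa(\rho))$. Equality would force the cut locus to be at distance exactly $\rho$, which again produces the singular point. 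The hypothesis $2\sqrt\kappa\rho\le\pi$ (mean curvature $\ge0$) is precisely what makes the Heintze--Karcher estimate applicable, and this matches where that assumption enters Theorem~\ref{thm: determine 3d geodesic balls}.

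\textbf{Main obstacle.} The hard step is this global exclusion of the two-$\mathbb{RP}^2$ configuration. Spectrally it appears invisible at the level of the residues computed so far, so it must come from Riemannian geometry of $\Omega$ rather than from further heat invariants. I would first try the developing-map and focusing argument of Step~1, with Heintze--Karcher comparison as the backup. Once the case is excluded, Lemma~\ref{lem: spectrum controls chi sphere} immediately gives that $\Sigma$ is isometric to $\partial\mathbb B_\kappa(\rho)$ with umbilic principal curvatures $1/\tn_\kappa(\rho)$, which is the statement.
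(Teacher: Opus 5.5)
There is a genuine gap, and it lies in the case your plan sets aside. The proposition has no radius hypothesis: the condition $2\sqrt\kappa\rho\le\pi$ belongs to Theorem~\ref{thm: determine 3d geodesic balls}, not to this statement. So you must also treat $\kappa>0$ with $\pi/(2\sqrt\kappa)<\rho<\pi/\sqrt\kappa$.

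In that range the two-$\mathbb{RP}^2$ alternative cannot be excluded by Riemannian geometry, because it actually occurs. Take the shell $\{\pi/\sqrt\kappa-\rho\le t\le\rho\}$ around a point of the round $3$-sphere of curvature $\kappa$, and quotient by the free isometry $(x,t)\mapsto(-x,t)$. This gives the smooth manifold $\mathbb{RP}^2\times[\pi/\sqrt\kappa-\rho,\rho]$ with metric $\sn_\kappa^2(t)g_{\mathbb{RP}^2}+dt^2$ and constant curvature $\kappa$. Its boundary is two copies of $\partial\mathbb B_\kappa(\rho)/\pm$, totally umbilical with principal curvatures $1/\tn_\kappa(\rho)$, which are negative here just as for the ball. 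Its boundary collars are locally isometric to that of $\mathbb B_\kappa(\rho)$, so $a_0,a_1,a_2$ and every $b_k$ coincide with those of the ball.

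In this example the inward normal geodesics from one component reach the other after length $2\rho-\pi/\sqrt\kappa<\rho$. So the proviso of your Step~1 fails exactly here. Any argument that uses only the constant curvature and the boundary data, such as the volume comparison of Step~2, applies verbatim to this example and cannot succeed. Step~2 is also incomplete even where you invoke it: a Heintze--Karcher estimate is only an upper bound on $\Vol(\Omega)$, and none of the available invariants supplies the reverse inequality needed to force an equality case.

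The missing ingredient is non-local spectral information. For every $\kappa>0$, the paper first pins down $\Omega$ (Lemma~\ref{lem: spectrum implies orientability}):
\begin{enumerate}[(1)]
\item pass to the orientation double cover $\Omega'$ and embed it in a compact space form $M$ (Lemma~\ref{lem: embed into space form});
\item extend the deck involution to an orientation-reversing isometric involution of $M$ with exactly two fixed points;
\item lift to $S^3$ and count $\pm1$-eigenspaces in $\Ogroup{4}$ to get $M\cong S^3$, so $\Omega$ is exactly the warped product above.
\end{enumerate}
It then excludes this warped product by eigenvalue multiplicities (Lemma~\ref{lem: nonisopectral}). By $\Ogroup{3}$-symmetry and a Wronskian monotonicity argument, the first positive Steklov eigenvalue of any ball has multiplicity exactly $3$. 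For the warped product, splitting $L^2$ of the two $\mathbb{RP}^2$'s into the $\mathrm{SO}(3)\times\mathbb Z_2$-invariant spaces $\mathcal V^\pm_{2\ell}$ forces multiplicity $1$, $5$ or $6$.

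For $\kappa\le 0$, your developing-map idea can be made rigorous the way the paper does it. Apply the proof of Proposition~\ref{prop:iso-to-geod-ball} to $\Omega'$: embed it in a complete space form, lift to $\mathbb M_\kappa$, and use Lemma~\ref{lem: disjoint balls} to see the lifted balls are disjoint. This shows $\Omega'$ is a geodesic ball, contradicting that $\partial\Omega'$ has two components. The same route also covers $\kappa>0$ with $2\sqrt\kappa\rho\le\pi$, but it cannot go further, by the example above.
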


Second, by carefully analyzing the interior topology, we can prove the following Riemannian geometry result (in any dimension) which may be of independent interest:

\begin{proposition}\label{prop:iso-to-geod-ball}
	Let $(\Omega,g)$ be a compact connected  Riemannian manifold with constant sectional curvature $\kappa$ and a smooth boundary $\Sigma$. 	In the case $\kappa > 0$ we further assume $2\sqrt{\kappa}\rho \le \pi$.  If $\Sigma$ is totally umbilical and is 
	isometric to $\partial\mathbb B_\kappa(\rho)$, and if the signs of principal curvatures of $\partial\Omega$ agree with those of $\partial\mathbb B_\kappa(\rho)$  
    then $\Omega$ is isometric to the geodesic ball  $\mathbb B_\kappa(\rho)$.  
\end{proposition}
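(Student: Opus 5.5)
We prove Proposition \ref{prop:iso-to-geod-ball} by building an explicit isometry from $\mathbb B_\kappa(\rho)$ onto $\Omega$. The isometry will be obtained by flowing inward along normal geodesics from $\Sigma$, and it is convenient to work in general dimension $n$. By hypothesis $\Sigma$ is totally umbilical with the same principal curvature $\lambda=1/\tn_\kappa(\rho)$ as the model sphere. The umbilic hypothesis together with the Gauss equation gives $R^\top$ equal to that of the model, and the sign condition fixes the correct orientation of $\lambda$. In the case $\kappa\le 0$, or $\kappa>0$ with $2\sqrt\kappa\rho<\pi$, we have $\lambda>0$, so $\Sigma$ is strictly mean-convex with respect to the inward normal.

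\emph{Step 1: the normal exponential map.} For $q\in\Sigma$ and $t\in[0,\rho)$, let $\gamma_q(t)$ be the inward normal geodesic. In constant curvature $\kappa$, the Riccati equation for the shape operator $S_t$ of the parallel hypersurfaces, $S_t'=S_t^2+\kappa$ with initial value $S_0=\lambda\,\mathrm{Id}$, has the explicit solution $S_t=\frac{\cn_\kappa(\rho-t)}{\sn_\kappa(\rho-t)}\mathrm{Id}$. This is exactly the model solution, and it stays finite for $t<\rho$. Hence the pulled-back metric in the collar is
\[ g=dt^2+\frac{\sn_\kappa(\rho-t)^2}{\sn_\kappa(\rho)^2}\,g_\Sigma, \]
which is precisely the metric of $\mathbb B_\kappa(\rho)\setminus\{o\}$ in geodesic polar coordinates around the center $o$. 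This expression holds as long as the map $F(q,t)=\gamma_q(t)$ is defined and stays in the interior.

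\emph{Step 2: no geodesic hits $\Sigma$ again, and no interior focal points occur.} Let $T$ be the supremum of the $t$ for which $F$ is defined on $\Sigma\times[0,t)$, is a local isometry onto its image, and has image in the interior. We must show that $T=\rho$. Suppose instead that $T<\rho$. Then some geodesic $\gamma_q$ reaches $\Sigma$ at time $T$, at a point $p$, and the two pieces of hypersurface meeting there can be compared. The level set $\{t=T\}$ near $q$ is a piece of a geodesic sphere of radius $\rho-T<\rho$, which is strictly more curved than $\Sigma$, and the tangency at $p$ comes from the interior side. A maximum-principle comparison of these two umbilic hypersurfaces, together with the requirement that the pieces bound $\Omega$ from the same side, gives a contradiction. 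I expect this step to be the main obstacle.

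To handle it, I would first work with the distance function $d=\mathrm{dist}(\cdot,\Sigma)$ and its cut locus, rather than with the global flow. A point realizing $\max d$ has $d\le\rho$, by a Heintze--Karcher type volume comparison (in constant curvature, mean-convex boundary with $H_1\ge\lambda$ gives inradius at most $\rho$). Since the explicit metric in Step 1 shows that every normal geodesic is minimizing up to its focal time $\rho$, the cut locus reduces to the single focal level $t=\rho$. The assumption $2\sqrt\kappa\rho\le\pi$ guarantees $\cn_\kappa\ge0$ there, so the sign of $S_t$ stays positive; this is what keeps the comparison valid.

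\emph{Step 3: collapsing to a point.} As $t\to\rho$, the level sets shrink to diameter $0$. Because $\Omega$ is compact and connected, the limit of the level sets is a single point $o$, and $F$ extends to the quotient $\Sigma\times[0,\rho]/(\Sigma\times\{\rho\})$. We then identify $\Sigma$ with the model sphere via the given isometry. This gives a map $\mathbb B_\kappa(\rho)\to\Omega$ that is a local isometry away from $o$ and smooth at $o$, since the metric form in Step 1 matches the model. It is injective and surjective by the definition of $T$ and connectedness, and so it is an isometry.

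The main hidden subtlety is this: if $\Sigma$ were two copies of $\partial\mathbb B/\pm$, the argument would produce a quotient instead. But Proposition \ref{prop:iso-to-geod-ball} assumes that $\Sigma$ is isometric to the full sphere, so this case does not arise. The borderline case $2\sqrt\kappa\rho=\pi$, where $\lambda=0$, must be handled separately: there the collar metric $dt^2+\cos^2(\sqrt\kappa t)g_\Sigma$ is that of a hemisphere, and the same flow argument applies.
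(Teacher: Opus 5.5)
Your route differs from the paper's. The paper only uses a thin collar inside $\Omega$. It extends the collar by the ODE of Lemma~\ref{lem: isometric collar} and glues $\mathbb M_\kappa\setminus B$ onto $\Sigma$ to obtain a complete space form (Lemma~\ref{lem: embed into space form}). It then lifts $\Sigma$ to the universal cover, where it becomes a union of radius-$\rho$ spheres, and uses $2\sqrt\kappa\rho\le\pi$ only to make those balls disjoint (Lemma~\ref{lem: disjoint balls}). It finishes with Lemma~\ref{lem: local diffeomorphism implies diffeomorphism}.

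You instead develop the model ball directly into $\Omega$ along inward normal geodesics, with the radius hypothesis entering through $\lambda\ge 0$ and $\cn_\kappa(\rho-t)>0$. This avoids gluing and covering spaces and can be made to work, and your Step~1 collar computation is correct. However, the two global steps are not established as written.

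\textbf{Step 2.} The comparison you describe gives no contradiction. A sphere of radius $\rho-T$, more curved than $\Sigma$ and tangent to it from inside while bending the same way, is perfectly consistent, as with a small ball touching a big one internally. What actually happens at a first contact point $p=\gamma_q(T)\in\Sigma$ is the following. $F(\cdot,T)$ is an immersed hypersurface lying in $\Omega$ through $p$, hence tangent to $\Sigma$ there. Its normal $\gamma_q'(T)$ arrives from the interior, so it equals the \emph{outward} normal $\nu(p)$. Thus the level set bends toward the exterior with curvature $\cn_\kappa(\rho-T)/\sn_\kappa(\rho-T)>0$, while $\Sigma$ bends toward the interior with $\lambda\ge 0$. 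Written as a graph over $\Sigma$ in Fermi coordinates, the level set has Hessian $\bigl(\cn_\kappa(\rho-T)/\sn_\kappa(\rho-T)+\lambda\bigr)\Id>0$ at $p$, so it leaves $\Omega$. This pointwise second-order argument is all you need; no maximum principle or Heintze--Karcher bound is required.

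Drop the cut-locus fallback. The claim that the explicit metric shows every normal geodesic minimizes up to time $\rho$ is circular. Absence of focal points does not exclude cut points, and minimality up to $\rho$ is equivalent to injectivity of $F$, which is your conclusion.

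\textbf{Step 3.} Injectivity does not follow from the definition of $T$, which says nothing about $F$ being one-to-one. This is exactly the topological input the paper supplies via Lemma~\ref{lem: local diffeomorphism implies diffeomorphism}. You need two things.
\begin{enumerate}
\item[(i)] The limit point $o$ must be interior. If $o\in\Sigma$, extend $\Omega$ slightly near $o$. The local isometry from the punctured model ball is then the restriction of an isometry of the model, so it extends across the centre and covers a full neighbourhood of $o$. That neighbourhood contains points outside $\Omega$, which is a contradiction. Hence $\overline F$ maps the open ball into $\mathrm{int}\,\Omega$ and $\partial\mathbb B_\kappa(\rho)$ bijectively onto $\Sigma$.
\item[(ii)] You need a covering argument. $\overline F$ restricted to the open ball is a proper local diffeomorphism onto the connected set $\mathrm{int}\,\Omega$, hence a finite covering. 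A point of $\Sigma$ has a single preimage, near which $\overline F$ is injective, so the number of sheets is one.
\end{enumerate}
With these repairs your argument becomes a complete alternative proof.
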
 

Theorem \ref{thm: determine 3d geodesic balls} follows immediately from these two propositions.

\subsection{Proof of Proposition \ref{prop:iso-to-geod-ball}}

More generally, let $(\Omega,g)$ be a compact Riemannian manifold of dimension $n$ with constant curvature $\kappa$ and smooth boundary $\Sigma$.  

\begin{lemma}\label{lem: isometric collar}
    Let $(\Omega_j,g_j)$, $j=1,2$,  be compact Riemannian manifolds of constant sectional curvature $\kappa$ and smooth boundaries $\Sigma_j$. Denote the second fundamental form of $\Sigma_j$ by $h_j$. If there exists an isometry $f:\Sigma_1\to\Sigma_2$ such that $f^*h_2 = h_1$, then $f$ can be extended to an isometry from a collar neighborhood of $\Sigma_1$ to a collar neighborhood of $\Sigma_2$.
\end{lemma}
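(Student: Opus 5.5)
The plan is to write both metrics in boundary normal coordinates on collars and show that, after identifying $\Sigma_1$ with $\Sigma_2$ via $f$, the two families of slice metrics $g_{j,t}$ on $\Sigma_j$ coincide for small $t$. Concretely, choose $\varepsilon>0$ so that both $(\Omega_j,g_j)$ admit $\varepsilon$-collars. Let $\Phi_j:\Sigma_j\times[0,\varepsilon)\to\Omega_j$ be the normal exponential maps $(q,t)\mapsto\gamma_q(t)$. Then
\[
\Phi_j^*g_j=g_{j,t}+dt\otimes dt ,
\]
and we set $F:=\Phi_2\circ(f\times\mathrm{id})\circ\Phi_1^{-1}$. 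It suffices to show that $f^*g_{2,t}=g_{1,t}$ for all $t\in[0,\varepsilon)$; $F$ is then an isometry of the collars extending $f$.

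The key input is that in constant curvature the normal evolution of $g_t$ is governed by a closed ODE. Write $S_t=g_t^{-1}h_t$ for the shape operator of the slice $\{x^n=t\}$ (with $h_t=-\tfrac12\partial_t g_t$ as in Section~\ref{subsec: exp of inv poly}), viewed as an endomorphism of $T\Sigma$. The Riccati equation (cf.\ \eqref{eq: g (0,2)}) gives
\[
\partial_t g_t=-2h_t,\qquad \partial_t S_t = S_t^2+\kappa\,\Id
\]
up to the sign conventions of the paper. Here $\Rm^\perp=\kappa g_t$ because the curvature is constant; this is the one place where constant curvature is used. Both equations are pointwise ODEs in $t$ for each $q\in\Sigma$.

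First, I would solve the system explicitly. Since $S_t$ commutes with its own evolution, $S_t$ stays diagonal in a $g_0$-orthonormal eigenframe of $S_0$, and each principal curvature $\lambda_i(t)$ solves the scalar Riccati equation $\lambda'=\lambda^2+\kappa$. Then
\[
g_t(X,Y)=g_0\bigl((\cn_\kappa(t)\Id - \sn_\kappa(t)S_0)X,\;(\cn_\kappa(t)\Id-\sn_\kappa(t)S_0)Y\bigr),
\]
which can also be obtained from Jacobi fields: $d\Phi(q,t)X=J(t)$ with $J''=-\kappa J$, $J(0)=X$, $J'(0)=-S_0X$ (sign to be fixed). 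Thus $g_t$ is determined pointwise by $(g_0,h_0)$ alone. Because $f^*g_{2,0}=g_{1,0}$ and $f^*h_2=h_1$, we get $f^*S_{2,0}=S_{1,0}$, and hence $f^*g_{2,t}=g_{1,t}$ for all $t<\varepsilon$. Alternatively, one can avoid the explicit formula and appeal to uniqueness for the ODE system in $(g_t,S_t)$ with equal initial data. I prefer the Jacobi-field formula because it sidesteps any question of whether the curvature term depends on unknowns.

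The main point needing care is justifying that the evolution equation closes, i.e.\ that $\Rm(\cdot,\partial_t,\cdot,\partial_t)=\kappa g_t$ in the collar without knowing $g_t$ a priori. This holds because constant sectional curvature gives
\[
\Rm(X,Y,Z,W)=\kappa\bigl(g(X,Z)g(Y,W)-g(X,W)g(Y,Z)\bigr)
\]
identically, so the Jacobi equation is $J''+\kappa J=0$ in a parallel frame along each normal geodesic. Finally, I would check that $F$ is smooth and bijective between the chosen collars, which is immediate from the $\Phi_j$ being diffeomorphisms. The shrinking of $\varepsilon$ so that both collars exist is the only other detail.
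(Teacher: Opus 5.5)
Your proposal is correct and takes essentially the same route as the paper. The paper writes the closed system $\dot g=-2h$, $\dot h=-hg^{-1}h+\kappa g$ in boundary normal coordinates and invokes ODE uniqueness with matching initial data $(g_1|_{\Sigma_1},h_1)=(f^*(g_2|_{\Sigma_2}),f^*h_2)$, which is exactly your stated alternative. Your Jacobi-field formula $g_t=g_0\bigl((\cn_\kappa(t)\Id-\sn_\kappa(t)S_0)\cdot,(\cn_\kappa(t)\Id-\sn_\kappa(t)S_0)\cdot\bigr)$ is the solved form of that system, and your sign $\partial_tS_t=S_t^2+\kappa\Id$ agrees with the paper's conventions. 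Your global map $F=\Phi_2\circ(f\times\mathrm{id})\circ\Phi_1^{-1}$ does the same job as the paper's chart-by-chart patching, just more cleanly.
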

\begin{proof}
    Let $(x_j^1,\dots,x_j^{n-1},x_j^n)$ be a boundary normal coordinate system defined on $U_j\subset\Omega_j$ such that $x_1^\alpha = x_2^\alpha \circ f$.  
    Let $g_j(t)$ denote the matrix representation of the pull-back Riemannian metric $(g_j)_t$ with respect to the coordinates $(x_j^1,\dots,x_j^{n-1})$, and let $h_j(t) = -\frac{1}{2}\partial_tg_j(t)$. By definition and the formula \eqref{eq: g (0,2)}, $g_j(t)$ and $h_j(t)$ satisfy the following system of first-order differential equations:
    \begin{equation}
        \label{eq: ODE near boundary}
        \begin{cases}
            \dot g_j(t) = -2h_j(t),\\
            \dot h_j(t) = -h_j(t)g_j(t)^{-1}h_j(t) + \kappa g_j(t).
        \end{cases}
    \end{equation}
    For each $j$, equation \eqref{eq: ODE near boundary} has a unique small-time solution $(g_j(t),h_j(t))$ with initial data $g_j(0) = g_j|_{\Sigma_j}$ and $h_j(0) = h_j$. However, $(\tilde g_1(t),\tilde h_1(t)) = (f^*g_2(t),f^* h_2(t))$ is also a solution of \eqref{eq: ODE near boundary} with initial values $\tilde g_1(0) = f^*(g_2|_{\Sigma_2}) = g_1|_{\Sigma_1}$ and $\tilde h_1(0) = f^*(h_2) = h_1$. Therefore, $\tilde g_1(t) = g_1(t)$ and thus, a sufficiently small neighborhood of $U_1$ is isometric to a neighborhood of $U_2$ via the coordinate identification   $(x_1^1,\dots,x_1^{n-1},x_1^n)\mapsto (x_2^1,\dots,x_2^{n-1},x_2^n)$. By taking finitely many coordinate charts covering $\Sigma_1$, we can extend $f$ to an isometry from a collar neighborhood of $\Sigma_1$ to a collar neighborhood of $\Sigma_2$.
\end{proof}
\begin{lemma}\label{lem: embed into space form} 
    Suppose $\Sigma = \bigsqcup_{j=1}^N\Sigma_j$. If each $\Sigma_j$ is totally umbilical and is isometric to $\partial\mathbb B_\kappa(\rho_j)$ for some $\rho_j>0$, then $\Omega$ can be embedded isometrically into a space form $M_\kappa$ of the same dimension with constant curvature $\kappa$. Moreover, $M_\kappa$ is compact if $\kappa > 0$.
\end{lemma}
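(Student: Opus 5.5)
The plan is to glue model pieces onto each boundary component and then invoke the Killing--Hopf classification. Fix a component $\Sigma_j$. It is totally umbilical and isometric to the round sphere $\partial\mathbb B_\kappa(\rho_j)$. So by the Gauss equation its umbilic principal curvature $\lambda_j$ satisfies
\[
\lambda_j^2+\kappa = \frac{1}{\sn_\kappa^2(\rho_j)},
\]
and hence $\lambda_j=\pm 1/\tn_\kappa(\rho_j)$. For each sign there is a model region $W_j$ of constant curvature $\kappa$ whose boundary is a geodesic sphere of radius $\rho_j$ with exactly this second fundamental form, the sign being taken with respect to the outward normal of $\Omega$:
\begin{itemize}
\item if the mean curvature vector points into $\Omega$, take $W_j:=\mathbb M_\kappa\setminus\mathbb B_\kappa(\rho_j)$;
\item otherwise, take $W_j:=\overline{\mathbb B_\kappa(\rho_j)}$.
\end{itemize}
For $\kappa>0$ one should phrase this with $\rho_j$ or $\pi/\sqrt\kappa-\rho_j$. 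If $\lambda_j=0$, the model is a hemisphere for $\kappa>0$; the case $\lambda_j=0$ cannot occur for $\kappa\le 0$, since then the left side would be $\le 0$.

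The isometry $f_j:\Sigma_j\to\partial W_j$ can be chosen freely: both second fundamental forms equal $\lambda_j$ times the metric, so any isometry satisfies $f_j^*h_{W_j}=h_{\Sigma_j}$ with the orientation conventions matched (the normal of $W_j$ is the opposite of $\nu$). By Lemma \ref{lem: isometric collar}, $f_j$ extends to an isometry between an inner collar of $\Sigma_j$ in $\Omega$ and an outer collar of $\partial W_j$ in the complement $\mathbb M_\kappa\setminus W_j$. Applying the ODE uniqueness to the doubled normal coordinate $t\in(-\varepsilon,\varepsilon)$ then shows the following. If we glue $W_j$ to $\Omega$ along $\Sigma_j$ via $f_j$, the metric on $\Omega\cup_{f_j}W_j$ is smooth across the seam, since both sides are restrictions of one constant-curvature metric in boundary normal coordinates. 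Doing this for all $j$ yields a manifold $M:=\Omega\cup\bigcup_j W_j$ without boundary, of constant curvature $\kappa$, into which $\Omega$ embeds isometrically.

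Next I show that $M$ is complete. Each $W_j$ is either compact or an exterior region of $\mathbb M_\kappa$, and an exterior region is complete as a metric space. Therefore any Cauchy sequence eventually lies in the compact set $\Omega$ or in a single $W_j$, so $M$ is complete, and it is connected because $\Omega$ is. By Killing--Hopf, $M$ is isometric to $\mathbb M_\kappa/\Gamma$ for a discrete group $\Gamma$ acting freely, so $M$ is a space form of curvature $\kappa$. When $\kappa>0$ it is compact, as every complete manifold with positive constant curvature is by Bonnet--Myers.

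The main obstacle I expect is bookkeeping rather than analysis. The sign convention $H_1=\langle\vec H_1,-\nu\rangle$ has to be applied consistently, so that the chosen model side really has the opposite normal. Smoothness across the seam has to be justified by the $C^\infty$ matching of the normal jets $g(t)$, which follows from uniqueness for \eqref{eq: ODE near boundary} with $t$ allowed negative. For $\kappa>0$ with $\lambda_j=0$ or with ball-versus-complement ambiguity, both complementary caps of $\mathbb S^n_\kappa$ are compact, so either choice works as long as the normal orientation is right.
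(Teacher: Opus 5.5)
Your proposal is correct and follows essentially the paper's route: use the Gauss equation to match each $\Sigma_j$, including its second fundamental form, with a model geodesic sphere, extend this to a collar isometry by Lemma \ref{lem: isometric collar}, glue in a model region, and check completeness via Hopf--Rinow. Two points go beyond the paper's write-up, which always attaches $\mathbb M_\kappa\setminus B_j$ and does not argue compactness: your case split (gluing in $\overline{\mathbb B_\kappa(\rho_j)}$ when the mean curvature vector points out of $\Omega$) and your Bonnet--Myers argument for compactness when $\kappa>0$. In your completeness step, a Cauchy sequence need not eventually stay in one piece, but a subsequence in one piece suffices.
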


\begin{proof}
    Since $\Sigma$ is totally umbilical, the second fundamental form of each $\Sigma_j$ is a constant multiple of the metric tensor. If we properly choose the direction of the unit normal along $\partial\mathbb B_\kappa(\rho)$, the isometry $\Sigma_j\to\partial\mathbb B_\kappa(\rho_j)$ then preserves the second fundamental form by the Gauss equation. By Lemma \ref{lem: isometric collar}, each $\Sigma_j$ has a collar neighborhood that is isometric to a collar neighborhood of the boundary of a geodesic ball $B_j\subset\mathbb M_\kappa$ of radius $\rho_j$. Therefore, we can form a Riemannian manifold of constant sectional curvature $\kappa$ by attaching to each $\Sigma_j\subset\Omega$ the region $\Omega_j := \mathbb M_\kappa\setminus B_j$. Denote the resulting manifold by $M_\kappa$. To prove $M_\kappa$ is complete, let $F$ be a bounded closed subset of $M_\kappa$. The set $F\cap\Omega$ is compact since $\Omega$ is compact, and the set $F\cap\Omega_j$ is compact since it can be identified with a bounded closed subset of $\mathbb M_\kappa$. Because $F$ is arbitrary, it follows from Hopf--Rinow theorem that $M_\kappa$ is complete.
\end{proof}

\begin{lemma}\label{lem: disjoint balls}
    Let $B_1$ and $B_2$ be geodesic balls of the same radius $\rho$ in $\mathbb M_\kappa$ centered at $o_1$ and $o_2$, respectively. If the geodesic spheres $\partial B_1$ and $\partial B_2$ are disjoint, and if $2\sqrt{\kappa}\rho \le \pi$ when $\kappa>0$, then $\overline B_1$ and $\overline B_2$ are disjoint.
\end{lemma}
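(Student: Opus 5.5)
The plan is to reduce the statement to a comparison of distances between centers, using the geometry of geodesic spheres in the simply connected model $\mathbb M_\kappa$. Set $d=d(o_1,o_2)$. I argue by contradiction: assume $\overline B_1\cap\overline B_2\neq\emptyset$. Then there is a point $p$ with $d(o_1,p)\le\rho$ and $d(o_2,p)\le\rho$, so the triangle inequality gives $d\le 2\rho$. In the case $\kappa>0$ the hypothesis $2\sqrt\kappa\rho\le\pi$ then gives $d\le\pi/\sqrt\kappa$, which is at most the diameter of $\mathbb M_\kappa$. Hence a minimizing geodesic $\gamma$ from $o_1$ to $o_2$ exists, with length $d$. (If $o_1=o_2$, the two spheres coincide and so are not disjoint. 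We may therefore assume $d>0$.)

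The main step is to produce a common point of $\partial B_1$ and $\partial B_2$ by a continuity (intermediate value) argument. I will consider the continuous function
\[
F(p)=d(o_1,p)-d(o_2,p)
\]
restricted to $\partial B_1$. The goal is to find a point of $\partial B_1$ whose distance to $o_2$ is $\le\rho$ and another whose distance is $\ge\rho$. Connectedness of $\partial B_1$ (a sphere of dimension $n-1\ge 1$; if $n=2$ use that a circle is connected) then yields a point with $d(o_2,p)=\rho$. Such a point lies on $\partial B_1\cap\partial B_2$, contradicting disjointness.

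The two witness points come from the geodesic through $o_1$ and $o_2$:
\begin{itemize}
\item The \emph{near point} $p_+=\gamma(\rho)$, obtained by moving from $o_1$ toward $o_2$, lies on $\partial B_1$ because $\rho<\pi/\sqrt\kappa$, so geodesics of length $\rho$ from $o_1$ are minimizing. Its distance to $o_2$ is $|d-\rho|\le\rho$, since $0<d\le2\rho$.
\item The \emph{far point} $p_-=\gamma(-\rho)$, obtained by extending $\gamma$ backward past $o_1$, also lies on $\partial B_1$. Its distance to $o_2$ is $\min(d+\rho,\,2\pi/\sqrt\kappa-d-\rho)$ when $\kappa>0$, and $d+\rho$ otherwise.
\end{itemize}
For $\kappa\le0$ the far point clearly satisfies $d(o_2,p_-)=d+\rho\ge\rho$. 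For $\kappa>0$ we need $2\pi/\sqrt\kappa-d-\rho\ge\rho$, that is $d\le 2\pi/\sqrt\kappa-2\rho$. This follows from $d\le2\rho\le\pi/\sqrt\kappa\le 2\pi/\sqrt\kappa-2\rho$, which uses $2\rho\sqrt\kappa\le\pi$ a second time.

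The step I expect to be delicate is precisely the $\kappa>0$ bookkeeping: checking that both radius-$\rho$ geodesic points really lie on the sphere, and computing distances on the round sphere (where the cut locus is the antipode). This is exactly where the hypothesis $2\sqrt\kappa\rho\le\pi$ enters. Without it, two disjoint small spheres can bound complementary large balls that overlap, as happens for caps larger than hemispheres. Everything else is the intermediate value theorem on the connected sphere $\partial B_1$.
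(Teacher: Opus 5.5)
Your argument is correct, but it reaches the key step by a different route from the paper. The overall frame is the same. Both proofs argue by contradiction and use the triangle inequality to reduce to the claim that $d(o_1,o_2)\le 2\rho$ forces $\partial B_1\cap\partial B_2\neq\varnothing$.

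The paper proves that claim by an explicit construction. It takes the midpoint $p$ of a minimizing geodesic from $o_1$ to $o_2$ and moves a distance $b$ along a geodesic perpendicular to it. It chooses $b$ from the Pythagorean theorem of the model space, $b^2=\rho^2-a^2$ or $\cn_\kappa(b)=\cn_\kappa(\rho)/\cn_\kappa(a)$. The hypothesis $2\sqrt\kappa\rho\le\pi$ is used only to make $\cn_\kappa^{-1}$ well defined there.

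You instead apply the intermediate value theorem to $d(o_2,\cdot)$ on the connected sphere $\partial B_1$, with witnesses $\gamma(\pm\rho)$ on the axis. For you, the hypothesis ensures that geodesic spheres are genuine spheres and that the far point stays at distance $\ge\rho$ from $o_2$.

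What each buys:
\begin{itemize}
\item The paper's construction exhibits a concrete common point. It relies on constant-curvature trigonometry case by case, however, and its formula becomes $0/0$ when $\rho=\pi/(2\sqrt\kappa)$ and the centers are antipodal. That case is harmless, since the spheres then coincide.
\item Your argument needs only three facts: geodesics of length at most $\pi/\sqrt\kappa$ minimize, distance is continuous, and geodesic spheres are connected. It treats all $\kappa$ and all degenerate configurations ($o_1=o_2$, antipodal centers) uniformly.
\end{itemize}

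Two small remarks on your write-up.
\begin{itemize}
\item The minimizing $\gamma$ exists by completeness (Hopf--Rinow). It does not follow from the bound $d\le\pi/\sqrt\kappa$, so that ``hence'' is a non sequitur, though a harmless one.
\item The far-point estimate does not need the exact great-circle distance. The segment $\gamma|_{[-\rho,\rho]}$ has length $2\rho\le\pi/\sqrt\kappa$, so it is minimizing, and therefore $d(o_2,p_-)\ge d(p_-,p_+)-d(p_+,o_2)\ge 2\rho-\rho=\rho$.
\end{itemize}
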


\begin{proof}
	First we show, under the stated assumption, that $d(o_1,o_2)>2\rho$. Otherwise, let $p$ be the midpoint of a minimizing geodesic from $o_1$ to $o_2$ and set
	\[
	a:=d(o_1,p)=d(p,o_2)\le\rho.
	\]
	Let $\ell$ be a geodesic through $p$ perpendicular to this minimizing geodesic, and choose $q\in\ell$ such that $b:=d(p,q)$ is given by 
	\[
	b=\sqrt{\rho^2-a^2}\quad\text{if }\kappa=0,
	\qquad
	b=\cn_\kappa^{-1}\!\left(\frac{\cn_\kappa(\rho)}{\cn_\kappa(a)}\right)
	\quad\text{if }\kappa\ne0.
	\]
	The law of cosines gives $d(o_1,q)=d(o_2,q)=\rho$, contradicting $\partial B_1\cap\partial B_2=\varnothing$. When $\kappa>0$, the assumption $2\sqrt{\kappa}\rho\le\pi$ ensures that the inverse above is well defined, because $0\le\cn_\kappa(\rho)\le\cn_\kappa(a)$ and $\cn_\kappa$ is decreasing on $[0,\frac{\pi}{2\sqrt{\kappa}}]$.
	
	Now we prove the lemma. If $p\in \overline B_1\cap \overline B_2$, then 
	\[d(o_1,o_2) \le d(o_1,p) + d(o_2,p) \le \rho + \rho = 2\rho,\]
	which is a contradiction. Therefore, $\overline B_1\cap\overline B_2=\varnothing$.
\end{proof} 

\begin{lemma}
    \label{lem: local diffeomorphism implies diffeomorphism}
    Let $\Omega_1$ and $\Omega_2$ be two compact smooth manifolds with boundary. Assume that $\Omega_2$ is connected. If $f:\Omega_1\to\Omega_2$ is a local diffeomorphism that maps $\partial\Omega_1$ to $\partial\Omega_2$ as a smooth embedding, then $f$ is a diffeomorphism.
\end{lemma}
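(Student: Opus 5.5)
The plan is to show that $f$ is a finite covering map and then use the boundary condition to force the number of sheets to be one. The key point is that a local diffeomorphism between manifolds with boundary sends interior points to interior points. It also sends boundary points to boundary points: this holds by hypothesis on $\partial\Omega_1$, and conversely a boundary point cannot map to an interior point of $\Omega_2$. Hence $f^{-1}(\partial\Omega_2)=\partial\Omega_1$, and $f$ restricts to a local diffeomorphism $\operatorname{int}\Omega_1\to\operatorname{int}\Omega_2$.

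First I would check that $f$ is proper and a covering. Since $\Omega_1$ is compact and $\Omega_2$ is Hausdorff, $f$ is a closed map, so $f(\Omega_1)$ is closed. Because $f$ is a local diffeomorphism respecting boundaries, it is also open, so $f(\Omega_1)$ is open. As $\Omega_2$ is connected, $f$ is surjective. A proper local homeomorphism onto a connected, locally compact Hausdorff space is a finite-sheeted covering map; this is the standard argument, in which each fiber is finite and discrete and one takes a neighborhood of $y$ small enough to lie inside the intersection of the images of disjoint neighborhoods of the preimages, minus the closed image of the complement. The sheet number $k$ is locally constant, and hence constant because $\Omega_2$ is connected.

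Next I would use the boundary to get $k=1$. Restricted to $\partial\Omega_1=f^{-1}(\partial\Omega_2)$, the map $f$ is still a $k$-sheeted covering $\partial\Omega_1\to\partial\Omega_2$, since the restriction of a covering to the full preimage of a subspace is a covering. By hypothesis $f|_{\partial\Omega_1}$ is an embedding, so it is injective. Therefore every fiber over a point of $\partial\Omega_2$ has exactly one element, provided $\partial\Omega_2\neq\varnothing$, which gives $k=1$. A bijective local diffeomorphism is a diffeomorphism, which finishes the proof.

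The main obstacle is the degenerate case. If $\partial\Omega_2$ is empty, then $\partial\Omega_1$ is empty too, and the embedding hypothesis gives no information; for instance, a double cover of closed manifolds is a counterexample. I will note that the lemma is applied only when the boundary is nonempty, and I will state that assumption implicitly when the argument uses it. The remaining care is in verifying the claim that boundary points cannot map to interior points; this follows from invariance of domain, or equivalently from the fact that the differential of a local diffeomorphism of manifolds with boundary preserves half-space charts.
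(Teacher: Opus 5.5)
Your proposal is correct and follows the same route as the paper. The image of $f$ is open and closed, so $f$ is surjective; a proper surjective local diffeomorphism is a covering; and injectivity on the boundary forces a single sheet. You also fill in steps the paper leaves implicit, namely $f^{-1}(\partial\Omega_2)=\partial\Omega_1$, so each fiber over a boundary point has exactly one element. You are right that $\partial\Omega_2\neq\varnothing$ is needed, since a double cover of closed manifolds violates the lemma as stated; the paper's proof tacitly uses this as well, and it holds in the application.
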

\begin{proof}
    Since local diffeomorphisms are open maps, $f(\Omega_1)$ is an open subset of $\Omega_2$. On the other hand, since $\Omega_1$ is compact and $\Omega_2$ is Hausdorff, $f(\Omega_1)$ is closed. By the connectedness of $\Omega_2$, $f$ is surjective. Every surjective proper local diffeomorphism is a covering map, hence $f$ is a covering map. However, since $f$ maps $\partial\Omega_1$ to $\partial\Omega_2$ as an embedding, it must be a diffeomorphism between $\Omega_1$ and $\Omega_2$.
\end{proof}

\begin{proof}[Proof of Proposition \ref{prop:iso-to-geod-ball}] 
    By Lemma \ref{lem: embed into space form}, $(\Omega,g)$ can be isometrically embedded into a space form $M_\kappa$. Let $\Pi:\mathbb M_\kappa\to M_\kappa$ be the Riemannian universal covering. Then $\Pi^{-1}(\Sigma)$ is a disjoint union $\sqcup_j \partial B_j$, where each $B_j$ is a geodesic ball of radius $\rho$. By Lemma \ref{lem: disjoint balls}, the balls $B_j$ are mutually disjoint. Since $\Pi$ is a local isometry, it preserves the direction of the mean curvature vector along $\Pi^{-1}(\Sigma)$. It follows that $\Pi$ maps a collar neighborhood of $\partial B_j$ in $B_j$ to a collar neighborhood of $\Sigma$ in $\Omega$. Therefore, $\Pi$ maps $B_j$ into $\Omega$ since $B_j$ is connected. By Lemma \ref{lem: local diffeomorphism implies diffeomorphism},  $\Omega$ is diffeomorphic to $\mathbb B_\kappa(\rho)$. In fact, it is isometric to $\mathbb B_\kappa(\rho)$ because $\Pi$ is a local isometry. 
\end{proof} 

We now prove Corollary \ref{cor: determine 3d geodesic balls in orientable space forms}.
\begin{proof}[Proof of Corollary \ref{cor: determine 3d geodesic balls in orientable space forms}]
By Lemma \ref{lem: spectrum controls chi sphere}, $\partial\Omega$ is either isometric to $\partial\mathbb B_\kappa(\rho)$ or is the disjoint union of two copies of $\partial\mathbb B_\kappa(\rho)/\pm$. The second possibility cannot occur: $\mathbb M_\kappa$ is orientable, and every boundary component of the domain $\Omega\subset\mathbb M_\kappa$ is therefore orientable. Consequently $\partial\Omega$ is connected, isometric to $\partial\mathbb B_\kappa(\rho)$, and totally umbilical with principal curvature $1/\tn_\kappa(\rho)$ with respect to the inward unit normal. By the fundamental theorem of hypersurfaces in space forms \cite[Theorem 20]{spivak1970comprehensive}, $\partial\Omega$ is a geodesic sphere of radius $\rho$ and hence, $\Omega$ is a geodesic ball of radius $\rho$. 
\end{proof}

\subsection{Proof of Proposition \ref{prop: spectrum determines boundary}}
When $\kappa>0$, Lemma \ref{lem: disjoint balls} fails to be true unless we impose extra conditions on the radii of the geodesic balls in consideration. We use a different argument to show that if $(\Omega^3,g)$ is Steklov isospectral to a geodesic ball $\mathbb B_\kappa(\rho)$, then its boundary is orientable.
\begin{lemma}\label{lem: spectrum implies orientability} 
   Suppose $\kappa > 0$. If $\Sigma$ is totally umbilical and is  isometric to the disjoint union of two copies of $\partial\mathbb B_\kappa(r)/\pm$,  then  $(\Omega,g)$ is a warped product
    \[\left(\mathbb RP^2\times \left[\rho,\frac{\pi}{\sqrt{\kappa}}-\rho\right], \sn_\kappa^2(t)g_{\mathbb RP^2} + dt^2\right),\]
    for $\rho=\min\{r, \frac{\pi}{\sqrt{\kappa}}-r\}$, where $g_{\mathbb RP^2}$ is the spherical metric on $\mathbb RP^2$ of constant sectional curvature $1$.
\end{lemma}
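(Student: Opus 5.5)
We would argue through the normal exponential flow from the boundary, using the collar ODE from the proof of Lemma \ref{lem: isometric collar}. Write $\Sigma=\Sigma_1\sqcup\Sigma_2$, with each $\Sigma_j$ isometric to $\partial\mathbb B_\kappa(r)/\pm$, that is, to a round $\mathbb RP^2$ of curvature $1/\sn_\kappa^2(r)$. Since $\Sigma_j$ is totally umbilical, $h=\lambda_j g$ with $\lambda_j$ constant, and the Gauss equation gives $\lambda_j^2+\kappa=1/\sn_\kappa^2(r)$. Hence $\lambda_j=\pm\cn_\kappa(r)/\sn_\kappa(r)=\pm 1/\tn_\kappa(\rho)$ for $\rho=\min\{r,\pi/\sqrt\kappa-r\}$, and we can write $\lambda_j=\cn_\kappa(t_j)/\sn_\kappa(t_j)$ with $t_j\in\{\rho,\pi/\sqrt\kappa-\rho\}$ and $\sn_\kappa(t_j)=\sn_\kappa(r)$. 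Solving \eqref{eq: ODE near boundary} with $g(0)=\sn_\kappa^2(t_j)g_{\mathbb RP^2}$ and $h(0)=\lambda_j g(0)$ shows that the metric in boundary normal coordinates near $\Sigma_j$ is $\sn_\kappa^2(t_j-s)g_{\mathbb RP^2}+ds^2$. Consequently, in the coordinate $t=t_j-s$ the collar is the warped product $\sn_\kappa^2(t)g_{\mathbb RP^2}+dt^2$, with $\Sigma_j$ at level $t_j$ and $t$ moving monotonically into $\Omega$.

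The plan is then to continue the map $\Phi_j:\Sigma_j\times[0,T)\to\Omega$, $(q,s)\mapsto\exp_q(s\nu_{\mathrm{in}})$, as long as it stays in the interior. It remains a local isometry onto its image with the warped metric above, because umbilicity and constant curvature propagate along the flow: the ODE solution is global in $s$ until $\sn_\kappa$ vanishes. The warped factor $\sn_\kappa(t_j-s)$ would collapse at $t=0$ or $t=\pi/\sqrt\kappa$. Such a collapse cannot happen inside the smooth manifold $\Omega$ with an $\mathbb RP^2$ cross-section: as $t\to 0$ the region would close up as the cone $\mathbb R^3/\pm$, which has a singular point rather than being smooth. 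Because $\Omega$ is compact, the flow must therefore hit $\partial\Omega$ before the collapse. The tube around $\Sigma_j$ is the unique normal flow, so it cannot re-hit $\Sigma_j$ transversally in a way compatible with the collar at $\Sigma_j$. It must instead reach $\Sigma_{3-j}$, with matching inward normals, at a parameter where the level set has area $\Area(\Sigma_{3-j})=\Area(\Sigma_j)$. Since $\sn_\kappa(t)=\sn_\kappa(r)$ on $(0,\pi/\sqrt\kappa)$ only at $t=\rho$ and $t=\pi/\sqrt\kappa-\rho$, and the tube starts at one of these values, it must end at the other. This forces $\lambda_1$ and $\lambda_2$ to have opposite signs relative to $t$, and it produces a local isometry $\mathbb RP^2\times[\rho,\pi/\sqrt\kappa-\rho]\to\Omega$ that sends the boundary onto $\Sigma$ by an embedding. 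Lemma \ref{lem: local diffeomorphism implies diffeomorphism} then upgrades it to an isometry. (When $\rho=\pi/(2\sqrt\kappa)$ the interval degenerates; we expect that this case is excluded or handled separately by the same argument, since then both ends sit at the same level.)

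The main obstacle is the middle step: justifying that the normal flow from $\Sigma_1$ stays an immersion and meets $\partial\Omega$ exactly in $\Sigma_2$ at the right level, without cut-locus or focal issues. Our first attempt would be the universal-cover argument of Proposition \ref{prop:iso-to-geod-ball}. Glue model ends to $\Omega$ to obtain a complete space form $M_\kappa$ as in Lemma \ref{lem: embed into space form}, here attaching along each $\Sigma_j$ the complementary region of $\mathbb S^3/\pm$. Then $M_\kappa$ is a compact spherical space form containing an embedded $\mathbb RP^2$. Lifting to $\mathbb S^3$, the preimage of $\Sigma$ consists of geodesic spheres of radius $\rho$ invariant under the antipodal map. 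This should identify $\Omega$ with the region between two concentric antipodally invariant spheres modulo $\pm$, which is exactly the stated warped product.
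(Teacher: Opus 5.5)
There is a genuine gap in the global step, and the fallback you propose to close it does not work. Your collar computation is fine. The problem comes at the first time $T$ at which the normal flow from $\Sigma_j$ reaches $\partial\Omega$. There you only know that the level surface lies in $\Omega$ and is tangent to $\partial\Omega$ at the contact points. Two tangent totally umbilical pieces with different constant principal curvatures touch only at isolated points. So you get neither a map on all of $\Sigma_j\times[0,T]$ sending $\Sigma_j\times\{T\}$ onto $\Sigma_{3-j}$, nor the area matching.

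This is not a technicality. $\mathbb S^3$ minus two disjoint \emph{non-concentric} balls of radius $\rho$ has the same local boundary data as the double cover of $\Omega$, and the flow from one sphere first touches the other at a single point. What excludes this for $\Omega$ is a global constraint coming from the non-orientable structure, which local flow arguments do not see.

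Your fallback cannot supply that constraint. A component isometric to $\partial\mathbb B_\kappa(r)/\pm$ cannot be capped by a smooth constant-curvature piece. The antipodal map of $\partial\mathbb B_\kappa(r)$ is the restriction of the reflection $v\mapsto -v$ through the center, so the natural cap $\mathbb B_\kappa(r)/\pm$ is a cone. Nor does $\partial\mathbb B_\kappa(r)/\pm$ sit inside $\mathbb S^3/\pm$: normalizing $\kappa=1$, the map $x\mapsto -x$ sends $\{\langle x,o\rangle=\cos r\}$ to $\{\langle x,o\rangle=-\cos r\}$.

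More generally, in any $\mathbb S^3/G$ an embedded totally umbilical $\mathbb RP^2$ lifts to a geodesic sphere on which some $\gamma\in G$ acts antipodally. Since $\gamma$ preserves the affine hyperplane spanned by that sphere, freeness forces the sphere to be a great sphere and $\gamma=-\Id$. Hence Lemma~\ref{lem: embed into space form} does not apply to $\Omega$, and ``antipodally invariant geodesic spheres of radius $\rho$'' exist only when $\rho=\pi/(2\sqrt\kappa)$.

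The missing idea, which is how the paper argues, is to pass to the orientation double cover $\Omega'$. It is connected, since $\Omega$ is non-orientable because $\partial\Omega$ is. The steps are:
\begin{enumerate}
\item The boundary of $\Omega'$ consists of two round $2$-spheres. Lemma~\ref{lem: embed into space form} caps these by geodesic balls, producing a closed space form $M=\mathbb S^3/G$.
\item The deck involution $\tau$ is antipodal on each boundary sphere and extends over each ball by $v\mapsto -v$ in polar coordinates. This gives an orientation-reversing isometric involution of $M$ with exactly two fixed points, the centers.
\item Lift $\tau$ to $\tilde\tau$ on $\mathbb S^3$ fixing a lift of one of these points. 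Then $\tilde\tau^2\in G$ has a fixed point, so $\tilde\tau^2=\Id$. An orientation-reversing orthogonal involution of $\mathbb R^4$ with isolated fixed points has a one-dimensional $(+1)$-eigenspace.
\item The same holds for $\theta\tilde\tau$ with $\theta\in G$. So the two three-dimensional $(-1)$-eigenspaces meet in dimension at least $2$. Hence $\theta=(\theta\tilde\tau)\tilde\tau$ fixes points of $\mathbb S^3$, and freeness gives $\theta=\Id$.
\item Thus $M=\mathbb S^3$, and the caps are disjoint balls centered at the antipodal fixed points $\pm e$ of $\tilde\tau$. Disjointness forces both radii to equal $\rho$.
\item Therefore $\Omega'$ is the warped product $\mathbb S^2\times[\rho,\pi/\sqrt\kappa-\rho]$, and $\Omega=\Omega'/\tau$.
\end{enumerate}
This is precisely the concentricity your flow argument could not reach.
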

\begin{proof}
    Let $(\Omega', g')$ denote the orientation double covering of $(\Omega,g)$. The boundary of $\Omega'$ is totally umbilical and is isometric to the disjoint union of two copies of $\partial\mathbb B_\kappa(\rho)$. By Lemma \ref{lem: embed into space form}, $\Omega'$ can be embedded into a compact space form $M$. Let $\tau$ be the nontrivial deck transformation of $\Omega'$. The restriction of $\tau$ to each connected component of $\partial \Omega'$ is just the antipodal map. When we glue in the geodesic balls, the antipodal map extends over the ball by sending $v$ to $-v$ in their geodesic polar coordinates. Therefore, we can extend $\tau$ to an involution on $M$ that reverses orientation, with exactly two fixed points: the centers of the balls. 

    By the Killing--Hopf Theorem, $M$ is diffeomorphic to $S^3/G$ for some discrete subgroup $G$ of $SO(4)$. Let $\Pi:S^3\to M$ denote the covering map. Since $S^3$ is simply-connected, the map $\tau\circ\Pi$ has a lift $\tilde\tau: S^3\to S^3$,  unique up to composition with a deck transformation in $G$. Let $p\in M$ be a fixed point of $\tau$, and choose a lift $\tilde p\in S^3$. By composing $\tilde\tau$ with an element of $G$, we may assume that $\tilde\tau(\tilde p) = \tilde p$. Since $\tau^2 = 1$, we have $\tilde\tau^2\in G$ and $\tilde\tau^2(\tilde p) = \tilde p$. Since $G$ acts on $S^3$ freely, $\tilde\tau^2$ must be the identity map. 

    Let $\phi$ be an arbitrary orientation-reversing isometric involution on $S^3$. Equivalently, $\phi$ is an orientation-reversing orthogonal transformation on $\mathbb R^4$. The eigenvalues of $\phi$ are $\pm 1$. Since $\det(\phi) = -1$, the dimension of $+1$-eigenspace is either $1$ or $3$, which means that $\phi$ fixes either two points or an equator of $S^3$. The original involution $\tau$ has only two fixed points. Hence it must be the first case for $\tilde\tau$. Without loss of generality, we assume they are the north pole and the south pole.

    Let $\theta\in G$, then $\theta\tilde\tau$ is also an orientation-reversing isometry of $S^3$, which descends to the same involution $\tau$ on $M$. The above argument applies to $\theta\tilde\tau$, too. Therefore, $\theta\tilde\tau$ has a $1$-dimensional $+1$-eigenspace. The $-1$-eigenspace of $\tilde\tau$ and the $-1$-eigenspace of $\theta\tilde\tau$ have at least a $2$-dimensional intersection, which means the $+1$-eigenspace of $(\theta\tilde\tau)\tilde\tau$ has dimension at least $2$. However, $(\theta\tilde\tau)\tilde\tau$ = $\theta\in G$ since $\tilde\tau$ is an involution. Hence $\theta$ is the identity map since the $G$-action on $S^3$ is free. Therefore, $G = \{\Id\}$ and thus, $M \simeq S^3$.

    From the above discussion, $\Omega'$ is diffeomorphic to $S^3\setminus (B_1\cup B_2)$, where $B_1$ and $B_2$ are geodesic balls in $S^3$. Since the covering map $\Omega'\to\Omega$ is a local isometry, $\Omega'$ is isometric to
    \[\left(\mathbb S^2\times \left[\rho,\frac{\pi}{\sqrt{\kappa}}-\rho\right], \sn_\kappa^2(t)g_{\mathbb S^{2}} + dt^2\right).\]
    The lemma follows by taking the quotient of $\Omega'$.
\end{proof}

\begin{lemma}
    \label{lem: nonisopectral}
    The warped product space
    \[M:=\left(\mathbb RP^2\times \left[\rho,\frac{\pi}{\sqrt{\kappa}}-\rho\right], \sn_\kappa^2(t)g_{\mathbb RP^2} + dt^2\right),\qquad \rho<\frac{\pi}{2\sqrt{\kappa}},\]
    is not Steklov isospectral to $\mathbb B_\kappa(\rho)$.
\end{lemma}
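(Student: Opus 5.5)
The plan is to separate $M$ from $\mathbb B_\kappa(\rho)$ using the low-order heat invariants \eqref{eq: a0}--\eqref{eq: a1}, and I expect the first-order invariant $a_1$ (total mean curvature) to do it. Both $M$ and $\mathbb B_\kappa(\rho)$ are compact connected three-dimensional manifolds of constant curvature $\kappa$ with smooth boundary, so these formulas apply to both. I will compute $\tilde a_0$ and $\tilde a_1$ for $M$ and compare them with the ball values recorded in \eqref{eq: H2 is nonnegative}.

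\emph{Area.} The boundary of $M$ consists of the slices $t=\rho$ and $t=\frac{\pi}{\sqrt\kappa}-\rho$. Each is a copy of $\mathbb RP^2$ with metric $\sn_\kappa^2(\rho)g_{\mathbb RP^2}$, since $\sn_\kappa(\frac{\pi}{\sqrt\kappa}-\rho)=\sn_\kappa(\rho)$. Each copy therefore has area $2\pi\sn_\kappa^2(\rho)$, giving $\tilde a_0(M)=4\pi\sn_\kappa^2(\rho)=\tilde a_0(\mathbb B_\kappa(\rho))$. So the area does not distinguish them, which is consistent with how $M$ arose in Lemma \ref{lem: spectrum implies orientability}. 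Likewise $\chi(\partial M)=2$, so the Euler characteristic is no help either.

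\emph{Mean curvature.} For the metric $\sn_\kappa^2(t)g+dt^2$, the slice $\{t\}$ has second fundamental form proportional to the induced metric. With respect to $\partial_t$ its principal curvatures are $\pm\cn_\kappa(t)/\sn_\kappa(t)$; I would fix the sign by $h=-\tfrac12\partial_t g$ in boundary normal coordinates, using the convention $H_1=\langle\vec H_1,-\nu\rangle$.
\begin{itemize}
\item At $t=\rho$, the inward direction is $+\partial_t$. The boundary normal coordinate is $x^n=t-\rho$, and $\partial_{x^n}\sn_\kappa^2>0$ because $\rho<\frac{\pi}{2\sqrt\kappa}$. Hence $H_1=-\cn_\kappa(\rho)/\sn_\kappa(\rho)=-1/\tn_\kappa(\rho)$.
\item At $t=\frac{\pi}{\sqrt\kappa}-\rho$, the inward direction is $-\partial_t$. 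The metric factor again increases as one moves inward, because $\sn_\kappa$ increases toward $t=\frac{\pi}{2\sqrt\kappa}$. So again $H_1=-1/\tn_\kappa(\rho)$.
\end{itemize}
Geometrically, both boundary spheres in the double cover are concave from the inside. Hence
\[
\tilde a_1(M)=-4\pi\sn_\kappa(\rho)\cn_\kappa(\rho),\qquad \tilde a_1(\mathbb B_\kappa(\rho))=4\pi\sn_\kappa(\rho)\cn_\kappa(\rho).
\]

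\emph{Conclusion.} Since $0<\rho<\frac{\pi}{2\sqrt\kappa}$, we have $\sn_\kappa(\rho)\cn_\kappa(\rho)>0$, so the two values of $a_1$ differ. By \eqref{eq: a1}, $a_1$ is a Steklov spectral invariant, so $M$ and $\mathbb B_\kappa(\rho)$ cannot be isospectral.

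The only delicate step is the sign bookkeeping for $H_1$ on the far component $t=\frac{\pi}{\sqrt\kappa}-\rho$, and I would double-check it directly with $h_{\alpha\beta}=-\frac12\partial_{x^n}g_{\alpha\beta}$. As a cross-check, the same conclusion follows from Lemma \ref{lem: spectrum controls chi sphere}. Isospectrality would force principal curvatures $+1/\tn_\kappa(\rho)$, whereas $M$ has principal curvatures $-1/\tn_\kappa(\rho)$ on both boundary components.
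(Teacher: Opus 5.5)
Your proof is correct, and it takes a genuinely different route from the paper's.

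The sign bookkeeping you flagged is right. Since $\sn_\kappa(\pi/\sqrt\kappa-s)=\sn_\kappa(s)$, both components of $\partial M$ carry the slice metric $\sn_\kappa^2(\rho+x^n)g_{\mathbb RP^2}$ in boundary normal coordinates. Hence $h=-\frac{\cn_\kappa(\rho)}{\sn_\kappa(\rho)}\,g$ on each component, and $\tilde a_1(M)=-4\pi\sn_\kappa(\rho)\cn_\kappa(\rho)<0<\tilde a_1(\mathbb B_\kappa(\rho))$. Formula \eqref{eq: a1} applies to $M$, since $M$ is a compact connected constant-curvature $3$-manifold with boundary.

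The paper instead argues globally. It uses the isometric actions of $\Ogroup{3}$ on the ball and of $\operatorname{SO}_3(\mathbb R)\times\mathbb Z_2$ on $M$, together with Schur's lemma. A Wronskian argument then gives strict monotonicity of the radial Steklov eigenvalues. From this it shows that the first nonzero eigenvalue has multiplicity exactly $3$ for the ball, but $1$, $5$ or $6$ for $M$.

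Your route is far shorter and shows that, for the lemma as stated, a single local invariant suffices. As you observe, Lemma \ref{lem: spectrum controls chi sphere} already forces $H_1=\tilde a_1/\tilde a_0=1/\tn_\kappa(\rho)>0$. So in the regime $\rho<\pi/(2\sqrt\kappa)$ the lemma could be bypassed altogether.

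What the multiplicity argument buys is independence from the sign of the boundary curvature: it separates $M$ from every geodesic ball. This includes $\mathbb B_\kappa(\pi/\sqrt\kappa-\rho)$, whose boundary has the same area, the same constant mean curvature $-1/\tn_\kappa(\rho)$ and the same Euler characteristic as $\partial M$. For that ball all of \eqref{eq: a0}--\eqref{eq: a2}, \eqref{eq: b1}, \eqref{eq: b2} agree with those of $M$, so your $a_1$ test cannot tell them apart. That comparison is exactly what Proposition \ref{prop: spectrum determines boundary}, stated for every $\rho>0$, needs when the model ball has radius above $\pi/(2\sqrt\kappa)$. In that case Lemma \ref{lem: spectrum implies orientability} produces the warped product with the complementary parameter.
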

\begin{proof}
    Recall that the Lie group $\Ogroup{3}$ acts on $\mathbb B_\kappa(\rho)$ isometrically by rotation, and there is a decomposition 
    \[L^2(\partial\mathbb B_\kappa(\rho))\simeq\widehat\bigoplus_{\ell=0}^\infty\mathcal H_\ell,\]
    where $\mathcal H_\ell$ is the eigenspace of the boundary Laplacian corresponding to the eigenvalue $\lambda_\ell=\ell(\ell+1)$, $\dim\mathcal H_\ell = 2\ell+1$. Since the Dirichlet-to-Neumann operator $\Lambda(\mathbb B_\kappa(\rho))$ commutes with the isometric action, by Schur's lemma, we have the operator decomposition
    \[\Lambda(\mathbb B_\kappa(\rho))|_{\mathcal H_\ell} = \mu_\ell \Id_{\mathcal H_\ell}.\]
    In particular, the first nonzero Steklov eigenvalue of $\mathbb B_\kappa(\rho)$ has multiplicity at least $3$.

    In fact, we can show the strict inequalities $\mu_j<\mu_k$ if $j<k$; hence the first nonzero eigenvalue $\mu_1$ has exact multiplicity $3$. Note that the Laplacian on $\mathbb B_\kappa(\rho)$ has the decomposition
    \[\Delta = \partial_r^2 + \frac{2}{\tn_\kappa(r)}\partial_r + \frac{1}{\sn_\kappa^2(r)}\Delta_{\mathbb S^2},\]
    where $r$ is the radial distance and $\Delta_{\mathbb S^2}$ is the Laplacian on the standard unit sphere. Take $\psi\in\mathcal H_\ell$. By separation of variables, the unique harmonic extension of $\psi$ is given by $\mathcal H\psi(r,\theta) = y_\ell(r)\psi(\theta)$ in the geodesic polar coordinates at the center, where $y_\ell(r)$ satisfies the ordinary differential equation:
    \[\bigl(\sn_\kappa^2(r)y_\ell'(r)\bigr)'
=\ell(\ell+1)y_\ell(r),\qquad y_\ell(\rho) = 1.\]
    By the Fuchs-Frobenius theory \cite[Theorem 4.1]{olver1997asymptotics}, such a solution exists and is unique. Moreover, this solution is analytic and satisfies $y_\ell(r) = O(r^\ell)$ as $r\to 0$.
         
    Suppose that $y_\ell(r_0) = 0$ for some $r_0\in(0,\rho]$. Multiplying the equation by $y_\ell$ and integrating up to $r_0$ gives
    \[-\int_0^{r_0}\sn_\kappa^2(r)\left(y_\ell'(r)\right)^2dr = \ell(\ell+1)\int_0^{r_0}y_l^2dr\ge 0\]
    which forces $y_\ell\equiv0$. Therefore, $y_\ell > 0$ on $(0,\rho]$.
    
    For $j<k$, let $W = y_k'y_j-y_j'y_k$ denote the Wronskian. It satisfies
\[
\frac{d}{dr}\left[
\sn_\kappa^2(r)W(r)
\right]
=\bigl(k(k+1)-j(j+1)\bigr)y_j(r)y_k(r).
\]
Since $W(r) = O(r^{j+k-1})$, integrating from $0$ to $\rho$ and using
$y_j(\rho)=y_k(\rho)=1$ yield
\[
\sn_\kappa^2(\rho)(\mu_k-\mu_j)
=\bigl(k(k+1)-j(j+1)\bigr)
\int_0^\rho y_j(r)y_k(r)\,dr>0.
\]
Thus the first positive Steklov eigenvalue of the ball is $\mu_1$, with multiplicity $3$.

    Similarly, the Lie group $\operatorname{SO}_3(\mathbb R)\times \mathbb Z_2$ acts on $M$ isometrically by setting
    \[(A,\tau).(p,t) = (A.p, \pi/\sqrt{\kappa}-t),\]
    where $\tau$ is the nontrivial element in $\mathbb Z_2$. 
    Since 
    \[L^2(\mathbb RP^2)\simeq\widehat\bigoplus_{\ell = 0}^\infty \mathcal H_{2\ell},\]
    we have the decomposition
    \[L^2(\partial M)\simeq\widehat\bigoplus_{\ell = 0}^\infty \mathcal V_{2\ell}^+\oplus\mathcal V_{2\ell}^-,\]
    where
    \[\mathcal V_{2\ell}^\pm = \{(f,\pm f)\mid f\in\mathcal H_{2\ell}\}\subset\mathcal H_{2\ell}\oplus \mathcal H_{2\ell},\quad\dim\mathcal V_{2\ell}^\pm = 4\ell + 1,\]
    since $\partial M$ consists of two copies of $\partial\mathbb B_\kappa(\rho)/\pm\simeq\mathbb RP^2$. By Schur's lemma again, we have the operator decomposition
    \[\Lambda(M)|_{\mathcal V_{2\ell}^\pm} = \mu_{2\ell}^\pm \Id_{\mathcal V_{2\ell}^\pm}.\]
    By an argument similar to the one above, we can show that $\{\mu_{2\ell}^+\}_{\ell = 0}^\infty$ and $\{\mu_{2\ell}^-\}_{\ell = 0}^\infty$ are strictly increasing sequences.
    Therefore, since $\mu_0^+$ corresponds to the zero eigenvalue, the first nonzero Steklov eigenvalue is $\min\{\mu_2^+,\mu_0^-\}$, whose multiplicity is $1$ if $\mu_2^+ > \mu_0^-$, $5$ if $\mu_2^+ < \mu_0^-$, or $6$ if $\mu_2^+ = \mu_0^-$. However, for the ball, $\mu_1$ has multiplicity $3$. This proves the proposition.
\end{proof}

\begin{proof}[Proof of Proposition \ref{prop: spectrum determines boundary}]

    By Lemma \ref{lem: spectrum controls chi sphere}, $\Sigma$ is totally umbilical with principal curvatures equal to $1/\tn_\kappa(\rho)$ and $\Sigma$ is isometric to either $\partial\mathbb B_\kappa(\rho)$ or the disjoint union of two copies of $\partial\mathbb B_\kappa(\rho)/\pm$. To prove the desired proposition, it suffices to show that $\Sigma$ is orientable. If $\kappa > 0$, this is a consequence of Lemma \ref{lem: spectrum implies orientability} and Lemma \ref{lem: nonisopectral}.

    Now suppose that $\kappa \le 0$ and $\Sigma$ is non-orientable. Let $\Omega'$ be the Riemannian orientation double covering of $\Omega$, whose boundary is totally umbilical and is isometric to the disjoint union of two copies of $\partial\mathbb B_\kappa(\rho)$. The same argument as in the proof of Proposition \ref{prop:iso-to-geod-ball} applies to $\Omega'$. In particular, this means that $\partial\Omega'$ has only one connected component, which is a contradiction. 
\end{proof}

\appendix

\section{Proof of Proposition \ref{prop: faithful} and Proposition \ref{prop: evaluation invariance is algebraic invariance}}
\label{apd: invariance}

\begin{lemma}
    \label{lem: R is integral}
    The algebra $\mathcal R = \mathbb R[\mathcal G][\mathsf G]/(\mathsf G^2-\det\mathsf g)$ is an integral domain.
\end{lemma}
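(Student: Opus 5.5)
We need to show that $\mathcal R=\mathbb R[\mathcal G][\mathsf G]/(\mathsf G^2-\det\mathsf g)$ is an integral domain. Set $A:=\mathbb R[\mathcal G]$. After the symmetry identifications this is a polynomial ring in the independent variables $\mathsf g_{\alpha\beta,c_1\dots c_w}$ with $\alpha\le\beta$ and the multiset $\{c_1,\dots,c_w\}$, so $A$ is a UFD. Then $\mathcal R=A[\mathsf G]/(\mathsf G^2-D)$ with $D=\det\mathsf g\in A$. For a monic quadratic $\mathsf G^2-D$ over a UFD $A$ with fraction field $K$, Gauss's lemma gives: $\mathsf G^2-D$ is irreducible in $A[\mathsf G]$ if and only if it is irreducible in $K[\mathsf G]$, which happens if and only if $D$ is not a square in $K$. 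Since $A[\mathsf G]$ is a UFD, an irreducible element generates a prime ideal. So the whole proof reduces to showing that $D$ is not a square in $K$.

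To show this, I will use the fact that $D$ involves only the $w=0$ variables $\mathsf g_{\alpha\beta}$ with $\alpha\le\beta$. Suppose $D=(p/q)^2$ with $p,q\in A$ coprime. Then $Dq^2=p^2$, and unique factorization forces $D$ to be a unit times a square in $A$. The units of $A$ are the nonzero constants, and a square of a polynomial has even degree in each variable. It therefore suffices to find a variable in which $D$ has odd degree, or a specialization showing $D$ is not $c\cdot f^2$.

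The key step is a concrete specialization. Set the off-diagonal entries to zero and $\mathsf g_{\alpha\alpha}=1$ for $\alpha\ge2$. Then $D$ becomes $\mathsf g_{11}$, which has degree $1$ in $\mathsf g_{11}$. If $D=cf^2$, this specialization would give $\mathsf g_{11}=c\,\bar f^2$ in $\mathbb R[\mathsf g_{11}]$. That is impossible by a degree count: the left side has odd degree and the right side has even degree (and $\bar f\neq0$ since $\mathsf g_{11}\ne0$). This works for any $n-1\ge1$. Alternatively, one can observe directly that $D$ is linear in $\mathsf g_{11}$ with coefficient the $(n-2)$-minor, which is nonzero, so $\deg_{\mathsf g_{11}}D=1$ is odd.

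The only point needing care is the justification that $A$ really is a polynomial ring, i.e.\ that the symmetry relations just identify variables rather than impose nontrivial relations, so it is a UFD. This follows immediately from how $\mathcal G$ is defined as a set of symbols modulo identification. With that in hand, Gauss's lemma and the standard fact that irreducibles in a UFD are prime complete the argument, showing $(\mathsf G^2-D)$ is prime and hence $\mathcal R$ is a domain.
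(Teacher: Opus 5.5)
Your argument is correct and is essentially the paper's: both reduce to showing that $\det\mathsf g$ is not a square in the UFD $\mathbb R[\mathcal G]$, and both detect this by a specialization. The paper sends the off-diagonal and higher-jet variables to $0$, obtaining $\prod_\alpha\mathsf g_{\alpha\alpha}$, and uses that every irreducible factor of a square has even multiplicity; you specialize further down to $\mathsf g_{11}$ (or note $\deg_{\mathsf g_{11}}\det\mathsf g=1$) and argue by degree parity. You also spell out two steps the paper leaves implicit: the Gauss-lemma reduction (a monic $\mathsf G^2-D$ is irreducible iff $D$ is not a square), and the fact that irreducibles in the UFD $\mathbb R[\mathcal G][\mathsf G]$ generate prime ideals. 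The only cosmetic fix is that, to land in $\mathbb R[\mathsf g_{11}]$, you should also send the $w\ge 1$ variables to constants, though your degree-parity version works without this.
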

\begin{proof}
    Denote $\mathcal A = \mathbb R[\mathcal G]$. Define an endomorphism $\psi: \mathcal A\to \mathcal A$ by setting
    \[\psi(\mathsf g_{\alpha\beta,c_1\dots c_w}) = \begin{cases}
        \mathsf g_{\alpha\alpha} & \text{if $\alpha = \beta$ and $w=0$},\\
        0 & \text{otherwise.}
    \end{cases}\]
    If there exists $\mathsf f\in\mathcal A$ such that $\mathsf f^2 = \det\mathsf g$, then 
    \[\psi(\mathsf f)^2 = \psi(\det\mathsf g) = \mathsf g_{11}\dots\mathsf g_{n-1,n-1},\] 
    which is impossible since $\mathcal A$ is a unique factorization domain and the degree of an irreducible factor in a square is always even. Therefore, the polynomial $\mathsf G^2 - \det(\mathsf g)$ is irreducible in $\mathcal A[\mathsf G]$, which implies that $\mathcal R$ is an integral domain.
\end{proof}
\begin{proof}[Proof of Proposition \ref{prop: faithful}]
    First we show that if $\mathsf p$ is a nonzero polynomial in the variables $\mathcal G$, and if $\mathsf p(\phi,g) = 0$ for any Riemannian manifold $(\Omega,g)$ with an $\varepsilon$-collar and any boundary normal coordinate system $\phi$, then $\mathsf p = 0$. There exists $a_{\alpha\beta,c_1\dots c_w}\in\mathbb R$ for any $1\le \alpha,\beta\le n-1,$ $1\le c_1,\dots,c_w\le n,w\in\mathbb N_0$ such that the matrix $(a_{\alpha\beta})$ is positive definite and the evaluation of $\mathsf p$ at $\mathsf g_{\alpha\beta,c_1\dots c_w}= a_{\alpha\beta,c_1\dots c_w}$
    is nonzero, as $\mathsf p$ is a nonzero polynomial. By Borel's lemma, there exists a metric $g$ on $\mathbb R^{n-1}\times [0,\varepsilon)$ such that
    \[g_{\alpha\beta,c_1\dots c_w}(0,\dots,0) = a_{\alpha\beta,c_1\dots c_w}.\]
    It then follows that $\mathsf p(\phi,g)$ is nonzero at the origin, which is a contradiction.

    Next we suppose that $\mathsf p\in\mathcal P$ satisfies the stated assumption. Multiplying by a sufficiently large power of $\mathsf G$, we may assume that $\mathsf p\in\mathcal R$. We can write $\mathsf p = \mathsf u + \mathsf G\mathsf v$, where $\mathsf u,\mathsf v\in\mathbb R[\mathcal G]$. If we set $\mathsf q := (\mathsf u + \mathsf G\mathsf v)(\mathsf u - \mathsf G\mathsf v) = \mathsf u^2 - \mathsf G^2\mathsf v^2$, then $\mathsf q$ is a polynomial in the variables $\mathcal G$ and satisfies $\mathsf q(\phi,g) = 0$ for any Riemannian manifold with an $\varepsilon$-collar, and any boundary normal coordinates $\phi$. By the above discussion, $\mathsf q = 0$. However, it follows from Lemma \ref{lem: R is integral} and the assumption $\mathsf u + \mathsf G\mathsf v\neq0$ that $\mathsf u - \mathsf G\mathsf v$ = 0. As a result, the evaluations of $\mathsf u = \frac{\mathsf u + \mathsf G\mathsf v}{2} + \frac{\mathsf u - \mathsf G\mathsf v}{2}$ and $\mathsf v = \frac{\mathsf u + \mathsf G\mathsf v}{2\mathsf G} + \frac{\mathsf u - \mathsf G\mathsf v}{2\mathsf G}$ are always zero. Therefore, $\mathsf u = \mathsf v = 0$ by the above discussion again. Hence, $\mathsf p = 0$.
\end{proof}

To prove Proposition \ref{prop: evaluation invariance is algebraic invariance}, we introduce the following notation. Let $(\Omega,g)$ be a Riemannian manifold with smooth boundary $\Sigma$ and an $\varepsilon$-collar. The $\varepsilon$-collar neighborhood is diffeomorphic to $\Sigma\times[0,\varepsilon)$. Under this identification, the Riemannian metric $g$ takes the form
\[g = g_{\alpha\beta}(q,x^n)dx^\alpha\otimes dx^\beta + dx^n\otimes dx^n.\]
Let $\Sigma_t$ denote the Riemannian submanifold $\{x^n = t\}$ of $\Sigma\times[0,\varepsilon)$, and let $F(\Omega)$ be the principal bundle over $\Sigma\times[0,\varepsilon)$, whose restriction to $\Sigma_t$ is the orthonormal frame bundle of $\Sigma_t$. Given $\mathrm e \in F(\Omega)_{(q,t)}$, there is a unique geodesic normal coordinate system on $\Sigma_t$ associated to $\mathrm e$, denoted by  $\phi_{\mathrm e}$. Note that as a smooth manifold $\Sigma_t$ is diffeomorphic to $\Sigma$. Our definition of the invariance for $\mathsf p\in\mathcal P$ now translates into: for any Riemannian manifold with boundary $\Omega$ as above and any $(q,t)\in\Sigma\times[0,\varepsilon)$, the value $\mathsf p(\phi_\mathrm e,g)(q,t)$ is independent of $e\in F(\Omega)_{(q,t)}$.

\begin{proof}[Proof of Proposition \ref{prop: evaluation invariance is algebraic invariance}]
    Suppose that $\mathsf p\in\mathcal P$ satisfies $\mathsf p - A.\mathsf p\in\mathcal J$ for all $A\in\Ogroup{n-1}$. Given a Riemannian manifold $(\Omega,g)$ with an $\varepsilon$-collar, for any $\mathrm e\in F(\Omega)_{(q,t)}$, one has $\mathsf p(\phi_{\mathrm e.A},g)(q,t) = A.\mathsf p(\phi_\mathrm e,g)(q,t) = \mathsf p(\phi_\mathrm e,g)(q,t)$ since $\mathsf r(\phi_\mathrm e,g)(q,t) = 0$ for any $\mathsf r\in\mathcal J$.
    
    Conversely, let $\mathsf p\in\mathcal P$ be invariant and $A\in\Ogroup{n-1}$. We show that $\mathsf q:= \mathsf p-A.\mathsf p\in\mathcal J$. Note that $\mathsf q$ is also invariant because $A.\mathsf p(\phi_\mathrm e,g)(q,t) = \mathsf p(\phi_{\mathrm e.A},g)(q,t)$, for any $\mathrm e\in F(\Omega)_{(q,t)}$. Moreover, $\mathsf q(g) = 0$ for every Riemannian manifold $(\Omega,g)$ with an $\varepsilon$-collar. By the definition, we have
    \begin{align*}
        \mathcal O = \mathcal P/\mathcal J & \simeq \left(\frac{\mathcal R}{(\mathsf G-1,\mathsf g_{\alpha\beta}-\delta_{\alpha\beta},\mathsf g_{\alpha\beta,\gamma})}\right)_\mathsf G\\
    	& \simeq\left(\frac{\mathbb R[\mathcal G,\mathsf G]}{(\mathsf G-1,\mathsf g_{\alpha\beta}-\delta_{\alpha\beta},\mathsf g_{\alpha\beta,\gamma})}\right)_\mathsf G\\
    	& \simeq\mathbb R[\mathcal G'],
    \end{align*} 
    where $\mathcal G' = \mathcal G\setminus\{\mathsf g_{\alpha\beta},\mathsf g_{\alpha\beta,\gamma}\mid 1\le\alpha,\beta,\gamma\le n-1\}$. Therefore, we may view the image $\bar{\mathsf q}$ of $\mathsf q$ in $\mathcal O$ as a polynomial in the variables in $\mathcal G'$. Then a similar argument as in the proof of Proposition \ref{prop: faithful} shows that $\bar{\mathsf q} = 0$.
\end{proof}

\begin{proof}[Proof of Lemma \ref{lem: independent of phi}]
    Since $\mathsf p(\phi,g)(q,t)$ is independent of the choice of $\phi$, it is always zero since $\mathsf p\in\mathcal J$ and we can choose $\phi$ to be $\phi_\mathrm e$ for some $\mathrm e\in F(\Omega)_{(q,t)}$. Therefore $\mathsf p = 0$ by Proposition \ref{prop: faithful}.
\end{proof}

\section{Computation of derivatives}
\label{apd: computation}

\begin{proof}[Proof of Formulas \eqref{eq: g (2,0)}--\eqref{eq: g (2,1)}]
    Formula \eqref{eq: g (2,0)} follows from the Taylor expansion of the metric tensor at the origin of a geodesic normal coordinate system, that is, 
    \[g_{\alpha\beta}(x) = \delta_{\alpha\beta} + \frac{1}{3}\Rm_{\alpha\gamma\delta\beta}^\top x^\delta x^\gamma + O(|x|^3).\] 

    We prove the remaining identities.  
    We start with the congruences
    \[
        \mathsf g_{\alpha\beta}\equiv\delta_{\alpha\beta},
        \qquad \mathsf g_{\alpha\beta,\gamma}\equiv0\mod\mathcal J.
    \]
    Since $\mathsf g_{nn}=1$ and $\mathsf g_{\alpha n}=0$, the only Christoffel symbols involving the normal direction that are needed below are
    \[
        \mathsf \Gamma_{\alpha\beta}^{n} = \mathsf h_{\alpha\beta},
        \qquad
        \mathsf \Gamma_{\alpha n}^{\beta} = -\mathsf g^{\beta\gamma}\mathsf h_{\alpha\gamma},
        \qquad
        \mathsf \Gamma_{nn}^{a} = \mathsf \Gamma_{\alpha n}^{n}=0.
    \]

    First, by the definition of the curvature tensor adopted above,
    \begin{align*}
        \mathsf{Rm}^\bot_{\alpha\beta}
        =\mathsf{Rm}_{\alpha n\beta n}
        &=\partial_n\mathsf\Gamma_{\alpha\beta}^{n}
          -\mathsf\Gamma_{n\beta}^{\gamma}\mathsf\Gamma_{\alpha\gamma}^{n}\\
        &=\partial_n\mathsf h_{\alpha\beta}
          +\mathsf g^{\gamma\delta}\mathsf h_{\alpha\gamma}\mathsf h_{\beta\delta}.
    \end{align*}
    Because $\mathsf h_{\alpha\beta}=-\frac12 \mathsf g_{\alpha\beta,n}$, this is equivalent to
    \[
        \mathsf g_{\alpha\beta,nn}
        =2\bigl(\mathsf g^{\gamma\delta}\mathsf h_{\alpha\gamma}\mathsf h_{\beta\delta}
        -\mathsf{Rm}^\bot_{\alpha\beta}\bigr),
    \]
    which proves \eqref{eq: g (0,2)}.

    To differentiate this identity once more, put
    \[
        \mathsf S_{\alpha\beta}:=\mathsf g^{\gamma\delta}\mathsf h_{\alpha\gamma}\mathsf h_{\beta\delta}.
    \]
    The identities
    \[
        \partial_n \mathsf g^{\gamma\delta}=2\mathsf h^{\gamma\delta},
        \qquad
        \partial_n \mathsf h_{\alpha\beta}=\mathsf{Rm}^\bot_{\alpha\beta}-\mathsf S_{\alpha\beta}
    \]
    imply, after cancellation of the cubic terms in $h$, that
    \begin{equation}\label{eq: normal derivative h square}
        \partial_n \mathsf S_{\alpha\beta}
        =\mathsf g^{\gamma\delta}\bigl(
        \mathsf h_{\alpha\gamma}\mathsf {Rm}^\bot_{\beta\delta}
        +\mathsf h_{\beta\gamma}\mathsf{Rm}^\bot_{\alpha\delta}\bigr).
    \end{equation}
    Moreover, since $\mathsf \Gamma_{n\alpha}^{\gamma}=-\mathsf g^{\gamma\delta}\mathsf h_{\alpha\delta}$ and $\mathsf \Gamma_{nn}^{a}=0$, the covariant normal derivative of $\mathsf {Rm}^\bot$ satisfies
    \begin{equation}\label{eq: partial versus covariant normal Rm bot}
        \mathsf {Rm}^\bot_{\alpha\beta;n}
        =\partial_n\mathsf{Rm}^\bot_{\alpha\beta}
        +\mathsf g^{\gamma\delta}\mathsf h_{\alpha\gamma}\mathsf {Rm}^\bot_{\beta\delta}
        +\mathsf g^{\gamma\delta}\mathsf h_{\beta\gamma}\mathsf{Rm}^\bot_{\alpha\delta}.
    \end{equation}
    Differentiating \eqref{eq: g (0,2)} and using
    \eqref{eq: normal derivative h square}--\eqref{eq: partial versus covariant normal Rm bot} yields
    \[
        \mathsf g_{\alpha\beta,nnn}
        =4\mathsf g^{\gamma\delta}\bigl(
        \mathsf h_{\alpha\gamma}\mathsf {Rm}^\bot_{\beta\delta}
        +\mathsf h_{\beta\gamma}\mathsf {Rm}^\bot_{\alpha\delta}\bigr)
        -2\mathsf {Rm}^\bot_{\alpha\beta;n},
    \]
    proving \eqref{eq: g (0,3)}.

    It remains to prove the mixed tangential--normal identity.  Differentiating the coordinate formula for the tangential Christoffel symbols and using \eqref{eq: g (2,0)}, together with the usual symmetries of $\mathsf{Rm}^\top$, gives
    \begin{align}
        \partial_\delta(\mathsf \Gamma^\top)_{\gamma\alpha}^{\epsilon}
        &\equiv \frac12\bigl(
        \mathsf g_{\epsilon\alpha,\gamma\delta}
        +\mathsf g_{\gamma\epsilon,\alpha\delta}
        -\mathsf g_{\gamma\alpha,\epsilon\delta}\bigr)\notag\\
        &\equiv \frac13\bigl(
        \mathsf {Rm}^\top_{\gamma\delta\alpha\epsilon}
        +\mathsf {Rm}^\top_{\alpha\delta\gamma\epsilon}\bigr)
        \equiv-\mathsf g_{\gamma\alpha,\epsilon\delta}\mod\mathcal J.
        \label{eq: derivative tangential Christoffel}
    \end{align}
    On the other hand, the definition of the second tangential covariant derivative of the $(0,2)$-tensor $h$ gives
    \begin{align*}
        \mathsf h_{\alpha\beta:\gamma\delta}
        &\equiv \partial_\delta\partial_\gamma \mathsf h_{\alpha\beta}
        -\partial_\delta(\mathsf \Gamma^\top)_{\gamma\alpha}^{\epsilon}\mathsf h_{\epsilon\beta}
        -\partial_\delta(\mathsf \Gamma^\top)_{\gamma\beta}^{\epsilon}\mathsf h_{\alpha\epsilon}.
    \end{align*}
    Substituting \eqref{eq: derivative tangential Christoffel} and using
    $\mathsf g_{\alpha\beta,n}=-2\mathsf h_{\alpha\beta}$, we obtain
    \begin{align*}
        \mathsf g_{\alpha\beta,\gamma\delta n}
        &\equiv-2\mathsf h_{\alpha\beta:\gamma\delta}
        +2\mathsf h_{\beta\epsilon}\mathsf g_{\gamma\alpha,\epsilon\delta}
        +2\mathsf h_{\alpha\epsilon}\mathsf g_{\gamma\beta,\epsilon\delta},
    \end{align*}
    which is \eqref{eq: g (2,1)}.  
\end{proof}

In Section \ref{sec: proof of thm}, we used formulas for the derivatives of some specific metric polynomials without proof; we prove them here. It follows from the Gauss equation that 
\[\mathsf R^\top - \mathsf R = 2(\mathsf H_2 - \mathsf{Ric}^\bot).\]
Thus it suffices to compute $\partial_n\mathsf H_1$, $\partial_n\mathsf H_2$, $\partial_n\mathsf\Delta^\top\mathsf H_1$ and $\partial_n\langle\mathsf h, \mathsf{Rm}^\bot\rangle$, as the others can be derived from these using the Leibniz rule. Recall that by definition, $\mathsf h_{\alpha\beta} = -\frac{1}{2}\mathsf g_{\alpha\beta,n}$, $\mathsf H_1 = \frac{1}{2}\mathsf g^{\alpha\beta}\mathsf h_{\alpha\beta}$, $\mathsf H_2 = \mathsf G^{-2}\det\mathsf(h_{\alpha\beta})$, where $\det\mathsf(h_{\alpha\beta}) = \mathsf h_{11}\mathsf h_{22} - \mathsf h_{12}^2$, $\mathsf{\Delta^\top H_1} = \mathsf g^{\alpha\beta}(\mathrm{Hess}^\top \mathsf H_1)_{\alpha\beta}$, $\langle\mathsf h, \mathsf{Rm}^\bot\rangle = \mathsf g^{\alpha\gamma}\mathsf g^{\beta\delta}\mathsf h_{\alpha\beta}\mathsf{Rm}_{n\gamma n\delta}$. In what follows, all repeated indices are summed over.

\begin{lemma}\label{lem: partial n}
    We have
    \begin{enumerate}
        \item $\partial_n\mathsf H_1 = 2\mathsf H_1^2 - \mathsf H_2 + \frac{1}{2}\mathsf{Ric}^\bot$;
        \item $\partial_n\mathsf H_2 = 2\mathsf H_1\mathsf H_2 + 2\mathsf H_1\mathsf{Ric}^\bot - \langle \mathsf h, \mathsf{Rm}^\bot\rangle$;
        \item $\partial_n\langle\mathsf h, \mathsf{Rm}^\bot\rangle = \langle\mathsf h^2,\mathsf{Rm}^\bot\rangle + \langle\mathsf{Rm}^\bot,\mathsf{Rm}^\bot\rangle + \langle\mathsf h, \nabla_n\mathsf{Rm}^\bot\rangle$;
        \item $\partial_n\mathsf{\Delta^\top H_1} = 2\langle \mathsf h, \mathrm{Hess}^\top\mathsf H_1\rangle + 4\mathsf H_1\mathsf{\Delta^\top H_1} + 2\langle\mathsf{\nabla^\top H_1},\mathsf{\nabla^\top H_1}\rangle + 2\langle\dvg^\top\mathsf h, \nabla^\top\mathsf H_1\rangle - \mathsf{\Delta^\top H_2} + \frac{1}{2}\mathsf{\Delta^\top \Ric^\bot}$.
    \end{enumerate}
\end{lemma}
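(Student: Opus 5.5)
We verify all four identities in $\mathcal P$ by working modulo $\mathcal J$ and then lifting. Both sides of each identity are invariant polynomials: the right-hand sides by Section~\ref{subsec: exp of inv poly}, and the left-hand sides because $\partial_n$ of an invariant is evaluated along the family $g_{x^n}$ in boundary normal coordinates, so by Remark~\ref{rmk: strong invariance} its value is independent of the boundary coordinates. Hence, by Proposition~\ref{prop: faithful} and Lemma~\ref{lem: independent of phi}, it suffices to check each identity at a point in coordinates where $\mathsf g_{\alpha\beta}=\delta_{\alpha\beta}$ and $\mathsf g_{\alpha\beta,\gamma}=0$. The ingredients are those already used in Appendix~\ref{apd: computation}:
\[
\partial_n\mathsf g^{\alpha\beta}=2\mathsf h^{\alpha\beta},\qquad
\partial_n\mathsf h_{\alpha\beta}=\mathsf{Rm}^\bot_{\alpha\beta}-\mathsf h^2_{\alpha\beta},\qquad
\partial_n\mathsf G=-2\mathsf H_1\mathsf G,
\]
together with \eqref{eq: partial versus covariant normal Rm bot}.

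For (1), differentiate $\mathsf H_1=\frac12\mathsf g^{\alpha\beta}\mathsf h_{\alpha\beta}$. This gives $|h|^2+\frac12\Ric^\bot-\frac12|h|^2=\frac12|h|^2+\frac12\Ric^\bot$. In dimension two, $|h|^2=4\mathsf H_1^2-2\mathsf H_2$, which yields (1).

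For (2), write $\mathsf H_2=\mathsf G^{-2}\det\mathsf h$ and use Jacobi's formula, $\partial_n\det\mathsf h=\det\mathsf h\,\mathrm{tr}(\mathsf h^{-1}\partial_n\mathsf h)$. To avoid inverting $\mathsf h$, use the $2\times2$ identity $\mathrm{adj}(\mathsf h)=2\mathsf H_1\mathsf g-\mathsf h$ instead. Then $\mathrm{tr}(\mathrm{adj}\,\mathsf h\cdot(\mathsf{Rm}^\bot-\mathsf h^2))=2\mathsf H_1\Ric^\bot-\langle\mathsf h,\mathsf{Rm}^\bot\rangle-2\mathsf H_1|h|^2+\mathrm{tr}\,\mathsf h^3$. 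By Cayley--Hamilton, $\mathrm{tr}\,\mathsf h^3=2\mathsf H_1|h|^2-2\mathsf H_1\mathsf H_2$. Adding the factor $-2\mathsf G^{-3}\partial_n\mathsf G\det\mathsf h=4\mathsf H_1\mathsf H_2$ and simplifying gives (2).

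For (3), differentiate $\mathsf g^{\alpha\gamma}\mathsf g^{\beta\delta}\mathsf h_{\alpha\beta}\mathsf{Rm}^\bot_{\gamma\delta}$. The two inverse-metric derivatives contribute $4\langle\mathsf h^2,\mathsf{Rm}^\bot\rangle$. The $\partial_n\mathsf h$ term contributes $\langle\mathsf{Rm}^\bot,\mathsf{Rm}^\bot\rangle-\langle\mathsf h^2,\mathsf{Rm}^\bot\rangle$. Converting $\partial_n\mathsf{Rm}^\bot$ to $\mathsf{Rm}^\bot_{;n}$ via \eqref{eq: partial versus covariant normal Rm bot} contributes $-2\langle\mathsf h^2,\mathsf{Rm}^\bot\rangle$. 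These sum to (3).

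Item (4) is the main obstacle, because $\partial_n$ does not commute with the tangential Hessian: the tangential Christoffel symbols depend on $x^n$. The commutator is
\[
\partial_n(\Gamma^\top)^\epsilon_{\alpha\beta}=-(\mathsf h^\epsilon{}_{\alpha:\beta}+\mathsf h^\epsilon{}_{\beta:\alpha}-\mathsf h_{\alpha\beta:}{}^\epsilon),
\]
which is the standard first variation of the Levi-Civita connection under $\dot g=-2h$. With this, write
\[
\Delta^\top\mathsf H_1=\mathsf g^{\alpha\beta}\bigl(\partial_\alpha\partial_\beta\mathsf H_1-(\Gamma^\top)^\epsilon_{\alpha\beta}\partial_\epsilon\mathsf H_1\bigr).
\]
Then $\partial_n\Delta^\top\mathsf H_1=2\langle\mathsf h,\Hess^\top\mathsf H_1\rangle+\Delta^\top(\partial_n\mathsf H_1)+\mathsf g^{\alpha\beta}(2\mathsf h^\epsilon{}_{\alpha:\beta}-\mathsf h_{\alpha\beta:}{}^\epsilon)\partial_\epsilon\mathsf H_1$. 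The last term equals $2\langle\dvg^\top\mathsf h,\nabla^\top\mathsf H_1\rangle-2|\nabla^\top\mathsf H_1|^2$, since $\mathrm{tr}\,\mathsf h=2\mathsf H_1$. Substituting (1) gives $\Delta^\top(2\mathsf H_1^2-\mathsf H_2+\frac12\Ric^\bot)=4\mathsf H_1\Delta^\top\mathsf H_1+4|\nabla^\top\mathsf H_1|^2-\Delta^\top\mathsf H_2+\frac12\Delta^\top\Ric^\bot$. Collecting terms yields exactly (4). The point to double-check is the sign of the Christoffel variation; I would verify it on the warped product $g_u$ of Section~\ref{sec: proof of thm} as a test case.
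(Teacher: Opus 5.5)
Your proof is correct and follows essentially the paper's route: reduce modulo $\mathcal J$ using $\partial_n\mathsf g^{\alpha\beta}=2\mathsf h^{\alpha\beta}$, $\partial_n\mathsf h=\mathsf{Rm}^\bot-\mathsf h^2$, $\partial_n\mathsf G=-2\mathsf H_1\mathsf G$ and \eqref{eq: partial versus covariant normal Rm bot}, then lift with Lemma~\ref{lem: independent of phi}. The differences are only cosmetic: in (2) you use Jacobi's formula plus Cayley--Hamilton, and in (4) you use the Christoffel form of $\Delta^\top$ with the first variation of $\Gamma^\top$, where the paper splits $\mathsf G^{-1}\partial_\alpha(\mathsf G\mathsf g^{\alpha\beta}\partial_\beta\mathsf H_1)$ into $\mathsf p_1+\mathsf p_2+\mathsf p_3$.

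Your proposed sign check on $g_u$ would be vacuous, because $\nabla^\top\mathsf h=0$ and $\nabla^\top\mathsf H_1=0$ there. The sign is nonetheless right: modulo $\mathcal J$ one has $\partial_n\Gamma^{\top\,\epsilon}_{\alpha\beta}\equiv\frac12(\mathsf g_{\epsilon\beta,\alpha n}+\mathsf g_{\alpha\epsilon,\beta n}-\mathsf g_{\alpha\beta,\epsilon n})$, and $\mathsf g_{\sigma\beta,\alpha n}=-2\partial_\alpha\mathsf h_{\sigma\beta}\equiv-2\mathsf h_{\sigma\beta:\alpha}$.
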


\begin{proof}
    Since $\mathsf g^{\alpha\beta}\mathsf g_{\beta\gamma} = \delta_{\alpha\gamma}$, we have $\partial_c\mathsf g^{\alpha\beta} \equiv -\mathsf g_{\alpha\beta,c}$ for any $c$. Using \eqref{eq: g (0,2)}, we see
    \begin{align*}
        \partial_n\mathsf H_1
        & = \frac{1}{2}\partial_n\mathsf g^{\alpha\beta}\mathsf h_{\alpha\beta} + \frac{1}{2}\mathsf g^{\alpha\beta}\partial_n\mathsf h_{\alpha\beta}\\
        & \equiv -\frac{1}{2}\mathsf g_{\alpha\beta,n}\mathsf h_{\alpha\beta} - \frac{1}{2}\mathsf g^{\alpha\beta}(\mathsf g^{\gamma\delta}\mathsf h_{\alpha\gamma}\mathsf h_{\beta\delta} - \mathsf{Ric}_{\alpha\beta}^\bot)\\
        & \equiv \mathsf h_{\alpha\beta}\mathsf h_{\alpha\beta} - \frac{1}{2}\mathsf h_{\beta\gamma}\mathsf h_{\beta\gamma} +  \frac{1}{2}\mathsf{Ric}^\bot\\
        & \equiv \frac{1}{2}\langle\mathsf h,\mathsf h\rangle + \frac{1}{2}\mathsf{Ric}^\bot.
    \end{align*}
    Note that 
    \[4\mathsf H_1^2-2\mathsf H_2\equiv(\mathsf h_{11} + \mathsf h_{22})^2 - 2(\mathsf h_{11}\mathsf h_{22} - \mathsf h_{12}^2) = \mathsf h_{11}^2 + \mathsf h_{22}^2 + 2\mathsf h_{12}^2\equiv\langle\mathsf h,\mathsf h\rangle.\]
    Then (1) follows from Lemma \ref{lem: independent of phi}. It also follows that
    \[\partial_n(\mathsf h_{11}\mathsf h_{22} - \mathsf h_{12}^2) \equiv - 2\mathsf H_1\mathsf H_2 + 2\mathsf H_1\mathsf{Ric}^\bot - \langle\mathsf h,\mathsf{Rm}^\bot\rangle.\]
    Since
    \[\partial_n\mathsf G \equiv \frac{1}{2}(\mathsf g_{11,n} + \mathsf g_{22,n}) \equiv - 2\mathsf H_1,\]
    we have
    \begin{align*}
        \partial_n\mathsf H_2
        & = \partial_n(\mathsf G^{ - 2}\det(\mathsf h_{\alpha\beta}))\\
        & \equiv - 2\partial_n\mathsf G\det(\mathsf h_{\alpha\beta}) + \partial_n\det(\mathsf h_{\alpha\beta})\\
        & \equiv 2\mathsf H_1\mathsf H_2 + 2\mathsf H_1\mathsf{Ric}^\bot - \langle\mathsf h, \mathsf{Rm}^\bot\rangle.
    \end{align*}
    By Lemma \ref{lem: independent of phi}, this shows (2).

    Note that for any $\alpha$, $\beta$, the Christoffel symbol $\mathsf\Gamma_{\alpha n}^\beta$ can be computed as
    \[\mathsf\Gamma_{\alpha n}^\beta = \frac{1}{2}\mathsf g^{\beta\gamma}(\mathsf g_{\gamma n,\alpha} + \mathsf g_{\alpha\gamma,n} - \mathsf g_{\alpha n,\gamma})\equiv \frac{1}{2}\mathsf g_{\alpha\beta,n}\equiv-\mathsf h_{\alpha\beta}.\]
    Therefore, 
    \begin{align*}
        \partial_n\langle\mathsf h, \mathsf{Rm}^\bot\rangle
        & = \partial_n(\mathsf g^{\alpha\gamma}\mathsf g^{\beta\delta}\mathsf h_{\alpha\beta}\mathsf{Rm}_{n\gamma n\delta})\\
        & = \partial_n\mathsf g^{\alpha\gamma}\mathsf g^{\beta\delta}\mathsf h_{\alpha\beta}\mathsf{Rm}_{n\gamma n\delta} + \mathsf g^{\alpha\gamma}\partial_n\mathsf g^{\beta\delta}\mathsf h_{\alpha\beta}\mathsf{Rm}_{n\gamma n\delta} + \mathsf g^{\alpha\gamma}\mathsf g^{\beta\delta}\partial_n\mathsf h_{\alpha\beta}\mathsf{Rm}_{n\gamma n\delta}\\
        &\qquad  + \mathsf g^{\alpha\gamma}\mathsf g^{\beta\delta}\mathsf h_{\alpha\beta}\partial_n\mathsf{Rm}_{n\gamma n\delta}\\
        &\equiv 2\mathsf h_{\alpha\gamma}\mathsf h_{\alpha\beta}\mathsf{Rm}_{n\gamma n\beta} + 2\mathsf h_{\beta\delta}\mathsf h_{\alpha\beta}\mathsf{Rm}_{n\beta n\delta} - (\mathsf h_{\alpha\gamma}\mathsf h_{\beta\gamma} - \mathsf{Rm}_{n\alpha n\beta})\mathsf{Rm}_{n\alpha n\beta}\\
        &\qquad  + \mathsf h_{\alpha\beta}(\mathsf{Rm}_{n\alpha n\beta;n} + \mathsf\Gamma_{\alpha n}^{\sigma}\mathsf{Rm}_{n\sigma n\beta} + \mathsf\Gamma_{\beta n}^{\sigma}\mathsf{Rm}_{n\alpha n\sigma})\\
        & \equiv \langle\mathsf h^2,\mathsf{Rm}^\bot\rangle + \langle\mathsf{Rm}^\bot,\mathsf{Rm}^\bot\rangle + \langle\mathsf h, \nabla_n\mathsf{Rm}^\bot\rangle.
    \end{align*}
    By Lemma \ref{lem: independent of phi}, this proves (3).

    Using the expression of the Laplacian in local coordinates, we have
    \[\mathsf{\Delta^\top H_1} = \mathsf G^{ - 1}\partial_\alpha\mathsf G\mathsf g^{\alpha\beta}\partial_\beta\mathsf H_1 + \partial_\alpha\mathsf g^{\alpha\beta}\partial_\beta\mathsf H_1 + \mathsf g^{\alpha\beta} \partial_\alpha\partial_\beta\mathsf H_1 := \mathsf p_1 + \mathsf p_2 + \mathsf p_3.\]
    
    For $\mathsf p_1$, it follows that
    \begin{align*}
        \partial_n\mathsf p_1 
        & = \partial_n\partial_\alpha\mathsf G(\mathsf G^{-1}\mathsf g^{\alpha\beta}\partial_\beta\mathsf H_1) + \partial_\alpha\mathsf G\partial_n(\mathsf G^{-1}\mathsf g^{\alpha\beta}\partial_\beta\mathsf H_1)\\
        & \equiv \partial_\alpha\partial_n\mathsf G\partial_\alpha\mathsf H_1\\
        & = -2\nabla^\top_\alpha\mathsf H_1\nabla^\top_\alpha\mathsf H_1\\
        & \equiv -2\langle\mathsf{\nabla^\top H_1},\mathsf{\nabla^\top H_1}\rangle,
    \end{align*}
    since $\partial_\gamma\mathsf p \equiv \nabla_{\gamma}\mathsf p$ for any $\mathsf p\in\mathcal P$.

    For $\mathsf p_2$, it follows that
    \begin{align*}
        \partial_n\mathsf p_2 
        & = \partial_n\partial_\alpha\mathsf g^{\alpha\beta}\partial_\beta\mathsf H_1 + \partial_\alpha\mathsf g^{\alpha\beta}\partial_n\partial_\beta\mathsf H_1\\
        & \equiv \partial_\alpha\partial_n\mathsf g^{\alpha\beta}\partial_\beta\mathsf H_1\\
        & \equiv 2\mathsf h_{\alpha\beta:\alpha}\nabla^\top_\beta\mathsf H_1\\
        & \equiv 2\langle\dvg^\top\mathsf h, \nabla^\top\mathsf H_1\rangle. 
    \end{align*}

    For $\mathsf p_3$, since $\partial_\alpha\partial_\beta\mathsf p\equiv (\mathrm{Hess}^\top\mathsf p)_{\alpha\beta}$ for any $\mathsf p\in\mathcal P$, it follows that
    \begin{align*}
        \partial_n\mathsf p_3 
        & = \partial_n\mathsf g^{\alpha\beta}\partial_\alpha\partial_\beta\mathsf H_1 + \mathsf g^{\alpha\beta}\partial_n\partial_\alpha\partial_\beta\mathsf H_1\\
        & \equiv 2\mathsf h_{\alpha\beta}\partial_\alpha\partial_\beta\mathsf H_1 + \partial_\alpha\partial_\alpha\partial_n\mathsf H_1\\
        & \equiv 2\langle \mathsf h, \mathrm{Hess}^\top\mathsf H_1\rangle + \partial_\alpha\partial_\alpha(2\mathsf H_1^2 - \mathsf H_2 + \frac{1}{2}\mathsf{Ric}^\bot)\\
        & = 2\langle \mathsf h, \mathrm{Hess}^\top\mathsf H_1\rangle + 4\mathsf H_1\mathsf{\Delta^\top H_1} + 4\langle\mathsf{\nabla^\top H_1},\mathsf{\nabla^\top H_1}\rangle - \mathsf{\Delta^\top H_2} + \frac{1}{2}\mathsf{\Delta^\top \Ric^\bot}.
    \end{align*}
    Therefore, using Lemma \ref{lem: independent of phi} again, we obtain (4). This completes the proof.
\end{proof}

\bibliographystyle{alpha}

\end{document}